\documentclass[a4paper,reqno]{amsart}
\usepackage{geometry}
\usepackage{mathrsfs}
\usepackage{syntonly}
\usepackage{amsmath}
\usepackage{amsthm}
\usepackage{amsfonts}
\usepackage{amssymb}
\usepackage{latexsym}
\usepackage{amscd,amssymb,amsopn,amsmath,amsthm,graphics,amsfonts,mathrsfs,accents,enumerate,verbatim,calc}
\usepackage[dvips]{graphicx}
\usepackage[colorlinks=true,linkcolor=blue,citecolor=blue]{hyperref}
\input xy
\xyoption{all}

\usepackage{setspace}

\usepackage{color}

\numberwithin{equation}{section}

\theoremstyle{plain}

\newtheorem{thm}{Theorem}[section]
\newtheorem{cor}[thm]{Corollary}
\newtheorem{lem}[thm]{Lemma}
\newtheorem{prop}[thm]{Proposition}

\newtheorem{defn}[thm]{Definition}
\newtheorem{exm}[thm]{Example}

\newtheorem{rem}[thm]{Remark}

\newdir{ >}{{}*!/-5pt/\dir{>}}

\newcommand{\Hom}{\operatorname{Hom}\nolimits}

\newcommand{\op}{\operatorname{op}\nolimits}

\renewcommand{\mod}{\mathsf{mod}\hspace{.01in}}

\newcommand{\M}{\mathcal M}

\newcommand{\W}{\mathcal W}
\newcommand{\h}{\mathcal H}

\newcommand{\T}{\mathcal T}

\newcommand{\I}{\mathcal I}
\newcommand{\D}{\mathscr D}

\newcommand{\X}{\mathscr X}

\newcommand{\C}{\mathscr C}

\newcommand{\oT}{\overline{\T}}

\newcommand{\E}{\mathcal E}
\newcommand{\EE}{\mathbb E}

\newcommand{\svecv}[2]{\left(\begin{smallmatrix}
      #1 \\
      #2
    \end{smallmatrix}\right)}

\newcommand{\svech}[2]{\left(\begin{smallmatrix}
      #1 & #2
\end{smallmatrix}\right)}

\renewcommand{\emph}{\textit}
\renewcommand{\phi}{\varphi}

\newcommand{\add}{\mathsf{add}\hspace{.01in}}

\def\bfJ{\pmb{J}}

\def\catE{\mathcal{E}}
\def\catW{\mathcal{W}}
\def\modcat{\mathsf{mod}}
\def\obj{\mathrm{obj}}
\def\bbF{\mathbb{F}}
\newcommand{\kk}{\Bbbk}
\newcommand{\Q}{\mathcal{Q}} 
\def\I{\mathcal{I}}
\providecommand{\To}[1]{}
\renewcommand{\To}[1]{\mathop{-\!\!\!-\!\!\!\longrightarrow}\limits^{#1}}

\providecommand{\oT}[1]{}
\renewcommand{\oT}[1]{\mathop{\longleftarrow\!\!\!-\!\!\!-}\limits^{#1}}
\def\FF{\mathbb{F}}

\usepackage{tikz}
\usepackage{pgfplots}
\usepackage{tikz-3dplot}
\usetikzlibrary{patterns}
\usetikzlibrary{3d,calc}

\def\emb{\mathrm{emb}}
\def\epi{\mathrm{epi}}
\newcommand{\ind}{\mathsf{ind}}
\def\NN{\mathbb{N}}

\newcommand{\defines}[1]{{\it\color{blue}#1}}
\newcommand{\checkLYZ}[1]{{\it\color{red}#1}}

\begin{document}

\title{Quasi-abelian quotients in extriangulated categories}\footnote{\hspace{1em}
Yu-Zhe Liu is supported by the National Natural Science Foundation of China (Grant
No. 12401042) and Guizhou Provincial Basic Research Program (Natural Science) (Grant Nos. ZD[2025]085 and ZK[2024]YiBan066). Panyue Zhou is supported by the National Natural Science Foundation of China (Grant No. 12371034) and by the Scientific Research Fund of Hunan Provincial Education Department (Grant No. 24A0221).}

\author{Yu Liu, Yu-Zhe Liu and Panyue Zhou}

\address{ School of Mathematics and Statistics, Shaanxi Normal University, 710062 Xi'an, Shaanxi, P. R. China}
\email{recursive08@hotmail.com}

\address{School of Mathematics and Statistics, Guizhou University, 550025 Guiyang, Guizhou, P. R. China}
\email{liuyz@gzu.edu.cn}

\address{School of Mathematics and Statistics, Changsha University of Science and Technology, 410114 Changsha, Hunan, P. R. China}
\email{panyuezhou@163.com}


\begin{abstract}
Let $(\mathcal{E}, \mathbb{E}, \mathfrak{s})$ be an extriangulated category. Motivated by the theory of hereditary algebras, we introduce the notion of a \emph{hereditary-type subcategory} $\mathcal{W}\subseteq \mathcal{E}$. We prove that the quotient $\mathcal{E}/\mathcal{W}$ is a \emph{quasi-abelian category}, that is, an additive category with kernels and cokernels in which kernels are stable under pushouts and cokernels are stable under pullbacks. Moreover, we show that $\mathcal{E}/\mathcal{W}$ is abelian if and only if $\mathcal{W}$ is a \emph{cluster tilting subcategory} in a suitable relative extriangulated structure. Several examples are provided to illustrate the main results, showing that our approach both recovers known abelian hearts and yields new abelian or quasi-abelian quotients beyond classical settings.
\end{abstract}
\keywords{extriangulated category; quasi-abelian category; abelian quotient; relative cluster tilting subcategory
}
\subjclass[2020]{18E10; 18G80}
\maketitle

\section{Introduction}

Abelian categories \cite{F} form the classical foundation of homological algebra, since they guarantee the existence of kernels and cokernels and provide a well-behaved exact structure. This categorical framework allows homological methods to be applied systematically across algebra and geometry. Nevertheless, many important categories that arise in analysis, geometry, and representation theory are not abelian, even though they still retain enough structure to support homological arguments. To accommodate such situations, Schneiders introduced the notion of a quasi-abelian category \cite{Sc}.
A quasi-abelian category is an additive category with kernels and cokernels, where kernels remain kernels under pushouts and cokernels remain cokernels under pullbacks. This concept strictly extends the abelian framework: every abelian category is quasi-abelian, but not conversely. Classical examples include the category of Banach spaces with continuous linear maps, the category of locally convex topological vector spaces, and categories of filtered modules. These examples show that quasi-abelian categories provide a natural environment for homological methods in contexts where abelian categories are too restrictive.

Nakaoka and Palu \cite{NP} introduced extriangulated categories, which unify exact and triangulated categories in a single framework. Extriangulated categories have proved to be a flexible tool in representation theory and related areas, allowing homological phenomena from both exact and triangulated contexts to be studied simultaneously. A natural question is how abelian or quasi-abelian structures can arise as quotients of extriangulated categories, and how cluster tilting theory interacts with these quotients.
The study of abelian quotients from extriangulated categories has a rich history. Nakaoka \cite{N1} proved that the heart of a cotorsion pair in a triangulated category is an abelian category.
Liu \cite{L1} showed that the heart of a cotorsion pair in an exact category is an abelian category.
Liu and Nakaoka \cite{LN} further developed the theory for extriangulated categories, showing that the heart of a twin cotorsion pair is always semi-abelian. Subsequently, Hassoun and Shah \cite{HS} proved that under certain conditions, these hearts are quasi-abelian. More recently, Liu, Yang and Zhou \cite{LYZ} showed that any heart of a twin cotorsion pair admits a largest exact category structure and is always quasi-abelian. These results provide a systematic framework for understanding when quotients of extriangulated categories carry algebraic structure.

On the other hand, cluster tilting theory provides a powerful mechanism for constructing abelian categories from triangulated and exact categories. The fundamental observation, due to Koenig and Zhu \cite{KZ}, is that for a triangulated category $\C$ with a cluster tilting subcategory $\T$, the quotient category $\C/\T$ carries a natural abelian structure. More precisely, Koenig and Zhu showed that the quotient functor induces an equivalence $\C/\T\simeq \mod \T$, where $\mod \T$ denotes the category of finitely presented contravariant additive functors from $\T$ to the category of abelian groups.
Demonet and Liu \cite{DL} proved that some subquotient categories of exact categories are abelian. This generalizes a result of Koenig-Zhu for triangulated categories. As a particular case, if an exact category $\mathcal{B}$ with enough projective and injective objects admits a cluster tilting subcategory $\mathcal{M}$, then the quotient category $\mathcal{B}/\mathcal{M}$ is abelian. More precisely, $\mathcal{B}/\mathcal{M}$ is equivalent to the category of finitely presented modules over $\underline{\mathcal M}$.
These results reveal a deep connection between the combinatorics of cluster tilting and the homological algebra of abelian categories, providing a conceptual framework that unifies the construction of module categories from both representation-theoretic and categorical perspectives.

On the other hand, cluster tilting theory offers a natural approach to constructing abelian categories from triangulated or exact categories. A fundamental result due to Koenig and Zhu \cite{KZ} states that if a triangulated category $\C$ admits a cluster tilting subcategory $\T$, then the quotient category $\C/\T$ is abelian. Moreover, the quotient functor induces an equivalence
$\C/\T \simeq \mod \T$, where $\mod \T$ denotes the category of finitely presented contravariant additive functors from $\T$ to the category of abelian groups.
This construction was subsequently extended to the exact setting by Demonet and Liu \cite{DL}, who proved that certain subquotient categories of exact categories are abelian. In particular, if an exact category $\mathcal B$ with enough projective and injective objects admits a cluster tilting subcategory $\mathcal M$, then the quotient category $\mathcal B/\mathcal M$ is abelian. Furthermore, they established an equivalence
$\mathcal B/\mathcal M \simeq \mod \underline{\mathcal M},$
where $\underline{\mathcal M}$ denotes the stable category of $\mathcal M$.
These results indicate that cluster tilting subcategories serve as a bridge between triangulated or exact categories and abelian categories. They provide a unified framework for realizing module categories as quotient categories and reveal a close interaction between cluster tilting theory, homological algebra, and the representation theory of finite-dimensional algebras.

The study of abelian quotient categories arising from cluster tilting subcategories has subsequently been extended in several directions. Beligiannis \cite{Be} developed a theory of triangulated subfactors and abelian localizations, showing that suitable rigid subcategories of triangulated categories give rise to abelian quotient categories. Zhou and Zhu \cite{ZZ} further investigated triangulated quotient categories
 in extriangulated categories and established equivalences between certain quotient categories and module categories over stable categories. These developments suggest that the existence of abelian quotient categories is governed by more general homological structures than cluster tilting subcategories alone, thereby motivating the study of analogous constructions associated with cotorsion pairs and related categorical structures.

The preceding developments naturally raise the question of whether similar constructions can be obtained in the broader framework of extriangulated categories. Introduced by Nakaoka and Palu \cite{NP}, extriangulated categories simultaneously generalize exact categories and triangulated categories, providing a common setting for a wide range of homological phenomena. In this framework, Nakaoka \cite{N2} established a general heart construction associated with cotorsion pairs and proved that the resulting heart is abelian. This construction unifies the classical hearts arising from both $t$-structures and cluster tilting subcategories in triangulated categories. Subsequently, Liu and Nakaoka \cite{LN} extended the theory to twin cotorsion pairs in extriangulated categories and showed that the associated heart is always semi-abelian. These results suggest that the appearance of abelian and semi-abelian categories is closely related to cotorsion-theoretic structures in extriangulated categories and provide further evidence that quotient and heart constructions admit a common homological interpretation in this broader setting.

In this paper we address this problem by introducing the notion of a {\bf hereditary-like} subcategory in an extriangulated category.
\begin{defn}
Let $(\E,\EE,\mathfrak{s})$ be an extriangulated category. A subcategory $\W\subsetneq \E$ is called \emph{hereditary-like} if the following conditions are satisfied:
\begin{itemize}
  \item[(1)] $\W$ is closed under direct sums and direct summands;
  \item[(2)] For any object $A\in \E$, there exists an $\EE$-triangle $A \xrightarrow{w^A} W^1\to W^2\dashrightarrow$ where $W^1,W^2\in \W$ and $w_A$ is a left $\W$-approximation;
  \item[(3)] For any object $A\in \E$, there exists an $\EE$-triangle $W_2\to W_1 \xrightarrow{w_A} A\dashrightarrow$ where $W^1,W^2\in \W$ and $w_A$ is a right $\W$-approximation.
\end{itemize}
\end{defn}

This notion is motivated by the theory of hereditary algebras. For example, if $kQ$ is a finite dimensional tame hereditary algebra, the subcategory $\W = \add({\rm Proj} ~kQ\oplus {\rm Inj} ~kQ)$ is hereditary-like in the module category $\mod kQ$ (see Example \ref{ex2} for details). 

For such a subcategory $\W$, we prove the following result.

\begin{thm}{\rm (see Theorem \ref{main} for details)}
Let $(\E,\EE,\mathfrak{s})$ be an extriangulated category and $\W$ be a hereditary-like subcategory. Then $\E/\W$ is quasi-abelian
\end{thm}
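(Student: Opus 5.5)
Write $\pi\colon \E\to\E/\W$ for the quotient functor and $\underline{f}$ for the image of a morphism $f$. The plan is to build kernels and cokernels in $\E/\W$ explicitly from the two families of $\EE$-triangles in the definition. Then I check the two stability conditions of a quasi-abelian category by showing that every kernel (resp. cokernel) in $\E/\W$ is, up to isomorphism, the image of an inflation (resp. deflation) of a specific shape. The proof then reduces to facts about $\EE$-triangles and their pushouts and pullbacks. Additivity of $\E/\W$ is clear, since $\W$ is closed under sums and summands.

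\textbf{Step 1 (cokernels).} Let $f\colon A\to B$ and take the triangle $A\xrightarrow{w^A}W^1\to W^2\dashrightarrow$ from condition (2). The morphism $\svecv{f}{w^A}\colon A\to B\oplus W^1$ is an inflation: the standard argument via (ET4)-type realization, together with the fact that $w^A$ is an inflation, gives this. Complete it to an $\EE$-triangle $A\to B\oplus W^1\xrightarrow{(g,\,*)} C\dashrightarrow$. I claim $\underline g\colon B\to C$ is a cokernel of $\underline f$ in $\E/\W$. Suppose $h\colon B\to D$ satisfies $hf=vu$ with $u\colon A\to W$ and $W\in\W$. Since $w^A$ is a left $\W$-approximation, $u$ factors through $w^A$. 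The morphism $(h,\,-v u')$ then vanishes on $A$, so it factors through $C$ by the long exact $\Hom$ sequence. This gives existence. For uniqueness, a map $C\to D$ killing $\underline g$ restricts to a map $B\oplus W^1\to D$ that factors through $\W$. I then use that the cone term $W^2$ lies in $\W$, together with right approximations of $D$, to show the map itself factors through $\W$.

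Dually, using condition (3) and $\svech{f}{w_B}\colon A\oplus W_1\to B$, which is a deflation, I get a triangle $K\to A\oplus W_1\to B\dashrightarrow$, and $\underline{k}\colon K\to A$ is a kernel of $\underline f$.

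\textbf{Step 2 (shape of kernels and cokernels).} Every cokernel in $\E/\W$ is isomorphic to one constructed in Step 1, so every cokernel is represented by a deflation $B\oplus W\to C$ whose kernel object maps to $\W$ through a left $\W$-approximation. The key observation I want is the converse: for any $\EE$-triangle $X\xrightarrow{x}Y\xrightarrow{y}Z\dashrightarrow$, the morphism $\underline y$ is a cokernel of $\underline x$ in $\E/\W$, and $\underline x$ is a kernel of $\underline y$. Condition (2) is what forces uniqueness in this converse. It says that every map $X\to\W$ extends along $x$ up to an element of $\W$; I will prove this using $\EE(W^2,-)$ and the fact that $W^2\in\W$. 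I expect this step to be the main obstacle. In general $\W$-maps do not extend along inflations, so one must exploit that the "cosyzygy" $W^2$ also lies in $\W$. Concretely, I will pass to the relative extriangulated structure $\EE_\W$ generated by triangles with $\W$-approximation ends, in which $\W$ is projective-injective. There the statement becomes the familiar fact that in the stable category short exact sequences give weak kernels and cokernels. Being honest kernels and cokernels then follows from the uniqueness arguments in Step 1.

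\textbf{Step 3 (stability).} Let $\underline g\colon B\to C$ be a cokernel, realized via Step 2 by a triangle $A\to B'\xrightarrow{g'}C\dashrightarrow\delta$ with $B'\cong B$ in $\E/\W$, and let $\underline c\colon C'\to C$ be arbitrary. Lift $c$ to $\E$ and take $c^*\delta$, which gives a triangle $A\to E\to C'\dashrightarrow c^*\delta$ together with a morphism of triangles; this is the (ET4)$^{\op}$/homotopy-pullback construction. By Step 2, $E\to C'$ is again a cokernel in $\E/\W$. It remains to show that the square $E\to B'$, $E\to C'$ is a pullback in $\E/\W$. The pullback in $\E/\W$ exists, namely the kernel of $(\underline{g'},-\underline c)$ from Step 1. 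That kernel is computed by the triangle $K\to B'\oplus C'\oplus W_1\to C$, and comparing it with the homotopy pullback triangle $E\to B'\oplus C'\to C$ identifies $K\cong E$ modulo $\W$. Stability of kernels under pushouts is dual, using $c_*\delta$ and condition (2). Hence $\E/\W$ is quasi-abelian.
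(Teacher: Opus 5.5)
There is a genuine gap in Steps 2--3, exactly where you expected the difficulty.

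\textbf{Step 2 is false as stated.} Take $\mod kQ$ with $Q\colon 1\to 2\to 3$ and $\W=\add(\mathrm{proj}\oplus\mathrm{inj})$; this is hereditary-like, and $\E/\W$ is generated by $S_2$. The conflation $S_3\to P_2\to S_2$ becomes $0\to 0\to S_2$ in $\E/\W$, and $0\to S_2$ is not a cokernel of $0\to 0$.

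What does hold is one-sided:
\begin{itemize}
\item if $x$ is $\W$-monic, then $\underline y$ is a cokernel of $\underline x$ (the epimorphism part uses condition (2));
\item if $y$ is $\W$-epic, then $\underline x$ is a kernel of $\underline y$.
\end{itemize}
So only $\EE^{\W}_{\W}$-triangles are known to give kernel--cokernel pairs.

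\textbf{Step 3 does not establish the pullback.} Your Step 1 triangle $A\to B\oplus W^1\xrightarrow{g'}C$ is only known to have a $\W$-monic inflation. That property survives $c^*$, so $E\to C'$ is indeed a cokernel. But the square must also be a pullback in $\E/\W$. For that you must correct a relation $g'r_1-cr_2=vu$ (with $u\colon R\to W\in\W$) by lifting $v$ through $g'$, and only then apply the weak pull-back property of Lemma \ref{L4}. This needs $g'$ to be $\W$-epic, which you never establish.

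Your claim that comparing with $K\to B'\oplus C'\oplus W_1\to C$ identifies $K\cong E$ modulo $\W$ is unsupported. Without further choices it is false. Take $f=0\colon S_2\to S_2$, so $\underline g=1_{S_2}$ and $C\cong S_2\oplus S_1$. Pulling back along the inclusion $c\colon S_2\to C$ gives $c^*\delta=0$ and $E\cong S_2\oplus S_2$. But the pullback of $1_{S_2}$ along $1_{S_2}$ in $\E/\W$ is $S_2$.

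\textbf{What is missing.} You need two things:
\begin{itemize}
\item[(i)] realize the cokernel $\underline g$ by a triangle whose inflation represents the kernel of $\underline g$;
\item[(ii)] show that this triangle can be taken in $\EE^{\W}_{\W}$.
\end{itemize}
Together these say that every kernel--cokernel pair in $\E/\W$ is the image of an $\EE^{\W}_{\W}$-triangle. This is Theorem \ref{ck} in the paper, proved as follows:
\begin{itemize}
\item re-resolve $C$ by a right $\W$-approximation to get a $\W$-epic triangle $A'\to B\oplus W_1\to C$;
\item transfer $\W$-monicity to it through the weak push-out square of Lemma \ref{L4};
\item conclude $A\cong A'$ in $\E/\W$, since both represent the kernel of $\underline g$.
\end{itemize}
With that in hand, the images of $\EE^{\W}_{\W}$-triangles form an exact structure on $\E/\W$ that contains every kernel--cokernel pair. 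Stability of kernels under pushouts and of cokernels under pullbacks is then an axiom of exact categories, which is how the paper concludes.

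A minor point: in Step 1, uniqueness needs only that $g\colon B\to C$ is $\W$-monic with cone $W^2\in\W$, exactly as in Lemma \ref{L3}. Right approximations of $D$ play no role.
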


Furthermore, we establish that $\E/\W$ is abelian if and only if $\W$ is a cluster tilting subcategory in a suitable relative extriangulated structure under certain assumptions.

\begin{thm} {\rm (see Theorem \ref{thm3} for details)}
Let $(\E,\EE,\mathfrak{s})$ be a Krull-Schmidt extriangulated category and $\W$ be a hereditary-like subcategory. Assume that $\E/\W$ is left locally finite. Then $\E/\W$ is abelian if and only if $\W$ is an $\EE_{\mathfrak{e}}$-cluster tilting subcategory in extriangulated subcategory $(\E,\EE_{\mathfrak{e}},\mathfrak{s}|_{\EE_{\mathfrak{e}}})$.
\end{thm}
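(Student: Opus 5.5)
The plan is to use the description of kernels and cokernels in $\E/\W$ obtained in the proof of Theorem \ref{main}. Since Theorem \ref{main} already makes $\E/\W$ quasi-abelian, it is abelian exactly when every morphism is strict. Equivalently, for every morphism $\underline f$ the canonical map $\mathrm{Coim}\,\underline f\to \mathrm{Im}\,\underline f$ is an isomorphism. Equivalently again, every morphism that is both a monomorphism and an epimorphism in $\E/\W$ is an isomorphism.

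The first step is to recall how kernels and cokernels are built. Take $f\colon A\to B$. Choose the $\EE$-triangle $A\xrightarrow{w^A}W^1\to W^2\dashrightarrow$ from condition (2) of the definition. Replace $f$ by the inflation $\svecv{f}{w^A}\colon A\to B\oplus W^1$, and take its $\EE$-triangle $A\to B\oplus W^1\to C\dashrightarrow$. Then $C$ represents $\mathrm{Cok}\,\underline f$. Dually, kernels come from the right approximations in condition (3).

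The second step is to characterize the extensions that appear. I expect $\EE_{\mathfrak e}$ to be the closed subfunctor of $\EE$ made of those extensions $\delta$ whose $\EE$-triangle $A\xrightarrow{x}B\xrightarrow{y}C\overset{\delta}{\dashrightarrow}$ becomes a short exact sequence in $\E/\W$; that is, $\underline x=\ker\underline y$ and $\underline y=\mathrm{cok}\,\underline x$. I will first check that these triangles are exactly the images of conflations of $(\E,\EE_{\mathfrak e},\mathfrak s|_{\EE_{\mathfrak e}})$. This step uses the previous one: the cokernel of $\underline x$ is computed by a triangle, and this triangle differs from the given one by summands in $\W$. I will then check that $\W$ is functorially finite and that $\W$ consists of $\EE_{\mathfrak e}$-projective-injective-like objects relative to these triangles. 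Both facts follow from conditions (2) and (3).

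For the implication ($\Leftarrow$), assume $\W$ is $\EE_{\mathfrak e}$-cluster tilting, and let $\underline f$ be both a monomorphism and an epimorphism. I will use the left $\W$-approximation of the cokernel object to produce an $\EE_{\mathfrak e}$-triangle relating $A$ and $B$. The term measuring the defect of $\underline f$ is then forced to satisfy $\EE_{\mathfrak e}(\W,X)=0$, so it lies in $\W$ by the cluster tilting condition. Hence $\underline f$ is an isomorphism. This mirrors the Koenig--Zhu and Demonet--Liu arguments.

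For the implication ($\Rightarrow$), assume $\E/\W$ is abelian. The inclusion $\W\subseteq\{X\mid \EE_{\mathfrak e}(\W,X)=0\}$ is formal. For the reverse inclusion, let $X$ be indecomposable, not in $\W$, with $\EE_{\mathfrak e}(\W,X)=0$; I need to derive a contradiction. Krull--Schmidt lets me work with indecomposables and minimal approximations. The condition $\EE_{\mathfrak e}(\W,X)=0$ should force the morphism $W_1\to X$, or a related morphism $X\to W^1$, to induce a nonzero monomorphism-epimorphism in $\E/\W$ that is not an isomorphism; this is impossible in an abelian category.

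I expect this last step to be the main obstacle. Producing the mono-epi may require iterating the approximations and obtaining a chain of proper monomorphisms into a fixed object. Left local finiteness, meaning $\Hom_{\E/\W}(-,X)$ has finite length, makes such a chain stabilize. This is exactly where I would use the hypothesis. The dual inclusion follows by the symmetric argument using condition (2).
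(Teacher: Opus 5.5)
There is a genuine gap, and it begins with the definitions. In the paper, $\EE_{\mathfrak{e}}=\EE_{\W\backslash {_\mathfrak{e}}\W}$ consists of the extensions realized by $\EE$-triangles whose deflation is epic only against objects of $\W$ with no summand in ${_\mathfrak{e}}\W$. There is no monic condition. Nonzero objects of ${_\mathfrak{e}}\W$ are never $\EE_{\mathfrak{e}}$-projective: by Corollary \ref{rightc}, an epic-$\EE$-triangle ending at such an object is a non-split $\EE_{\mathfrak{e}}$-triangle.

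Your guess (extensions that become short exact in $\E/\W$, with $\W$ projective-injective) cannot be right. It gives $\EE_{\mathfrak{e}}(\W,X)=0$ for every $X$, so your cluster tilting condition $\{X\mid \EE_{\mathfrak{e}}(\W,X)=0\}=\W$ would force $\W=\E$. The definition of hereditary-like excludes that.

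Moreover, $\EE_{\mathfrak{e}}$-cluster tilting in this paper means two things: $\W$ is hereditary-like in $(\E,\EE_{\mathfrak{e}},\mathfrak{s}|_{\EE_{\mathfrak{e}}})$, and $\EE_{\mathfrak{e}}(\W,\W)=0$. The first condition is Proposition \ref{here}, which holds with no abelian hypothesis. It already gives $\{X\mid\EE_{\mathfrak{e}}(\W,X)=0\}\subseteq\W$, by splitting the $\EE_{\mathfrak{e}}$-triangle $X\to W^1\to W^2\dashrightarrow$. So you have the difficulty reversed. The inclusion you single out as the main obstacle is immediate. The rigidity $\EE_{\mathfrak{e}}(\W,\W)=0$, which you call formal, is the entire content of the ``only if'' direction, and nothing in your plan addresses it.

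The missing ingredient is Lemma \ref{e}. Take indecomposable $W,W'\in\W$.
\begin{itemize}
\item If $W\notin{_\mathfrak{e}}\W$, then $\EE_{\mathfrak{e}}(W,W')=0$ directly, since $1_W$ lifts through the deflation.
\item If $W\in{_\mathfrak{e}}\W$, abelianness (through Proposition \ref{mo} and Corollary \ref{moc}) gives an $\EE$-triangle $A_0\xrightarrow{a_0}W_0\to W\dashrightarrow$ with $a_0$ a minimal left $\W$-approximation. If $W_0$ has summands in ${_\mathfrak{e}}\W$, one resolves again, producing a chain of such triangles with indecomposable $A_i$.
\item Left local finiteness forces this chain to terminate. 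Here it means, as in the paper, that only finitely many indecomposables map nontrivially to a given one, not that $\Hom_{\E/\W}(-,X)$ has finite length. A repetition $A_j\simeq A_k$ contradicts left minimality, and an eventual $\Hom_{\E/\W}(A_l,A_0)=0$ contradicts non-splitting. This yields such a triangle with $W_0\in\W\backslash{_\mathfrak{e}}\W$.
\item Now let $\delta\in\EE_{\mathfrak{e}}(W,W')$ be realized by $W'\to U\xrightarrow{u}W\dashrightarrow$. The map $W_0\to W$ lifts through $u$, so $\delta$ pulls back to zero on $W_0$. This gives an $\EE$-triangle $A_0\xrightarrow{\alpha}W'\oplus W_0\to U\dashrightarrow$ mapping to the $a_0$-triangle with third component $u$.
\item Since $\alpha$ factors through the left $\W$-approximation $a_0$, there is also a morphism back, with third component $u'$. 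Left minimality of $a_0$ makes $uu'$ invertible, so $u$ is a retraction and $\delta=0$.
\end{itemize}

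Finally, the paper's written proof treats only the ``only if'' direction, so your ($\Leftarrow$) sketch has nothing to be compared with. As it stands it never identifies the triangle or the ``defect'' term, and it relies on the misidentified $\EE_{\mathfrak{e}}$, so it is not yet an argument.
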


These results reveal a close relationship between cluster tilting theory and abelian quotient categories, extending several classical constructions from exact and triangulated categories to the extriangulated setting. At the same time, extriangulated categories provide a common framework encompassing both exact and triangulated categories, while relative extriangulated structures offer a natural setting for the formulation and study of cluster tilting conditions.
\vspace{1mm}

The paper is organized as follows. In Section 2, we review some preliminary material on extriangulated categories. Section 3 introduces hereditary-like subcategories and proves that the quotient category $\E/\W$ is quasi-abelian. In Section 4, we establish the relationship between cluster tilting subcategories and abelian quotient categories. Section 5 concludes the paper with several examples illustrating the main results.

\section{Preliminaries}

We first recall the terminology and basic properties of extriangulated categories that will be used throughout the paper. For the complete definitions and foundational results, we refer the reader to \cite[Sections 2, 3]{NP}.

Let $\E$ be an additive category, and let
\[
\mathbb{E} \colon \E^{\mathrm{op}} \times \E \longrightarrow \mathrm{Ab}
\]
be an additive bifunctor, where $\mathrm{Ab}$ denotes the category of abelian groups. For objects $A,C\in\E$, an element $\delta\in\mathbb{E}(C,A)$ is called an \emph{$\mathbb{E}$-extension}.

Let $\mathfrak{s}$ be a correspondence which assigns to each $\mathbb{E}$-extension $\delta\in\mathbb{E}(C,A)$ an equivalence class
\[
\mathfrak{s}(\delta)
=
\xymatrix@C=0.8cm{[A \ar[r]^x & B \ar[r]^y & C]}.
\]
Such a correspondence $\mathfrak{s}$ is called a \emph{realization} of $\mathbb{E}$ if it satisfies the compatibility conditions in~\cite[Definition~2.9]{NP}.

A triple $(\mathcal{E},\mathbb{E},\mathfrak{s})$ is called an \emph{extriangulated category} if the following conditions hold:
\begin{enumerate}
  \item $\mathbb{E} \colon \mathcal{E}^{\mathrm{op}}\times\mathcal{E}\to\mathrm{Ab}$ is an additive bifunctor;
  \item $\mathfrak{s}$ is an additive realization of $\mathbb{E}$;
  \item the pair $(\mathbb{E},\mathfrak{s})$ satisfies the axioms in~\cite[Definition~2.12]{NP}.
\end{enumerate}
\vspace{2mm}

When no confusion can arise, we simply write $\mathcal{E}$ for the extriangulated category $(\mathcal{E},\mathbb{E},\mathfrak{s})$.   Throughout this article, unless otherwise stated, we assume that $\E:=(\mathcal{E},\mathbb{E},\mathfrak{s})$ is an extriangulated category. Whenever we say that $\mathcal{D}$ is a subcategory of $\mathcal{E}$, we always mean that $\mathcal{D}$ is full and closed under isomorphisms.
We will use the following terminology.

\begin{defn}[{\cite[Definitions~2.15 and~2.19]{NP}}]
Let $\mathcal{E}$ be an extriangulated category.
\begin{enumerate}
  \item[{\rm (1)}] A sequence
  \[
  A \xrightarrow{x} B \xrightarrow{y} C
  \]
  is called a \emph{conflation} if it realizes an $\mathbb{E}$-extension $\delta\in\mathbb{E}(C,A)$. In this case, $x$ is called an \emph{inflation}, and $y$ is called a \emph{deflation}.

  \item[{\rm (2)}] If a conflation
  \[
  A \xrightarrow{x} B \xrightarrow{y} C
  \]
  realizes an extension $\delta\in\mathbb{E}(C,A)$, then the pair
  \[
  (A \xrightarrow{x} B \xrightarrow{y} C,\delta)
  \]
  is called an \emph{$\mathbb{E}$-triangle}. We write it as
  \[
  A \overset{x}{\longrightarrow} B
  \overset{y}{\longrightarrow} C
  \overset{\delta}{\dashrightarrow}.
  \]
  If the extension class $\delta$ is clear or irrelevant, we omit it from the notation.

  \item[{\rm (3)}] Let
  \[
  A \overset{x}{\longrightarrow} B
  \overset{y}{\longrightarrow} C
  \overset{\delta}{\dashrightarrow}
\hspace{2mm}\mbox{and}\hspace{2mm}
  A' \overset{x'}{\longrightarrow} B'
  \overset{y'}{\longrightarrow} C'
  \overset{\delta'}{\dashrightarrow}
  \]
  be two $\mathbb{E}$-triangles. A triple of morphisms $(a,b,c)$ is called a \emph{morphism of $\mathbb{E}$-triangles} if it realizes a morphism $(a,c)\colon\delta\to\delta'$. In this case, we have a commutative diagram
  \[
  \xymatrix{
  A \ar[r]^x \ar[d]^a
    & B \ar[r]^y \ar[d]^b
    & C \ar@{-->}[r]^{\delta} \ar[d]^c
    & {} \\
  A' \ar[r]^{x'}
    & B' \ar[r]^{y'}
    & C' \ar@{-->}[r]^{\delta'}
    & {} .
  }
  \]
\end{enumerate}
\end{defn}

The following lemma will be used frequently in what follows.

\begin{lem}[{\cite[Proposition 1.20]{LN}\label{L4}}]
Let $A\xrightarrow{x} B\xrightarrow{y} C\overset{\delta}\dashrightarrow$ be any $\EE$-triangle. Let $f:A\to D$ be any morphism and $D \xrightarrow{d} E\xrightarrow{e} C\dashrightarrow$ be an $\EE$-triangle realizing $f_*\delta$. Then there is a morphism $g:B\to E$ which gives a commutative diagram of $\EE$-triangles
$$\xymatrix{
A\ar[r]^x \ar[d]_{f} &B\ar[r]^y \ar[d]^{\exists ~g} &C\ar@{-->}[r]^{\delta} \ar@{=}[d]&\\
D \ar[r]_d &E\ar[r]_e &C\ar@{-->}[r]_{f_*\delta} &
}
$$
and an $\EE$-triangle $A \xrightarrow{\svecv{-f}{x}} D\oplus B \xrightarrow{\svech{d}{g}} E \overset{e^*\delta}\dashrightarrow$. This means the left square of the commutative diagram is a weak push-out and  a weak pull-back:
\begin{itemize}
\item[(i)] If there are morphisms $s:D\to R$ and $t:B\to R$ such that $sf=tx$, then there exists a morphism $r:E\to R$ (not necessarily unique) which makes the following diagram commute:
$$\xymatrix{
A\ar[r]^x \ar[d]_{f} &B \ar[d]^g \ar@/^8pt/[ddr]^t\\
D \ar[r]^d \ar@/_8pt/[drr]_s &E \ar@{.>}[dr]^r\\
&&R
}
$$
\item[(ii)] If there are morphisms $s':R'\to D$ and $t':R'\to B$ such that $ds'=gt'$, then there exists a morphism $r':R'\to A$ (not necessarily unique) which makes the following diagram commute:
$$\xymatrix{
R' \ar@/_8pt/[ddr]_{s'} \ar@/^8pt/[drr]^{t'} \ar@{.>}[dr]^{r'}\\
&A\ar[r]^x \ar[d]_{f} &B \ar[d]^g\\
&D \ar[r]^d  &E
}
$$
\end{itemize}
\end{lem}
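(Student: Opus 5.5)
The statement to prove is Lemma \ref{L4}, cited from \cite[Proposition 1.20]{LN}. I would derive it directly from the extriangulated axioms (ET4) and (ET4)$^{\rm op}$ of \cite{NP}, together with the basic exactness properties of $\EE$-triangles.

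\textbf{Step 1: the morphism $g$ and the $\EE$-triangle.} Since $(f,\mathrm{id}_C)\colon \delta\to f_*\delta$ is a morphism of $\EE$-extensions, axiom (ET3) together with the realization property gives a morphism $g\colon B\to E$ with $gx=df$ and $eg=y$. Any such $g$ is not automatically the "right" one. So I would instead use the standard result \cite[Proposition 3.15]{NP} (or the argument of \cite[Corollary 3.16]{NP}). Applied to the pair of $\EE$-triangles sharing the third term $C$, it produces a $g$ with $gx=df$, $eg=y$, and an $\EE$-triangle
\[
A\xrightarrow{\svecv{-f}{x}} D\oplus B\xrightarrow{\svech{d}{g}} E\overset{e^*\delta}{\dashrightarrow}.
\]
Concretely, I would realize $\delta$ and $f_*\delta$, apply (ET4)$^{\rm op}$ to the composable deflations $D\oplus B\to B\to C$, and identify the resulting extension via the compatibility $e^*\delta$. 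This homotopy-cartesian identification is the main obstacle. It requires the sign and extension bookkeeping of \cite[Prop.~3.15]{NP}. I would essentially follow that proof, checking that the extension class is $e^*\delta$ as written.

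\textbf{Step 2: weak pushout (i).} Given $s\colon D\to R$ and $t\colon B\to R$ with $sf=tx$, note that $\svech{s}{-t}\circ\svecv{-f}{x}=-sf-tx$. This is not zero, so I would use $\svech{-s}{t}$ instead, which does satisfy $\svech{-s}{t}\circ\svecv{-f}{x}=sf+tx$. Adjusting signs, the correct combination to use is $\svech{s}{t}$ with the sign convention of the inflation; in all cases some sign choice of $(s,t)$ composes to zero with $\svecv{-f}{x}$. By the exactness of $\E(-,R)$ on the $\EE$-triangle from Step 1 (\cite[Corollary 3.12]{NP}), that map factors through $\svech{d}{g}$ via some $r\colon E\to R$. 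This gives $rd=s$ and $rg=t$, after tracking signs consistently.

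\textbf{Step 3: weak pullback (ii).} Dually, given $s'\colon R'\to D$ and $t'\colon R'\to B$ with $ds'=gt'$, the map $\svecv{-s'}{t'}\colon R'\to D\oplus B$ composes to zero with $\svech{d}{g}$. By exactness of $\E(R',-)$, it factors as $\svecv{-f}{x}\circ r'$ for some $r'\colon R'\to A$. Hence $fr'=s'$ and $xr'=t'$.

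Non-uniqueness is expected, since the factorizations in Steps 2 and 3 are determined only up to maps factoring through the extension. The only real work is Step 1.
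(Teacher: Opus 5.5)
Your proposal is correct and follows the same route as the paper, which gives no separate proof but relies on \cite[Proposition 1.20]{LN} (that is, \cite[Corollary 3.16]{NP}) for $g$ and the $\EE$-triangle $A\xrightarrow{\svecv{-f}{x}}D\oplus B\xrightarrow{\svech{d}{g}}E\overset{e^*\delta}{\dashrightarrow}$; (i) and (ii) are then exactly the exactness of $\E(-,R)$ and $\E(R',-)$ on this triangle, as in your Steps 2 and 3. The only blemish is the sign detour in Step 2: $\svech{-s}{t}\svecv{-f}{x}=sf+tx$ is not zero, so use $\svech{s}{t}$ directly, since $\svech{s}{t}\svecv{-f}{x}=-sf+tx=0$ yields $r$ with $rd=s$ and $rg=t$.
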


\begin{defn}\label{DF1} \rm
Let $\C,\D$ be subcategories of $\E$.
For any pair of objects $Z,X\in \E$, let $\EE^{\D}(Z,X)$ be the subset of $\EE(Z,X)$ such that $\delta\in \EE^{\D}(Z,X)$ if it admits an $\EE$-triangle
$$\xymatrix{X\ar[r]^x &Y\ar[r]^y &Z\ar@{-->}[r]^{\delta} &}$$
which satisfies the following condition:
\begin{itemize}
\item[(m)] $x$ is $\D$-monic, which means $\Hom_{\E}(Y,D)\xrightarrow{\Hom_{\E}(x,D)}\Hom_{\E}(X,D)$ is surjective for any $D\in \D$.
\end{itemize}

For convenience, such $\EE$-triangles are called \defines{$\EE^{\D}$-triangles}.
By \cite[Definition 3.18 and Proposition 3.19]{HLN} for $n = 1$,
$\EE^{\D}$ is a closed subfunctor of $\EE$.
\vspace{1mm}

Dually, let $\EE_{\C}(Z,X)$ be the subset of $\EE(Z,X)$ such that $\sigma\in \EE_{\C}(Z,X)$ if it admits an $\EE$-triangle
$$\xymatrix{X\ar[r]^x &Y\ar[r]^y &Z\ar@{-->}[r]^{\sigma} &}$$
which satisfies the following condition:
\begin{itemize}
\item[(e)] $y$ is $\C$-epic, which means $\Hom_{\E}(C,Y)\xrightarrow{\Hom_{\E}(C,y)}\Hom_{\E}(C,Z)$ is surjective for any $C\in \C$.
\end{itemize}

For convenience, such $\EE$-triangles are called \defines{$\EE_{\C}$-triangles}. By  \cite[Definition 3.18 and Proposition 3.19]{HLN} for $n = 1$, $\EE_{\C}$ is also a closed subfunctor of $\EE$.
\vspace{1mm}

Finally, let $\EE_{\C}^{\D}(Z,X)$ be the subset of $\EE(Z,X)$, consisting of those $\eta\in \EE_{\C}(Z,X)$ which admits an $\EE$-triangle
$$\xymatrix{X\ar[r]^x &Y\ar[r]^y &Z\ar@{-->}[r]^{\eta} &}$$
which satisfies condition {\rm (m)} and {\rm (e)}. For convenience, such $\EE$-triangles are called \defines{$\EE_{\C}^{\D}$-triangles}. $\EE^{\D}_{\C}$ is a closed subfunctor of $\EE$.

\end{defn}

Let $\mathbb{F}$ be a subfunctor of $\EE$. Denote by $\mathfrak{s}|_{\mathbb{F}}$ the restriction of $\mathfrak{s}$ onto the $\EE$-extensions realized by $\mathbb{F}$-triangles. We have the following proposition.

\begin{prop}\label{main1}
Let $\C,\D$ be a subcategory of $\E$. Then the following
\begin{itemize}
\item[(1)] $(\E,\EE_{\D},\mathfrak{s}|_{\EE_{\D}})$,
\item[(2)] $(\E,\EE^{\C},\mathfrak{s}|_{\EE^{\C}})$,
\item[(3)] $(\E,\EE^{\C}_{\D},\mathfrak{s}|_{\EE^{\C}_{\D}})$
\end{itemize}
are extriangulated subcategory of $(\E,\EE,\mathfrak{s})$. Moreover, if $\C=\D$, then $\D$ is a subcategory of projective-injective objects in $(\E,\EE^{\D}_{\D},\mathfrak{s}|_{\EE^{\D}_{\D}})$, $\E/\D$ inherits an extriangulated category structure from $(\E,\EE^{\D}_{\D},\mathfrak{s}|_{\EE^{\D}_{\D}})$.
\end{prop}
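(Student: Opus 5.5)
The plan is to reduce items (1)--(3) to the general theory of closed subfunctors, and then check the projective-injective and quotient claims by hand. The key input is \cite[Proposition 3.16]{NP}: if $\mathbb{F}\subseteq\EE$ is a closed subfunctor, then $(\E,\mathbb{F},\mathfrak{s}|_{\mathbb{F}})$ is an extriangulated category. Here closed means that $\mathbb{F}$-inflations are closed under composition, or equivalently that $\mathbb{F}$-deflations are.

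For (1) and (2), Definition \ref{DF1} already records, via \cite[Definition 3.18 and Proposition 3.19]{HLN} with $n=1$, that $\EE_{\D}$ and $\EE^{\C}$ are closed subfunctors. So these two cases follow at once. If one wants the argument explicitly, there are two checks.
\begin{itemize}
\item[(a)] Subfunctor: a $\D$-epic deflation stays $\D$-epic after a pullback along $c:C'\to C$, and after a pushout along $a:A\to A'$. For the pullback this uses the weak pull-back property in Lemma \ref{L4}(ii). For the pushout it uses that the pushed-out triangle has the same deflation target and the comparison morphism from Lemma \ref{L4}.
\item[(b)] Closedness: compositions of $\D$-epic deflations are again $\D$-epic, since a composite of surjections $\Hom(D,-)$ is surjective.
\end{itemize}

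For (3), I note that $\EE^{\C}_{\D}=\EE^{\C}\cap\EE_{\D}$ as subsets. The one subtle point is that condition (m) and condition (e) are properties of the realizing conflation. Any two realizations of the same $\eta$ are isomorphic via an isomorphism fixing the end terms, so both conditions hold for one realization if and only if they hold for every realization. An intersection of subfunctors is a subfunctor. Closedness is also preserved: $\mathbb{F}$-deflations for $\EE^{\C}\cap\EE_{\D}$ are exactly the deflations that are both $\EE^{\C}$-deflations and $\EE_{\D}$-deflations, and both of these classes are closed under composition. So (3) again follows from \cite[Proposition 3.16]{NP}. I expect this intersection step to be the main obstacle. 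Closedness of an intersection is not completely formal, because one has to know that the deflation classes intersect in this way, and that uses the realization-independence just described.

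For the final assertion, let $\C=\D$ and $W\in\D$. To see that $W$ is projective in $\EE^{\D}_{\D}$, I take any $\EE^{\D}_{\D}$-triangle $X\xrightarrow{x}Y\xrightarrow{y}Z\overset{\eta}{\dashrightarrow}$. By condition (e), every morphism $W\to Z$ factors through $y$. By the long exact sequence \cite[Corollary 3.12]{NP}, $\eta^{\ast}$ then vanishes on $\Hom(W,Z)$. Hence for every $h:W\to Z$ the element $h^{\ast}\eta$ is split, so $\EE^{\D}_{\D}(W,X)=0$. Injectivity follows dually from condition (m). Once $\D$ consists of projective-injective objects, \cite[Proposition 3.30]{NP} shows that the ideal quotient $\E/\D$ inherits an extriangulated structure from $(\E,\EE^{\D}_{\D},\mathfrak{s}|_{\EE^{\D}_{\D}})$. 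That result applies to any full additive subcategory of projective-injectives; if needed, I first replace $\D$ by $\add\D$, which gives the same ideal.
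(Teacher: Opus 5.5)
Your proof is correct and follows the route the paper relies on. The paper gives no written proof: it cites \cite[Definition 3.18 and Proposition 3.19]{HLN} for closedness of $\EE^{\D}$ and $\EE_{\C}$, asserts closedness of $\EE^{\D}_{\C}$, and leaves \cite[Proposition 3.16]{NP} and \cite[Proposition 3.30]{NP} implicit; you correctly supply the omitted points, namely closedness of $\EE^{\C}\cap\EE_{\D}$ (valid because (m) depends only on the inflation and (e) only on the deflation), projective-injectivity of $\D$, and the harmless passage to $\add\D$.

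One small slip: in the pullback step of (a) you need the dual of Lemma~\ref{L4} (there the right-hand square is a weak pull-back), not Lemma~\ref{L4}(ii). More simply, for $h\colon D\to C'$ you have $h^{*}(c^{*}\delta)=(ch)^{*}\delta=0$ because $ch$ factors through the original deflation, and the long exact sequence then gives the factorization of $h$ through the new deflation.
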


\begin{defn}\rm
A subcategory $\W\subsetneq \E$ is called \defines{hereditary-like} if the following conditions are satisfied:
\begin{itemize}
  \item[(1)] $\W$ is closed under direct sums and direct summands.
  \item[(2)] For any object $A\in \E$, there exists an $\EE$-triangle $A \xrightarrow{w^A} W^1\to W^2\dashrightarrow$ where $W^1,W^2\in \W$ and $w_A$ is a left $\W$-approximation.
  \item[(3)] For any object $A\in \E$, there exists an $\EE$-triangle $W_2\to W_1 \xrightarrow{w_A} A\dashrightarrow$ where $W^1,W^2\in \W$ and $w_A$ is a right $\W$-approximation.
\end{itemize}
\end{defn}

\begin{exm}\rm
Let $k$ be a field and $Q$ be the following quiver:
$$1\to 2\to \cdots \to n.$$
Let $\W$ be subcategory consisted by the direct sums of the projective modules and injective modules in $\mod kQ$. Then $\W$ is a hereditary-like subcategory in $\mod kQ$. Moreover, $\mod kQ/\W$ is abelian.
\end{exm}

\begin{exm}\label{ex2}\rm
Let $k$ be a field and consider the finite dimensional path algebra $kQ$, where $Q$ is a quiver of tame type. Let $\W$ be subcategory consisted by the direct sums of the projective modules and injective modules in $\mod kQ$. If $kQ$ is a finite dimensional hereditary algebra, then $\W$ is a hereditary-like subcategory in $\mod kQ$.
\end{exm}

From now on, we always assume that subcategory $\W$ is hereditary-like.

\begin{lem}
If $\W$ is hereditary-like, then it contains all the projective objects and all the injective objects.
\end{lem}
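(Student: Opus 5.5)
Let $P$ be a projective object of $\E$; that is, $\EE(P,-)=0$. We show that $P\in\W$; the injective case is dual.

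By condition (3) of the definition of hereditary-like, there is an $\EE$-triangle
$$W_2\xrightarrow{u} W_1\xrightarrow{w_P} P\overset{\delta}{\dashrightarrow}$$
with $W_1,W_2\in\W$. Since $P$ is projective, $\delta\in\EE(P,W_2)=0$, so $\delta=0$. A realization of the zero extension is the split triangle, by the additivity of $\mathfrak{s}$ \cite[Definition 2.10 and Remark 2.11]{NP}. Hence the conflation is isomorphic to $W_2\to W_2\oplus P\to P$, and $w_P$ is a split epimorphism.

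One could argue this more directly. Apply $\Hom_{\E}(P,-)$ to the triangle and use the long exact sequence \cite[Corollary 3.12]{NP}:
$$\Hom_{\E}(P,W_1)\xrightarrow{(w_P)_*}\Hom_{\E}(P,P)\to \EE(P,W_2)=0.$$
So $\mathrm{id}_P$ lifts to a section $s\colon P\to W_1$ with $w_P s=\mathrm{id}_P$. This makes $P$ a direct summand of $W_1$: since $\E$ is additive and $w_P s$ is the identity, the idempotent $s w_P$ on $W_1$ splits. Its image is isomorphic to $P$, and it is realized concretely via the split triangle above. Condition (1), closure under direct summands, then gives $P\in\W$.

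Dually, let $I$ be injective, i.e.\ $\EE(-,I)=0$. Condition (2) gives an $\EE$-triangle
$$I\xrightarrow{w^I}W^1\to W^2\overset{\epsilon}{\dashrightarrow},$$
and $\epsilon\in\EE(W^2,I)=0$. Applying $\Hom_{\E}(-,I)$ and using the exact sequence $\Hom_{\E}(W^1,I)\to\Hom_{\E}(I,I)\to\EE(W^2,I)=0$ yields a retraction $r$ with $r w^I=\mathrm{id}_I$. Thus $I$ is a direct summand of $W^1\in\W$, and $I\in\W$ by (1).

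The only point requiring care is that idempotents split, which is needed to pass from a split epimorphism to a direct summand. This is why I use the triangle-realization argument ($\delta=0$ forces the split conflation) rather than assuming $\E$ is idempotent complete. Neither the approximation property nor the hypothesis $\W\subsetneq\E$ is needed.
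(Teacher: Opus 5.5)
Your proof is correct. It uses the same mechanism as the paper: a triangle whose extension vanishes splits, and closure of $\W$ under direct summands finishes the argument. The difference is which half of the definition is applied to which object, and your assignment is the one that works.

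For a projective $P$, the paper takes the triangle $P\xrightarrow{w^P}W^1\to W^2\dashrightarrow$ from condition (2) and asserts it splits because $P$ is projective. That extension lies in $\EE(W^2,P)$, which projectivity of $P$ does not kill. The same problem arises in the paper's ``similarly'' for injectives if condition (3) is used there. Condition (2) alone genuinely cannot give the result: the injective modules over a non-semisimple hereditary algebra satisfy (1) and (2) and form a proper subcategory, yet they miss the non-injective projectives. Your pairing is what the statement actually needs: condition (3) for $P$, where $\delta\in\EE(P,W_2)=0$, and condition (2) for $I$, where $\epsilon\in\EE(W^2,I)=0$.

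One small correction concerns your ``more direct'' variant. The idempotent $sw_P$ splits through $P$ trivially, because $w_Ps=\mathrm{id}_P$. What a direct-sum decomposition of $W_1$ requires is a splitting of the complementary idempotent $\mathrm{id}_{W_1}-sw_P$. That splitting comes from the split conflation, which gives $W_1\cong W_2\oplus P$; it does not come from additivity of $\E$. Likewise, in the injective case you should conclude from $\epsilon=0$ that $W^1\cong I\oplus W^2$, rather than from the retraction $r$ alone. You explicitly rely on the triangle-realization argument, so the proof stands.
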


\begin{proof}
For any projective object $P$, there exists an $\EE$-triangle
$$P \xrightarrow{~w^P~} W^1\to W^2\dashrightarrow$$
 where $W^1,W^2\in \W$. Since $P$ is projective, the above $\EE$-triangle
 splits, thus $w^P$ is \checkLYZ{a} section, that is, $P$ is a direct summand of $W^1\in \W$.
 Since $\W$ is closed under direct summands, we have $P\in\W$.

 Similarly, we can show that $\W$ contains all the injective objects.
\end{proof}

We recall some notations.
For a subcategory $\W \subseteq \E$, we denote by
\[
{^{\bot_1}}\W = \{\, X \in \E \mid \mathbb{E}(X,W)=0 \ \text{for all } W \in \W \,\};
\]
\[
\W^{\bot_1} = \{\, Y \in \E \mid \mathbb{E}(W,Y)=0 \ \text{for all } W \in \W \,\}.
\]

\begin{lem}
If $\W$ is hereditary-like, then ${^{\bot_1}}\W\subseteq \W$ and $\W^{\bot_1}\subseteq \W$.
\end{lem}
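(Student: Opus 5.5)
We prove the two inclusions separately; the argument is the same splitting trick as in the preceding lemma, with projectivity replaced by an $\EE$-vanishing condition.

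\textbf{The inclusion ${^{\bot_1}}\W\subseteq \W$.} Let $X\in{^{\bot_1}}\W$. By condition (3) of the definition of a hereditary-like subcategory, there is an $\EE$-triangle
$$W_2\xrightarrow{~u~} W_1\xrightarrow{~w_X~} X\overset{\delta}{\dashrightarrow}$$
with $W_1,W_2\in\W$. Since $W_2\in\W$, the hypothesis on $X$ gives $\EE(X,W_2)=0$, so $\delta=0$. An $\EE$-triangle with trivial extension class is split by the axioms of \cite{NP}. It is realized by the split sequence $W_2\to W_2\oplus X\to X$, and any realization of $0$ is isomorphic to this one. Hence $w_X$ is a retraction, so $X$ is a direct summand of $W_1$. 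By condition (1), $\W$ is closed under direct summands, and therefore $X\in\W$.

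\textbf{The inclusion $\W^{\bot_1}\subseteq \W$.} Dually, let $Y\in\W^{\bot_1}$. Condition (2) gives an $\EE$-triangle
$$Y\xrightarrow{~w^Y~} W^1\to W^2\overset{\eta}{\dashrightarrow}$$
with $W^1,W^2\in\W$ and $\eta\in\EE(W^2,Y)$. Since $W^2\in\W$, we have $\EE(W^2,Y)=0$, so $\eta=0$ and the triangle splits. Thus $w^Y$ is a section, $Y$ is a direct summand of $W^1$, and $Y\in\W$ by condition (1).

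There is no real obstacle here. The only point that needs care is that a vanishing extension forces the conflation to split, and hence the approximation map to be a section or retraction. This follows directly from the definition of a realization in \cite[Definition 2.9]{NP}, exactly as it was used in the previous lemma for projective and injective objects.
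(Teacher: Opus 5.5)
Your proof is correct and matches the paper's argument: for $X\in{^{\bot_1}}\W$ you take the approximation triangle from condition (3), observe that its extension class lies in $\EE(X,W_2)=0$ so the triangle splits, and conclude by closure under direct summands, with the dual argument via condition (2) for $\W^{\bot_1}$. One minor citation point: the fact that the zero extension is realized by the split sequence is part of the definition of an \emph{additive} realization, \cite[Definition 2.10]{NP}, not Definition 2.9, but this does not affect the argument.
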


\begin{proof}
 For any $M\in{^{\bot_1}}\W$, we have $\mathbb{E}(M,\W)=0$.
Since  $\W$ is hereditary-like, then there exists an  $\EE$-triangle
 $$W_2\to W_1 \xrightarrow{~w_M~} M\dashrightarrow$$
  where $W^1,W^2\in \W$. Since $\mathbb{E}(M,W_2)=0$,
  we have that the above $\EE$-triangle splits. It follows that
  $w_M$ is retraction and then $M$ is a direct summand of $W_1\in W$.
  Since $\W$ is closed under direct summands, we have $M\in\W$.

Similarly, we can show $\W^{\bot_1}\subseteq \W$.
\end{proof}

\begin{prop}\label{twin}
Let $\E$ be a Krull--Schmidt extriangulated category with enough projectives and injectives.
Suppose that $\W$ is hereditary-like and closed under extensions.
Then
$$
\big(({^{\bot_1}}\W,\W),\,(\W,\W^{\bot_1})\big)
$$
forms a twin cotorsion pair in $\E$.
\end{prop}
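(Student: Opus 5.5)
We need to check the twin cotorsion pair axioms. Following Nakaoka and Liu--Nakaoka, $((\mathcal S,\mathcal T),(\mathcal U,\mathcal V))$ is a twin cotorsion pair if both $(\mathcal S,\mathcal T)$ and $(\mathcal U,\mathcal V)$ are cotorsion pairs and $\EE(\mathcal S,\mathcal V)=0$. For a cotorsion pair $(\mathcal U,\mathcal V)$ we need three things: both classes are closed under direct summands; $\EE(\mathcal U,\mathcal V)=0$; and every object $C$ admits $\EE$-triangles $V_C\to U_C\to C\dashrightarrow$ and $C\to V^C\to U^C\dashrightarrow$ with $U_C,U^C\in\mathcal U$ and $V_C,V^C\in\mathcal V$.

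I will treat $(\W,\W^{\bot_1})$ in detail; the pair $({^{\bot_1}}\W,\W)$ is dual. The vanishing $\EE(\W,\W^{\bot_1})=0$ holds by definition. Closure under summands holds for $\W$ by condition (1) of hereditary-like. It holds for $\W^{\bot_1}$ because $\EE$ is additive.

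The main work is producing the approximation triangles. The natural first attempt uses the triangles already given by the definition. For the right triangle, condition (3) gives $W_2\to W_1\xrightarrow{w_C} C\dashrightarrow$. Here $W_1\in\W$ is fine, but we would need $W_2\in\W^{\bot_1}$, which is not given. So I would instead build the approximations from projectives and injectives, using the standard argument for complete cotorsion pairs.

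\begin{itemize}
\item[(a)] First show that $\W^{\bot_1}$ is closed under extensions and contains all injectives. Also, $\W$ contains the projectives by the earlier Lemma, and it is extension closed by hypothesis.
\item[(b)] For $C\in\E$, take the triangle $C\xrightarrow{w^C}W^1\to W^2\dashrightarrow$ from condition (2). I would try to show $W^1\in\W^{\bot_1}$ after modifying it. For this I use $\EE(W,W^1)\to\EE(W,W^2)$ together with the fact that $w^C$ is a left $\W$-approximation, so that $\Hom(W^1,W)\to\Hom(C,W)$ is surjective.
\end{itemize}

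This step is the main obstacle. I do not expect $\W^{\bot_1}$-objects to arise directly from the hereditary-like triangles. The cleanest route uses the earlier Lemma $\W^{\bot_1}\subseteq\W$ together with extension closure, as follows.

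\begin{itemize}
\item[(c)] Take an injective triangle $C\to I\to C'\dashrightarrow$ with $I\in\W^{\bot_1}$. Then apply the right triangle from (3) to $C'$, namely $W_2'\to W_1'\to C'\dashrightarrow$. Pull back along $W_1'\to C'$ using (ET4$^{\rm op}$). This gives a triangle $C\to E\to W_1'\dashrightarrow$ with $W_1'\in\W$, together with a triangle $W_2'\to E\to I\dashrightarrow$. To conclude $E\in\W^{\bot_1}$, one needs $W_2'\in\W^{\bot_1}$. So the real problem is to choose right approximations whose kernels lie in $\W^{\bot_1}$.
\item[(d)] To get such approximations, I use the Krull--Schmidt hypothesis. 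I would show that the second term $W_2$ in (3) can be taken in $\W^{\bot_1}$. Given $\delta\in\EE(W,W_2)$ with $W\in\W$, realize it as $W_2\to X\to W\dashrightarrow$. Then $X\in\W$ by extension closure. Since $w_A$ is a right $\W$-approximation, we should be able to factor through $W_1$ and conclude that $\delta$ lies in the image of $\EE(W,W_2)\to\EE(W,W_2)$ via a map that kills it. If this argument fails, I would fall back on minimal approximations, which exist by Krull--Schmidt.
\end{itemize}

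Once the approximation triangles exist for both pairs, the last axiom is immediate. Indeed, ${^{\bot_1}}\W\subseteq\W$ by the earlier Lemma, so $\EE({^{\bot_1}}\W,\W^{\bot_1})\subseteq\EE(\W,\W^{\bot_1})=0$. This completes the twin cotorsion pair.
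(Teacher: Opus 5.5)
\textbf{There is a genuine gap: your step (d) is where the proof lives, and you do not prove it.} Your overall architecture is the paper's. You use ${^{\bot_1}}\W\subseteq\W$ for the twin condition. You build $C\to E\to W_1'\dashrightarrow$ from an injective triangle $C\to I\to C'\dashrightarrow$ and a right approximation triangle of $C'$ via (ET4$^{\mathrm{op}}$). You then get $E\in\W^{\bot_1}$ from the triangle $W_2'\to E\to I\dashrightarrow$ and extension-closedness of $\W^{\bot_1}$. All of this works only once you have a triangle $K\xrightarrow{k}W_0\xrightarrow{w}C\overset{\eta}{\dashrightarrow}$ with $w$ a right $\W$-approximation and $K\in\W^{\bot_1}$.

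The argument you sketch starts from an arbitrary triangle supplied by condition (3), and for such a triangle the claim is false. Take the direct sum with the split triangle $W\xrightarrow{1}W\to 0\dashrightarrow$, where $W\in\W$. This gives $K\oplus W\to W_0\oplus W\xrightarrow{(w\ 0)}C\dashrightarrow$, which is again of the form in (3), since $(w\ 0)$ is still a right $\W$-approximation. Its cocone lies in $\W^{\bot_1}$ only if $W$ does, which fails as soon as $\EE(\W,W)\neq 0$. So no argument that ignores minimality can succeed, and minimality is not a fallback but the essential ingredient. The sentence about $\delta$ lying in the image of $\EE(W,W_2)\to\EE(W,W_2)$ "via a map that kills it" does not correspond to an actual step.

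\textbf{The missing idea is Wakamatsu's lemma, which is what the paper invokes.} First, as the paper does, use Krull--Schmidt to take $w$ right minimal. Then argue as follows.
\begin{enumerate}
\item Let $\delta\in\EE(W,K)$ with $W\in\W$. Realize $k_*\delta$ by $W_0\xrightarrow{x}Y\to W\dashrightarrow$; then $Y\in\W$ by extension-closedness.
\item Since $w_*(k_*\delta)=(wk)_*\delta=0$, exactness of $\Hom(Y,C)\to\Hom(W_0,C)\to\EE(W,C)$ gives $g\colon Y\to C$ with $gx=w$.
\item As $Y\in\W$, we have $g=wh$ for some $h\colon Y\to W_0$. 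Then $w(hx)=w$, so right minimality makes $hx$ an automorphism. Hence $x$ is a section and $k_*\delta=0$.
\item Exactness of $\Hom(W,C)\to\EE(W,K)\xrightarrow{k_*}\EE(W,W_0)$ gives $\delta=u^*\eta$ for some $u\colon W\to C$. Writing $u=wv$ yields $\delta=v^*(w^*\eta)=0$.
\end{enumerate}
Thus $K\in\W^{\bot_1}$.

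With this lemma, and its dual for left minimal approximations giving cones in ${^{\bot_1}}\W$, your step (c) and its dual go through and the proof is complete. Step (b) and the first half of (d) can then be discarded.
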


\begin{proof}
Since $\W$ is hereditary-like, we have ${^{\bot_1}}\W\subseteq \W$ and $\W^{\bot_1}\subseteq \W$. We show that $(\W,\W^{\bot_1})$ is a cotorsion pair, then dually $({^{\bot_1}}\W,\W)$ is also a cotorsion pair. For any object $A\in \E$, there is an $\EE$-triangle $W_1\to W_0\xrightarrow{w} A\dashrightarrow$ where $W_1,W_0\in \W$ and $w$ is a right $\W$-approximation. Since $\E$ is Krull-Schmidt, we can assume that $w$ is right minimal. Then by Wakamatsu's Lemma, $W_1\in \W^{\bot_1}$. Since $\E$ has enough injectives, $A$ admits an $\EE$-triangle $A\to I\to A_1\dashrightarrow$ where $I$ is injective. $A_1$ admits an $\EE$-triangle $W_1'\to W_0'\xrightarrow{w'} A_1\dashrightarrow$ where $W_1',W_0'\in \W$. We can also assume that $W_1'\in \W^{\bot_1}$. Then we have the following commutative diagram
$$\xymatrix{
&W_1' \ar[d] \ar@{=}[r] &W_1' \ar[d]\\
A \ar[r] \ar@{=}[d] &W_1'\oplus I \ar[r] \ar[d] &W_0' \ar[d] \ar@{-->}[r] &\\
A \ar[r] &I \ar@{-->}[d] \ar[r] &A_1 \ar@{-->}[d] \ar@{-->}[r] &\\
&&
}
$$
where $W_1'\oplus I\in \W^{\bot_1}$. Hence  $(\W,\W^{\bot_1})$ is a cotorsion pair.
\end{proof}

\vspace{2mm}

Let $A\xrightarrow{\underline a} B$ be an epimorphism in $\E/\W$. Fix an $\EE$-triangle $A\xrightarrow{w_A} W^1 \to W^2 \dashrightarrow$
where $W^1,W^2\in \W$ and $w_A$ is a left $\W$-approximation, and an $\EE$-triangle $B\xrightarrow{w_B} W^3 \to W^4 \dashrightarrow$
where $W^3,W^4\in \W$ and $w_B$ is a left $\W$-approximation. Then we obtain the following commutative diagram of $\EE$-triangles
$$\xymatrix{
A \ar[r]^{w_A} \ar[d]_a &W^1 \ar[r] \ar[d]^{w} &W^2 \ar@{=}[d] \ar@{-->}[r] &\\
B \ar[r]^b \ar@{=}[d] &C \ar[r] \ar[d] &W^2 \ar[d] \ar@{-->}[r] &\\
B \ar[r]_{w_B} &W^3 \ar[r]  &W^4 \ar@{-->}[r]&.
}
$$
We have the following lemma.

\begin{lem}\label{L2}
If $\E$ is Krull-Schmidt, then $C\in \W$.
\end{lem}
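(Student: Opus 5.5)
The plan is to show that $1_C$ factors through an object of $\W$. Then $C$ is a retract of an object of $\W$, hence a direct summand, since $\E$ is Krull--Schmidt and so idempotents split. Condition (1) of the definition of a hereditary-like subcategory then gives $C\in\W$.

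Throughout, write $c\colon C\to W^3$ for the middle vertical morphism of the lower part of the diagram, so that $cb=w_B$, and write $p\colon C\to W^2$ for the deflation in the middle $\EE$-triangle.

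\textbf{Step 1: $b$ factors through $w_B$.} The upper left square gives $ba=w\,w_A$, which factors through $W^1\in\W$. Hence $\underline{b}\,\underline{a}=0$ in $\E/\W$. Since $\underline a$ is an epimorphism in $\E/\W$, we get $\underline b=0$, so $b$ factors through some object of $\W$. Because $w_B$ is a left $\W$-approximation, this factorization passes through $w_B$: there is $d\colon W^3\to C$ with $b=d\,w_B$.

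\textbf{Step 2: $1_C-dc$ factors through $p$.} We compute
\[
(1_C-dc)\,b \;=\; b-d\,w_B \;=\; 0 .
\]
Apply the exact sequence $\Hom_{\E}(W^2,C)\to\Hom_{\E}(C,C)\to\Hom_{\E}(B,C)$, obtained by applying $\Hom_{\E}(-,C)$ to the $\EE$-triangle $B\xrightarrow{b}C\xrightarrow{p}W^2\dashrightarrow$. It yields $h\colon W^2\to C$ with $1_C-dc=hp$.

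\textbf{Step 3: conclusion.} From Step 2,
\[
1_C=\svech{d}{h}\svecv{c}{p}\colon C\to W^3\oplus W^2\to C ,
\]
so $C$ is a retract of $W^3\oplus W^2$, which lies in $\W$ since $\W$ is closed under direct sums. As $\E$ is Krull--Schmidt, the idempotent $\svecv{c}{p}\svech{d}{h}$ splits, so $C$ is a direct summand of $W^3\oplus W^2$. Closure of $\W$ under direct summands gives $C\in\W$.

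The main point to check is Step 1: that epimorphisms in $\E/\W$ may be cancelled against compositions vanishing in $\E/\W$, and that a morphism $b$ which factors through some object of $\W$ factors through the fixed approximation $w_B$. Both follow directly from the definitions. The rest is the standard long exact $\Hom$ sequence for $\EE$-triangles.
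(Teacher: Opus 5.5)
Your proposal is correct and follows essentially the same route as the paper. Both arguments factor $b$ through the fixed approximation $w_B$ and compare the $\EE$-triangle on $b$ with the one on $w_B$ to see that $C$ is a direct summand of $W^3\oplus W^2$. Your Step 2 ($1_C-dc$ kills $b$, hence factors through $C\to W^2$) supplies the detail behind the paper's one-line conclusion.
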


\begin{proof}
Since $a$ is an epimorphism in $\E/\W$ and $\underline {ba}=0$, $\underline b=0$, which implies that $b$ factors through a object $W^0\in \W$. Since $w_B$ is a left $\W$-approximation, we have the following commutative diagram
$$\xymatrix{
B \ar[r]^b \ar@{=}[d] &C \ar[r] \ar[d] &W^2 \ar[d] \ar@{-->}[r] &\\
B \ar[r]^{w_B} \ar@{=}[d] &W^3 \ar[r] \ar[d]  &W^4 \ar@{-->}[r] \ar[d] &\\
B \ar[r]^b &C \ar[r]  &W^2 \ar@{-->}[r] &.
}
$$
Then $C$ is a direct summand of $W^2\oplus W^3$, which implies $C\in \W$.
\end{proof}

\begin{cor}\label{L2c}
Any epimorphism $A\xrightarrow{\underline a} B$ in $\E/\W$ admits an $\EE$-triangle $A\xrightarrow{a'} B'\to W\dashrightarrow$ such that
\begin{itemize}
\item[(a)] $a'$ is $\W$-monic;
\item[(b)] $\underline a=\underline a'$;
\item[(c)] $W\in \W$.
\end{itemize}
\end{cor}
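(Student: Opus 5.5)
The idea is to take $a'=\svecv{a}{w_A}\colon A\to B\oplus W^1$ and complete it to an $\EE$-triangle using the diagram constructed before Lemma \ref{L2}. That diagram is an instance of Lemma \ref{L4}, applied to the $\EE$-triangle $A\xrightarrow{w_A}W^1\to W^2\dashrightarrow$ and the morphism $a\colon A\to B$. Lemma \ref{L4} then also provides an $\EE$-triangle
$$A\xrightarrow{\svecv{-a}{w_A}} B\oplus W^1\xrightarrow{\svech{b}{w}} C\dashrightarrow.$$
Composing with the automorphism $\mathrm{diag}(-1,1)$ of $B\oplus W^1$, we obtain an isomorphic $\EE$-triangle
$$A\xrightarrow{a'} B'\to W\dashrightarrow, \qquad a'=\svecv{a}{w_A},\quad B'=B\oplus W^1,\quad W=C.$$
Here we assume $\E$ is Krull--Schmidt, as in Lemma \ref{L2}; this hypothesis is implicit in the corollary.

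The three conditions are then checked as follows.

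For (c), Lemma \ref{L2} gives $C\in\W$ directly, since $\underline a$ is an epimorphism in $\E/\W$.

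For (b), write $a'$ as a morphism $A\to B\oplus W^1$. Identifying $B\oplus W^1\cong B$ in $\E/\W$ (because $W^1\in\W$), the class $\underline{a'}$ corresponds to $\underline a$. More precisely, composing with the projection $B\oplus W^1\to B$, which is an isomorphism in $\E/\W$, gives $\underline a$. Under the identification $B'\cong B$ we therefore have $\underline{a'}=\underline a$.

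For (a), let $f\colon A\to D$ with $D\in\W$. Since $w_A$ is a left $\W$-approximation, $f=hw_A$ for some $h\colon W^1\to D$. Then $f=\svech{0}{h}a'$, so $\Hom(a',D)$ is surjective and $a'$ is $\W$-monic.

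The only delicate point is the bookkeeping in (b), namely that "$\underline a=\underline a'$" is meant up to the canonical isomorphism $B\oplus W^1\cong B$ in $\E/\W$. Everything else follows directly from Lemma \ref{L4} and Lemma \ref{L2}.
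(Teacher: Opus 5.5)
Your proposal is correct and is essentially the paper's own argument. The corollary is read off from the $\EE$-triangle $A\xrightarrow{\svecv{-a}{w_A}} B\oplus W^1\to C\dashrightarrow$ given by Lemma \ref{L4}, with $C\in\W$ by Lemma \ref{L2}, which is exactly how the paper uses it in the proofs of Lemma \ref{L3} and Theorem \ref{ck}. Your sign change, the Krull--Schmidt hypothesis, and the identification $B\oplus W^1\cong B$ in $\E/\W$ used for (b) are precisely the bookkeeping the paper leaves implicit.
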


\begin{lem}\label{L3}
Let $A\xrightarrow{a} B\xrightarrow{b} C'\dashrightarrow$ be an $\EE$-triangle such that $a$ is $\W$-monic. Then the following conditions are equivalent.
\begin{itemize}
\item[(1)] $C'\in \W$.
\item[(2)] $\underline a$ is an epimorphism in $\E/\W$.
\end{itemize}
\end{lem}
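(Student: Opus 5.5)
Both implications follow from the long exact Hom sequence for $\EE$-triangles and the approximation properties in the definition of hereditary-like. Throughout, fix the $\EE$-triangle $A\xrightarrow{a} B\xrightarrow{b} C'\overset{\delta}{\dashrightarrow}$ with $a$ $\W$-monic.

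\textbf{(1)$\Rightarrow$(2).} Assume $C'\in\W$. Let $B\xrightarrow{c}D$ be a morphism with $\underline{ca}=0$; we must show $\underline c=0$. Since $ca$ factors through some object of $\W$, it factors through the left $\W$-approximation $w^A\colon A\to W^1$ from condition (2). Write $ca=s\,w^A$ with $s\colon W^1\to D$. Because $a$ is $\W$-monic, $w^A=t a$ for some $t\colon B\to W^1$. Then $(c-st)a=0$. By exactness of $\Hom(C',D)\to\Hom(B,D)\to\Hom(A,D)$, we get $c-st=u b$ for some $u\colon C'\to D$. Now $st$ factors through $W^1\in\W$ and $ub$ factors through $C'\in\W$, so $\underline c=0$.

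\textbf{(2)$\Rightarrow$(1).} Assume $\underline a$ is an epimorphism in $\E/\W$. Take the $\EE$-triangle $W_2\to W_1\xrightarrow{w_{C'}} C'\overset{\eta}{\dashrightarrow}$ from condition (3), with $w_{C'}$ a right $\W$-approximation. The goal is to show $w_{C'}$ is a retraction; then $C'$ is a summand of $W_1$, so $C'\in\W$ by closure under summands. Since $\underline{ba}=0$ (indeed $ba=0$) and $\underline a$ is epic, $\underline b=0$. Hence $b$ factors through an object of $\W$, so through $w_{C'}$: $b=w_{C'}h$ for some $h\colon B\to W_1$. Therefore $b^*\eta=h^*w_{C'}^*\eta=0$.

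It remains to promote $b^*\eta=0$ to $\eta=0$. From the exact sequence $\Hom(A,W_2)\xrightarrow{\delta^\sharp}\EE(C',W_2)\xrightarrow{b^*}\EE(B,W_2)$, we get $\eta=\delta^\sharp(g)=g_*\delta$ for some $g\colon A\to W_2$. Since $a$ is $\W$-monic and $W_2\in\W$, $g=g'a$ for some $g'\colon B\to W_2$. Then $\eta=g'_*a_*\delta=0$, because $a_*\delta=0$ for any $\EE$-triangle. Thus $\eta$ splits, $w_{C'}$ is a retraction, and $C'\in\W$.

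The step most likely to need care is the last one: the long exact sequences must be invoked correctly (the $\delta^\sharp$ part of \cite{NP}, Corollary 3.12), with the relation $a_*\delta=0$. Krull–Schmidt is not needed in either direction.
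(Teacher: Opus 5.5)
Your proof is correct. The direction (1)$\Rightarrow$(2) is essentially the paper's argument. The paper lifts the given map $A\to W$ directly along the $\W$-monic morphism $a$ instead of first routing it through $w^A$, which is only a cosmetic difference.

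For (2)$\Rightarrow$(1) you take a genuinely different route.

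\textbf{The paper's route.} It appeals to Lemma~\ref{L2}. Pushing out the left $\W$-approximation conflation $A\to W^1\to W^2$ along $a$ gives an $\EE$-triangle $A\xrightarrow{\svecv{-a}{w_A}}B\oplus W^1\to C\dashrightarrow$ with $C\in\W$; this is where the epimorphism hypothesis is used. Then $\W$-monicity of $a$ provides $w'$ with $w'a=w_A$. This gives morphisms of $\EE$-triangles from $A\xrightarrow{a}B\to C'$ to this triangle and back, whose composite is the identity on $A$ and $B$. By \cite[Corollary 3.6]{NP}, $C'$ is then a direct summand of $C$.

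\textbf{Your route.} You use the right $\W$-approximation conflation $W_2\to W_1\xrightarrow{w_{C'}}C'\overset{\eta}{\dashrightarrow}$ and show directly that $\eta=0$. First, $b^*\eta=0$ because $b$ factors through $w_{C'}$. Exactness of $\Hom_{\E}(A,W_2)\to\EE(C',W_2)\to\EE(B,W_2)$ then gives $\eta=g_*\delta$. Finally, $\W$-monicity of $a$ together with $a_*\delta=0$ kills $\eta$.

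\textbf{Comparison.} Your version is shorter and purely homological. As you note, it does not use the Krull--Schmidt property anywhere. The paper's proof formally relies on Lemma~\ref{L2}, which is stated under a Krull--Schmidt hypothesis that the present lemma does not impose. The paper's route has the small advantage of identifying $C'$ as a summand of the specific cone produced in Lemma~\ref{L2}, the same construction reused in the proof of Theorem~\ref{ck}.
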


\begin{proof}
We first show (2) $\Rightarrow$ (1). By Lemma \ref{L2}, $a\colon A\to B$ admits an $\EE$-triangle
$$A\xrightarrow{\svecv{-a}{w_A}} B\oplus W^1\longrightarrow C\dashrightarrow$$
where $W^1\in\X$ and $C\in \W$. Since $a$ is $\W$-monic, there exists
a morphism $w'\colon B\to W^1$ such that $w'a=w_A$.
Thus we have the following commutative diagram
$$\xymatrix{
A \ar[r]^a \ar@{=}[d] &B \ar[r] \ar[d]^{\svecv{-1}{w'}} &C' \ar[d]^g \ar@{-->}[r] &\\
A\ar[r]^{\svecv{-a}{w_A}\quad} \ar@{=}[d] &B\oplus W^1 \ar[r] \ar[d]^{(-1~0)} &C \ar[d]^h \ar@{-->}[r] &\\
A \ar[r]^a &B \ar[r] &C'\ar@{-->}[r] &
}
$$
of $\EE$-triangles. By \cite[Corollary 3.6]{NP},
we have that $hg$ is an isomorphism. Hence $C'$ is a direct summand of $C$, which implies $C'\in \W$.

Now we show (1) $\Rightarrow$ (2). Let $d:B\to D$ be a morphism such that $\underline {da}=0$. Hence $da$ factors through an object $W\in \W$:
$$\xymatrix{
A \ar[r]^a \ar[d]_{w_1} &B \ar[d]^d\\
W \ar[r]_{w_2} &D
}
$$
Since $a$ is $\W$-monic, there is a morphism $w_3:B\to W$ such that $w_3a=w_1$. Hence $(d-w_2w_3)a=0$, which means there exists a morphism $c:C'\to D$ such that $d=w_2w_3+cb$. Thus $\underline d=0$, whence $\underline a$ is an epimorphism in $\E/\W$.
\end{proof}

\begin{defn}\rm
Denote by ${_\mathfrak{e}}\W$ the of the subcategory consisting of the objects $W\in \W$ which admit $\EE$-triangles
$$A\xrightarrow{a} B\xrightarrow{b} W\dashrightarrow$$
where $a$ is left minimal and $\W$-monic. Such $\EE$-triangles are called \defines{epic-$\EE$-triangles}. Dually we can define
the subcategory $\W_{\mathfrak{m}}$ and the corresponding monic-$\EE$-triangles.
\end{defn}

\begin{rem}\rm
By definition, in an epic-$\EE$-triangles we have $A\notin \W$.
\end{rem}

Let $\C,\D$ be subcategories of $\E$. We denote by $\D\backslash \C$ the additive closure of subcategory consisting of the objects in $\D$ which have no direct summand in $\C$.

\begin{lem}\label{right}
Assume that $\E$ is Krull-Schmidt. Let $W_0$ be an indecomposable object in $\W \backslash {_\mathfrak{e}}\W$. Let
$$A\xrightarrow{a} B\xrightarrow{b} W\dashrightarrow$$
be an epic-$\EE$-triangle. Then any morphism $w:W_0\to W$ factors through $b$.
\end{lem}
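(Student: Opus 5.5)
We argue by contradiction: assume some $w\colon W_0\to W$ does not factor through $b$, and use it to build an epic-$\EE$-triangle ending at $W_0$. This would force $W_0\in{_\mathfrak{e}}\W$, which is impossible because $W_0$ is an indecomposable object of $\W\backslash{_\mathfrak{e}}\W$.

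First, pull the triangle back along $w$. Let $\delta\in\EE(W,A)$ be the extension of the given epic-$\EE$-triangle. By the dual of Lemma \ref{L4}, we get an $\EE$-triangle
$$A\xrightarrow{a'}B'\xrightarrow{b'}W_0\overset{w^*\delta}{\dashrightarrow}$$
together with a morphism of $\EE$-triangles $(1_A,g,w)$ into the original one. Here $w$ factors through $b$ if and only if $w^*\delta=0$, that is, if and only if this triangle splits. So our assumption says the pulled-back triangle is non-split.

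Next, check that $a'=ga$-compatible inflation is $\W$-monic. Let $u\colon A\to W'$ with $W'\in\W$. Since $a$ is $\W$-monic, $u=va$ for some $v\colon B\to W'$. Then $u=vga'$ because $ga'=a$. Hence $a'$ is $\W$-monic.

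Then make $a'$ left minimal. Since $\E$ is Krull--Schmidt, we can write $B'=B_1\oplus B_2$ so that $a'=\svecv{a_1}{0}$ with $a_1\colon A\to B_1$ left minimal. Splitting off the trivial summand $0\to B_2\xrightarrow{1}B_2$ then gives an $\EE$-triangle
$$A\xrightarrow{a_1}B_1\to W_0'\dashrightarrow,\qquad W_0\cong W_0'\oplus B_2.$$
This step, making the triangle-level splitting rigorous in the extriangulated setting, is where I expect the main technical work. I would use \cite[Corollary 3.6]{NP} together with the usual argument for direct sums of $\EE$-triangles. The map $a_1$ is still $\W$-monic, because any map $A\to W'$ extends through $B'$ and hence through the summand $B_1$. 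By indecomposability of $W_0$, one of two cases holds.

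In the case $B_2=0$, the triangle ending at $W_0$ has left minimal, $\W$-monic inflation, so it is an epic-$\EE$-triangle. Thus $W_0\in{_\mathfrak{e}}\W$, a contradiction.

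In the case $W_0'=0$, we have $W_0\cong B_2$. Then $a_1$ is an isomorphism and the triangle splits, so $w^*\delta=0$ and $w$ factors through $b$, contradicting our assumption.

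For the case analysis to be clean, the decomposition must be chosen compatibly with the triangle. The second case could also be handled more directly: if $A\to B_1$ is an isomorphism, the pulled-back triangle is split.
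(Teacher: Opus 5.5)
Your proposal is correct and follows the paper's route. The paper also pulls the epic-$\EE$-triangle back along $w$ and notes that the new inflation is still $\W$-monic. It then uses indecomposability of $W_0$: the pulled-back triangle either splits, so $w$ factors through $b$, or has a left minimal inflation, which would force $W_0\in{_\mathfrak{e}}\W$.

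The step you flag as technical is exactly what the paper leaves implicit in ``if the first row does not split, then $a_1$ is left minimal.'' It closes quickly without constructing the triangle $A\to B_1\to W_0'$. Since $p_2a'=0$ for the projection $p_2\colon B_1\oplus B_2\to B_2$, exactness gives $p_2=hb'$ for some $h$. So $b'|_{B_2}$ is a split monomorphism into the indecomposable $W_0$, hence an isomorphism when $B_2\neq 0$. This makes $b'$ a retraction and the triangle split.
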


\begin{proof}
We have the following commutative diagram
$$\xymatrix{
A\ar[r]^{a_0} \ar@{=}[d] &B_0 \ar[r]^{b_0} \ar[d] &W_0 \ar[d]^{w} \ar@{-->}[r] &   \\
A \ar[r]_a &B \ar[r]_b &W \ar@{-->}[r] &
}
$$
Since $a$ is $\W$-monic, so is $a_1$. If the first row does not split, then $a_1$ is left minimal, which implies that $W_0\in {_\mathfrak{e}}\W$, a contradiction. Hence the first row splits and $w$ factors through $b$.
\end{proof}

\begin{cor}\label{rightc}
Assume that $\E$ is Krull-Schmidt. Let $W_0$ be an object in $\W \backslash {_\mathfrak{e}}\W$. Let
$$A\xrightarrow{a} B\xrightarrow{b} W\dashrightarrow$$
be an $\EE$-triangle where $W\in \W$ and $a$ is $\W$-epic. Then any morphism $w:W_0\to W$ factors through $b$.
\end{cor}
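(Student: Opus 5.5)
Corollary \ref{rightc} follows from Lemma \ref{right} by decomposing $W_0$ and using additivity. I read the hypothesis ``$a$ is $\W$-epic'' as ``$a$ is $\W$-monic'', as in the setting of Lemma \ref{right}; the statement does not seem to make sense otherwise. The one new point is that the given $\EE$-triangle need not be an epic-$\EE$-triangle, because $a$ need not be left minimal. The plan is therefore to reduce to the left minimal case and then apply Lemma \ref{right}.

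\textbf{Step 1 (reduction to indecomposables).} Since $\E$ is Krull--Schmidt, write $W_0=\bigoplus_i W_0^i$ with each $W_0^i$ indecomposable. By the definition of $\W\backslash{_\mathfrak{e}}\W$, each $W_0^i$ lies in $\W\backslash{_\mathfrak{e}}\W$. A morphism $w\colon W_0\to W$ factors through $b$ as soon as every restriction $w|_{W_0^i}$ does, since the lifts can be summed. So we may assume $W_0$ is indecomposable.

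\textbf{Step 2 (making $a$ left minimal).} Since $\E$ is Krull--Schmidt, decompose $a\colon A\to B$ as $B\cong B'\oplus B''$ with $a=\svecv{a'}{0}$ and $a'\colon A\to B'$ left minimal. The $\EE$-triangle $A\to B\to W\dashrightarrow$ should then be isomorphic to the direct sum of an $\EE$-triangle $A\xrightarrow{a'}B'\xrightarrow{b'}W'\dashrightarrow$ and the split triangle $0\to B''\xrightarrow{1}B''\dashrightarrow$. Concretely, $W\cong W'\oplus B''$ and $b$ corresponds to $b'\oplus 1_{B''}$. This uses the standard fact that the cone of an inflation is determined up to isomorphism, as in \cite[Corollary 3.6]{NP}.

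Several properties then follow. Since $W'$ is a direct summand of $W\in\W$, we have $W'\in\W$. The map $a'$ is $\W$-monic because $a$ is: any $A\to D$ with $D\in\W$ extends along $a$, and restricting that extension to the summand $B'$ extends it along $a'$.

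\textbf{Step 3 (applying Lemma \ref{right}).} Write $w=\svecv{w'}{w''}$ with respect to $W\cong W'\oplus B''$. The component $w''\colon W_0\to B''$ trivially factors through $1_{B''}$. For the component $w'$ there are two cases.

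\begin{itemize}
\item If $A\to B'\to W'\dashrightarrow$ is non-split, it is an epic-$\EE$-triangle. Lemma \ref{right} then gives the factorization of $w'$ through $b'$.
\item If it splits, then $b'$ is a retraction, so $w'$ factors through $b'$ trivially. (Left minimality actually forces $A=0$ in this case.)
\end{itemize}

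Combining the two components, $w$ factors through $b$.

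The main obstacle is Step 2: checking that removing the non-minimal summand of $a$ really splits off a summand $B''\to B''$ of the $\EE$-triangle, compatibly with $b$. I would handle this by comparing the triangle with the direct sum triangle, using (ET2) and additivity of realizations, and then invoking \cite[Corollary 3.6]{NP} to get an isomorphism on third terms that commutes with the deflations.
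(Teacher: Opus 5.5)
Your proposal is correct and follows the paper's proof. Both reduce to indecomposable $W_0$, use Krull--Schmidt to split the triangle as the direct sum of $A\xrightarrow{a'}B'\xrightarrow{b'}W_1\dashrightarrow$ (with $a'$ left minimal) and the identity on $W_2$, apply Lemma \ref{right} to the first summand, and lift the $W_2$-component identically; your reading of ``$\W$-epic'' as ``$\W$-monic'' is what the paper's argument actually uses. One harmless slip: in your split case, left minimality forces $a'$ to be an isomorphism (so $W'=0$), not $A=0$, but that parenthetical remark is not used anywhere.
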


\begin{proof}
For convenience, we can assume that $W_0$ is indecomposable. Since $\E$ is Krull-Schmidt, we can rewrite the $\EE$-triangle:
$$A\xrightarrow{a=\svecv{a'}{0}} B'\oplus W_2\xrightarrow{b=\left(\begin{smallmatrix}b'&0\\0&1\end{smallmatrix}\right)} W_1\oplus W_2\dashrightarrow$$
where $a'$ is left minimal. Hence we can get an epic-$\EE$-triangle
$$A\xrightarrow{a'} B'\xrightarrow{b'} W_1\dashrightarrow$$
By Lemma \ref{right}, for any morphism $w_1:W_0\to W_1$, there exists a morphism $v_1:W_0\to B$ such that $w_1=b'v_1$. Hence for any morphism $W_0\xrightarrow{\svecv{w_1}{w_2}} W_1\oplus W_2$, we have the following commutative diagram:
$$\xymatrix{
&&W\ar[d]^{\svecv{w_1}{w_2}} \ar[ld]_-{\svecv{v_1}{w_2}}\\
A \ar[r]_-{\svecv{a'}{0}} &B'\oplus W_2 \ar[r]_-{\left(\begin{smallmatrix}b'&0\\0&1\end{smallmatrix}\right)} &W_1\oplus W_2 \ar@{-->}[r] &
}
$$
\end{proof}

\begin{lem}\label{sum}
If $\E$ is Krull-Schmidt, then ${_\mathfrak{e}}\W$ and $\W_{\mathfrak{m}}$ are closed under direct sums and direct summands.
\end{lem}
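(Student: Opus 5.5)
We prove the statement for ${_\mathfrak{e}}\W$; the case of $\W_{\mathfrak{m}}$ is dual. Each part splits an epic-$\EE$-triangle into simpler pieces, with Krull--Schmidt supplying the decompositions and minimality.

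\emph{Direct sums.} Let $W,W'\in{_\mathfrak{e}}\W$ with epic-$\EE$-triangles $A\xrightarrow{a}B\xrightarrow{b}W\dashrightarrow$ and $A'\xrightarrow{a'}B'\xrightarrow{b'}W'\dashrightarrow$. Their direct sum $A\oplus A'\xrightarrow{a\oplus a'}B\oplus B'\to W\oplus W'\dashrightarrow$ is an $\EE$-triangle, since $\mathfrak{s}$ is additive.
\begin{itemize}
\item[(i)] $a\oplus a'$ is $\W$-monic: a map $A\oplus A'\to W_0$ is a row of two maps, and each extends along $a$ or $a'$ separately.
\item[(ii)] $a\oplus a'$ is left minimal: if $\phi\in\operatorname{End}(B\oplus B')$ satisfies $\phi(a\oplus a')=a\oplus a'$, the diagonal entries satisfy $\phi_{11}a=a$ and $\phi_{22}a'=a'$, so they are automorphisms. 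The off-diagonal entries satisfy $\phi_{21}a=0$ and $\phi_{12}a'=0$, so $\phi_{21}$ factors through $b$ and $\phi_{12}$ through $b'$.
\end{itemize}
The cleanest route for (ii) is the Krull--Schmidt characterisation: $a$ is left minimal if and only if no nonzero direct summand $B_1$ of $B$ has $a$ vanishing into it, i.e.\ $a$ does not have the form $\svecv{a_1}{0}$ with $B_1\neq 0$. Suppose $a\oplus a'$ had such a summand $X$. Since $a$ and $a'$ are left minimal, $X$ cannot lie inside $B$ or inside $B'$, but in general it need not split compatibly, so I would argue directly. The decomposition gives a retraction $p\colon B\oplus B'\to X$ with $p(a\oplus a')=0$. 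Then $p|_B\,a=0$ and $p|_{B'}\,a'=0$, so $p|_B=uq\,b$ and $p|_{B'}=u'b'$ factor through $W$ and $W'$ respectively. Hence $1_X$ factors through $b\oplus b'$, so $X$ is a direct summand of $W\oplus W'$ and lies in $\W$. The summand $X$ then appears in $B\oplus B'$ with $a\oplus a'$ vanishing into it, and the $\EE$-triangle splits off the trivial piece $0\to X\xrightarrow{=}X$. This is the main obstacle: from here one must compare indecomposable summands via Krull--Schmidt to force $X$ to correspond to summands of $B$ or $B'$ killed by $a$ or $a'$, contradicting their minimality. If this comparison gets messy, I would instead take the left minimal version of $a\oplus a'$, as in the proof of Corollary~\ref{rightc}. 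That rewrites the triangle as $A\oplus A'\to B''\to W''\dashrightarrow$ with $W\oplus W'\cong W''\oplus X$. Since $X\in\W$ and we want $W\oplus W'\in{_\mathfrak{e}}\W$, this route requires additionally that $X=0$ or that $X$ itself lies in ${_\mathfrak{e}}\W$.

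\emph{Direct summands.} Let $W=W_1\oplus W_2\in{_\mathfrak{e}}\W$ with epic-$\EE$-triangle $A\xrightarrow{a}B\xrightarrow{b}W\dashrightarrow$ realizing $\delta$. Pull back along the inclusion $\iota_1\colon W_1\to W$ to obtain an $\EE$-triangle $A\xrightarrow{a_1}B_1\xrightarrow{b_1}W_1\dashrightarrow$ realizing $\iota_1^*\delta$, together with a morphism $B_1\to B$ over the identity of $A$. Hence $a$ factors through $a_1$, so $a_1$ is $\W$-monic because $a$ is. Now replace $a_1$ by its left minimal version as in the proof of Corollary~\ref{rightc}, splitting off a trivial summand $W_1''$ of $W_1$ and leaving an epic-$\EE$-triangle ending in $W_1'$ with $W_1=W_1'\oplus W_1''$; unless that triangle is split with $A=0$, this shows $W_1'\in{_\mathfrak{e}}\W$. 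It remains to show $W_1''=0$, i.e.\ that $a_1$ is already left minimal. Suppose $W_1''\neq 0$. Then $\iota_1^*\delta$ vanishes on $W_1''$, so $\delta$ restricted to $W_1''\subseteq W$ is zero, and the identity of $W_1''$ lifts through $b$. A summand of $W$ lifting through $b$ makes $B\cong B'\oplus W_1''$ with $a$ vanishing into the $W_1''$ component, contradicting the left minimality of $a$. The same Krull--Schmidt summand argument thus handles both parts, and I expect that step to be the only delicate one.
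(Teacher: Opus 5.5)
\textbf{The gap: left minimality of $a\oplus a'$ is never proved.} In the direct-sum part you do not establish that $a\oplus a'$ is left minimal. You drop the matrix computation just before it closes. Your summand route stops at ``one must compare indecomposable summands''. Your fallback explicitly needs $X=0$ or $X\in{_\mathfrak{e}}\W$, which is the very thing at stake. The paper states this step without justification, but it has to be supplied, and it is short.

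From $\phi(a\oplus a')=a\oplus a'$ you already have $\phi_{11}a=a$, $\phi_{12}a'=0$, $\phi_{21}a=0$ and $\phi_{22}a'=a'$, so $\phi_{11}$ is invertible. Set $S=\phi_{22}-\phi_{21}\phi_{11}^{-1}\phi_{12}$. Then $Sa'=a'$, so $S$ is an automorphism by left minimality of $a'$. Hence
\[
\phi=\begin{pmatrix}1&0\\ \phi_{21}\phi_{11}^{-1}&1\end{pmatrix}\begin{pmatrix}\phi_{11}&0\\ 0&S\end{pmatrix}\begin{pmatrix}1&\phi_{11}^{-1}\phi_{12}\\ 0&1\end{pmatrix}
\]
is invertible. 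This argument does not even need Krull--Schmidt.

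If you prefer to finish your summand route, take $X$ indecomposable, with retraction $p=(p_1\ p_2)$ and section $s=\svecv{s_1}{s_2}$. Then $p_1s_1+p_2s_2=1_X$ in the local ring $\operatorname{End}(X)$, so one term, say $p_1s_1$, is invertible. This makes $X$ a nonzero summand of $B$ (via $s_1$ and $(p_1s_1)^{-1}p_1$) with $p_1a=0$, contradicting left minimality of $a$. So the ``need not split compatibly'' worry never arises. Showing $X\in\W$ plays no role.

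\textbf{The direct-summand part is correct.} Your pull-back along $\iota_1$ is exactly the proof of Lemma~\ref{right} inlined. Your closing step is the same contradiction the paper draws: a nonzero summand of $W$ lifting through $b$ gives a summand of $B$ on which $a$ vanishes. The difference is that the paper argues only for an indecomposable summand $W^1\notin{_\mathfrak{e}}\W$ via Lemma~\ref{right}, whereas you handle an arbitrary summand directly. The caveat ``unless that triangle is split with $A=0$'' is unnecessary: in that case minimality forces $W_1'=0$.
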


\begin{proof}
Let $W_1,W_2\in {_\mathfrak{e}}\W$. Then $W_i~(i=1,2)$ admits an epic-$\EE$-triangle
$$A_i\xrightarrow{a_i} B_i\xrightarrow{b_i} W_i\dashrightarrow.$$
The direct sum of these two epic-$\EE$-triangles
$$A_1\oplus A_2 \xrightarrow{\left(\begin{smallmatrix}a_1&0\\0&a_2\end{smallmatrix}\right)} B_1\oplus B_2\xrightarrow{\left(\begin{smallmatrix}b_1&0\\0&b_2\end{smallmatrix}\right)} W_1\oplus W_2 \dashrightarrow$$
is still an epic-$\EE$-triangle. Hence $W_1\oplus W_2\in {_\mathfrak{e}}\W$.

Let $W=W^1\oplus W^2\in {_\mathfrak{e}}\W$, where $W^1$ is indecomposable. Then $W$ admits an epic-$\EE$-triangle
$$A\xrightarrow{a} B\xrightarrow{\svecv{b^1}{b^2}} W^1\oplus W^2\dashrightarrow.$$
If $W^1\in \W \backslash {_\mathfrak{e}}\W$, by Lemma \ref{right}, $W^1\xrightarrow{\svecv{1}{0}} W^1\oplus W^2$ factors through $\svecv{b^1}{b^2}$. Hence $W^1$ is a direct summand of $B$. Let $B=W^1\oplus B^1$, the epic-$\EE$-triangle can be rewritten in the following way:
$$A\xrightarrow{a=\svecv{a_1}{a_2}} W^1\oplus B^1 \xrightarrow{\left(\begin{smallmatrix}1&{b_1}'\\0&{b_2}'\end{smallmatrix}\right)} W^1\oplus W^2\dashrightarrow.$$
Then we have the following commutative diagram
$$\xymatrix{
A\ar[r]^-{\svecv{a_1}{a_2}} \ar@{=}[d] &W^1\oplus B^1 \ar[d]^{\left(\begin{smallmatrix}1&{b_1}'\\0&1\end{smallmatrix}\right)}_{\simeq} \ar[r]^{\left(\begin{smallmatrix}1&{b_1}'\\0&{b_2}'\end{smallmatrix}\right)} &W^1\oplus W^2\ar@{=}[d] \ar@{-->}[r] &\\
A\ar[r]_-{\svecv{0}{a_2}}  &W^1\oplus B^1  \ar[r]_{\left(\begin{smallmatrix}1&0\\0&{b_2}'\end{smallmatrix}\right)} &W^1\oplus W^2 \ar@{-->}[r] &
}
$$
which implies that the second row is also an epic-$\EE$-triangle. But ${\svecv{0}{a_2}}$ is not left minimal, a contradiction. Hence $W^1\in {_\mathfrak{e}}\W$.
\end{proof}

For convenience, denote $\EE_{\W\backslash {_\mathfrak{e}}\W}$ by $\EE_{\mathfrak{e}}$ and $\EE^{\W\backslash \W_\mathfrak{m}}$ by $\EE^{\mathfrak{m}}$.

\begin{prop}\label{here}
Assume that $\E$ is Krull-Schmidt. Then $\W$ is hereditary-like in the following extriangulated subcategories:
\begin{itemize}
\item[(1)] $(\E,\EE_{\mathfrak{e}},\mathfrak{s}|_{\EE_{\mathfrak{e}}})$,
\item[(2)] $(\E,\EE^{\mathfrak{m}},\mathfrak{s}|_{\EE_{\mathfrak{m}}})$.
\end{itemize}
\end{prop}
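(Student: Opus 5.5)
The plan is to keep the same triangles used in the definition of hereditary-like for $(\E,\EE,\mathfrak{s})$, and to check that each of them already lies in the relative structure. The relative structure changes only which $\EE$-triangles are allowed; morphisms are unchanged. So the notions of left and right $\W$-approximation are the same as before, and condition (1) of the definition holds trivially. By Proposition \ref{main1}, $(\E,\EE_{\mathfrak{e}},\mathfrak{s}|_{\EE_{\mathfrak{e}}})$ and $(\E,\EE^{\mathfrak{m}},\mathfrak{s}|_{\EE^{\mathfrak{m}}})$ are extriangulated, so it remains to verify conditions (2) and (3) in each.

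For (1), recall that an $\EE$-triangle $X\to Y\xrightarrow{y} Z\dashrightarrow$ is an $\EE_{\mathfrak{e}}$-triangle exactly when $y$ is $(\W\backslash{_\mathfrak{e}}\W)$-epic.

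\emph{Condition (3).} Take the triangle $W_2\to W_1\xrightarrow{w_A}A\dashrightarrow$ supplied by the hereditary-like property of $\W$ in $\E$. Since $w_A$ is a right $\W$-approximation, it is $\W$-epic, and in particular $(\W\backslash{_\mathfrak{e}}\W)$-epic. So this triangle is an $\EE_{\mathfrak{e}}$-triangle.

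\emph{Condition (2).} Take $A\xrightarrow{w^A}W^1\xrightarrow{v}W^2\dashrightarrow$ with $w^A$ a left $\W$-approximation, so $w^A$ is $\W$-monic and $W^2\in\W$. Corollary \ref{rightc} applies to this triangle; its hypothesis on $a$ should be read as ``$\W$-monic'', which is what its proof uses, via Lemma \ref{right}. It says that every morphism $W_0\to W^2$ with $W_0\in\W\backslash{_\mathfrak{e}}\W$ factors through $v$. Thus $v$ is $(\W\backslash{_\mathfrak{e}}\W)$-epic and the triangle belongs to $\EE_{\mathfrak{e}}$. This proves (1).

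For (2), the argument is dual, with $\EE^{\mathfrak{m}}=\EE^{\W\backslash\W_{\mathfrak{m}}}$, whose triangles are those with $(\W\backslash\W_{\mathfrak{m}})$-monic inflation.

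\emph{Condition (2).} In the triangle $A\xrightarrow{w^A}W^1\to W^2\dashrightarrow$, the inflation $w^A$ is a left $\W$-approximation, hence $\W$-monic. So this triangle is an $\EE^{\mathfrak{m}}$-triangle automatically.

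\emph{Condition (3).} For $W_2\xrightarrow{u}W_1\xrightarrow{w_A}A\dashrightarrow$, we need $u$ to be $(\W\backslash\W_{\mathfrak{m}})$-monic. This requires the duals of Lemma \ref{right} and Corollary \ref{rightc}, which I will state and prove by reversing arrows. The dual of Lemma \ref{right} runs as follows. Let $W\to B\xrightarrow{b}A$ be a monic-$\EE$-triangle, meaning $b$ is right minimal and $\W$-epic, and let $W_0\in\W\backslash\W_{\mathfrak{m}}$ be indecomposable with a morphism $w:W\to W_0$. Form the triangle realizing $w_*$ of the extension. Its deflation is still $\W$-epic. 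If that triangle were non-split, it would exhibit $W_0\in\W_{\mathfrak{m}}$, a contradiction. Hence it splits, and $w$ factors through the inflation. The dual of Corollary \ref{rightc} then follows by splitting off the non-minimal part using Krull--Schmidt, exactly as in the proof given for Corollary \ref{rightc}. Applying it to $w_A$, which is $\W$-epic with $W_2\in\W$, shows that $u$ is $(\W\backslash\W_{\mathfrak{m}})$-monic.

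The main obstacle is this dual step, specifically the non-split case. One must check that the triangle realizing $w_*\delta$ still has a $\W$-epic deflation and can be taken right minimal, so that it is genuinely a monic-$\EE$-triangle ending at $W_0$. This uses that $\W$-epicness passes along the morphism of triangles of Lemma \ref{L4}, and that a non-split triangle with indecomposable end term $W_0$ can be reduced to a right minimal one by Krull--Schmidt. This is the same point that is used, somewhat tersely, in Lemma \ref{right}, and I will spell it out there.
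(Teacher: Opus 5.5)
Your proposal is correct and follows the paper's proof. The right-approximation triangle lies in $\EE_{\mathfrak{e}}$ because its deflation is $\W$-epic, and the left-approximation triangle lies there by Lemma \ref{right}; you use Corollary \ref{rightc} instead, correctly reading its hypothesis as $\W$-monic, which spares you the paper's preliminary passage to minimal approximations of indecomposable $A\notin\W$. Part (2) is the dual, and your worry about right minimality there is unnecessary: a non-split triangle $W_0\to B_0\xrightarrow{b_0}A$ with $W_0$ indecomposable is automatically right minimal by the Krull--Schmidt splitting argument, so the original triangle need not be minimal.
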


\begin{proof}
For any indecomposable object $A\notin \W$, $A$ admits an $\EE$-triangle $W_2\to W_1\xrightarrow{w} A\overset{\delta}\dashrightarrow$ where $W_1,W_2\in\W$ and $w$ is a minimal right $\W$-approximation. By definition $\delta\in \EE_{\mathfrak{e}}(A,W_2)$. On the other hand, $A$ admits an $\EE$-triangle $A\xrightarrow{w'} W^1\to W^2 \overset{\sigma}\dashrightarrow$ where $W^1,W^2\in\W$ and $w'$ is a minimal left $\W$-approximation. By Lemma \ref{right}, $\sigma\in \EE_{\mathfrak{e}}(W^2,A)$. Hence by Proposition \ref{main1},  $\W$ is hereditary-like in the extriangulated subcategory $(\E,\EE_{\mathfrak{e}},\mathfrak{s}|_{\EE_{\mathfrak{e}}})$. By the dual of Lemma \ref{right}, we can get that $\W$ is hereditary-like in the extriangulated subcategory  $(\E,\EE^{\mathfrak{m}},\mathfrak{s}|_{\EE_{\mathfrak{m}}})$.
\end{proof}



\section{quasi-abelian quotients}


By Lemma \cite[Lemma 3.12]{HH}, if $A\xrightarrow{a} B\xrightarrow{b} C\dashrightarrow$ is an $\EE_{\W}^{\W}$-triangle, then $A\xrightarrow{\underline a} B\xrightarrow{\underline b} C$ is a kernel-cokernel pair in $\E/\W$. Since the ``$\EE$-triangles" in $\E/\W$ are just the images of $\EE_{\W}^{\W}$-triangles, by \cite[Corollary 3.18]{NP}, $\E/\W$ has an exact category structure in the sense of \cite{Bu} and the class of short exact sequences is just the image of the all the $\EE_{\W}^{\W}$-triangles, we denote it by $\mathfrak{S}$. Thus $(\E/\W,\mathfrak{S})$ is an exact category.

\begin{thm}\label{ck}
$(\E/\W,\mathfrak{S})$ is the largest exact category structure on $\E/\W$ in the sense that any kernel-cokernel pair in $\E/\W$ belongs to $\mathfrak{S}$.
\end{thm}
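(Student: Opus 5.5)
Given a kernel-cokernel pair $A\xrightarrow{\underline f} B\xrightarrow{\underline g} C$ in $\E/\W$, I will produce an $\EE^{\W}_{\W}$-triangle whose image in $\E/\W$ is isomorphic to this pair. The basic input is Corollary \ref{L2c} together with Lemma \ref{L3}, applied twice: once on the cokernel side and once, dually, on the kernel side.

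\textbf{Step 1: the cokernel side.} Since $\underline g$ is a cokernel, it is an epimorphism in $\E/\W$. By Corollary \ref{L2c} we may replace $g$, without changing $\underline g$, by a morphism lying in an $\EE$-triangle $B\xrightarrow{g'} C'\to W\dashrightarrow$ with $g'$ $\W$-monic and $W\in\W$. This is not yet a deflation. Instead I use the right approximation triangle $W_2\to W_1\xrightarrow{w_C} C\dashrightarrow$ from condition (3) and consider
\[
(g\ \ w_C)\colon B\oplus W_1\to C .
\]
This map is $\W$-epic, because $w_C$ is a right $\W$-approximation. It is a deflation in $\E$, by the standard argument from (ET4)$^{\rm op}$ applied to the composite of the split projection with $w_C$; equivalently, one takes the homotopy pullback of the triangle for $w_C$ along $g$. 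This yields an $\EE$-triangle
\[
K\xrightarrow{k} B\oplus W_1\xrightarrow{(g\ w_C)} C\dashrightarrow .
\]
Since $B\oplus W_1\cong B$ in $\E/\W$, the image of this triangle is $\underline k$ followed by $\underline g$.

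\textbf{Step 2: make the inflation $\W$-monic.} By the dual of the construction above, using the left approximation $w^K\colon K\to W^1$, I replace $k$ by
\[
\svecv{k}{w^K}\colon K\to B\oplus W_1\oplus W^1 .
\]
This map is $\W$-monic. The new middle term is isomorphic to $B$ in the quotient, and the new third term is $C\oplus W'$ for some $W'\in\W$. I need to check that the deflation stays $\W$-epic. I will do this with Lemma \ref{L4} and the weak push-out/pull-back properties, and it should follow because the added summands are in $\W$. The result is an $\EE^{\W}_{\W}$-triangle $K\to B'\to C''$ whose image is $\underline K\to B\to C$, and it lies in $\mathfrak S$.

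\textbf{Step 3: compare with the given kernel.} By \cite[Lemma 3.12]{HH}, $\underline K\to B$ is a kernel of $\underline g$ in $\E/\W$. Since $\underline f$ is also a kernel of $\underline g$, the two kernels agree up to a unique isomorphism $A\cong K$ in $\E/\W$ compatible with the maps to $B$. So the given pair is isomorphic to a sequence in $\mathfrak S$, and $\mathfrak S$ is closed under isomorphism, which proves the claim.

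\textbf{Main obstacle.} I expect the hardest part to be Step 2: controlling $\W$-epicness and $\W$-monicness at the same time when modifying the triangle. The difficulty is that adjoining $w^K$ changes the third term, and I must verify that the new third term still maps $\W$-epically. The first thing I would try is to argue via the octahedral axiom (ET4) that the new third term is $C\oplus W^1$ up to a summand in $\W$. Once that holds, $\W$-epicness follows from $w_C$ being a right approximation. If this breaks down, the fallback is Lemma \ref{L3}, using the fact that $\underline g$ is an epimorphism, to control the $\W$-part.
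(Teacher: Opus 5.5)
\textbf{There is a genuine gap in Step 2, and Step 3 inherits it.} Your overall strategy can be made to work, but not by the route you propose for the third term. Apply Lemma \ref{L4} to your Step 1 triangle $K\xrightarrow{k}B\oplus W_1\xrightarrow{(g\ w_C)}C\overset{\delta}{\dashrightarrow}$ and to $w^K$. The new third term $E'$ then sits in an $\EE$-triangle $W^1\to E'\xrightarrow{e}C\dashrightarrow$ realizing $(w^K)_*\delta$, and the new triangle realizes $e^*\delta$.

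\emph{Why the claim is not formal.} $(w^K)_*\delta=0$ holds exactly when $k$ was already $\W$-monic. So your claim that $E'\cong C\oplus W'$ is equivalent to saying Step 2 was unnecessary. No (ET4) manipulation can give this, because it depends on $\underline g$ being a cokernel and not merely an epimorphism. To see this, run your two steps on a non-invertible bimorphism $\underline g$; these exist whenever the semi-abelian category $\E/\W$ is not abelian. Then $\underline k$ is monic (see below) and $\underline g\,\underline k=0$, so $\underline k=0$ and $K\in\W$. You may take $w^K=1_K$, and then $E'$ is just the middle term $B\oplus W_1$ of $\delta$, with $\underline e=\underline g$ not invertible. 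So the Lemma \ref{L3} fallback, which uses only that $\underline g$ is epic, cannot close the gap. Your proposal never uses that $\underline g$ is a cokernel of $\underline f$.

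\emph{Why Step 3 is circular.} Applied to your final $\EE^{\W}_{\W}$-triangle, \cite[Lemma 3.12]{HH} says $\underline k$ is a kernel of $\underline{c'}\colon B\to E'$. That is a kernel of $\underline g$ only once $E'\cong C$ under $B$ is known. Even after matching kernels, the sequence you have exhibited in $\mathfrak S$ is $K\to B\to E'$, not $K\to B\to C$.

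\textbf{How to repair it.} Identify the ends by universal properties instead of computing objects in $\E$.
\begin{enumerate}
\item The Step 2 deflation is $\W$-epic because $(eh)^*\delta=0$ for every $h\colon W\to E'$, not because the added summands lie in $\W$. Hence the Step 2 triangle lies in $\EE^{\W}_{\W}$. So $\underline{c'}$ is a cokernel of $\underline k$, and $\underline k$ is a kernel, in particular monic.
\item The $\W$-epicness of $(g\ w_C)$ makes $\underline k$ a weak kernel of $\underline g$, by the dual of the argument in Lemma \ref{L3}. A monic weak kernel is a kernel, so $K\cong A$ over $B$.
\item Now $\underline g$ and $\underline{c'}$ are both cokernels of $\underline f\cong\underline k$. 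This is where the cokernel hypothesis enters, and it gives $E'\cong C$ under $B$. A posteriori $\underline e$ is invertible, which forces $(w^K)_*\delta=0$; so your claim about $E'$ is true, but only as a consequence of this argument.
\end{enumerate}

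\textbf{Comparison with the paper.} After this repair your argument is exactly the dual of the paper's. The paper first pushes the left approximation of $A$ out along $a$, obtaining a $\W$-monic triangle whose third term $C'$ is a cokernel of $\underline a$, hence $C'\cong C$. It then adjoins a right approximation of $C'$. This keeps $C'$ fixed, changes the first term to $A'$, and preserves $\W$-monicity because the new extension is a pushforward of the old one. Finally $A'\cong A$ by uniqueness of kernels.
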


\begin{proof}
As noted above, $(\E/\W,\mathfrak{S})$ is an exact category. Let $A\xrightarrow{\underline a} B\xrightarrow{\underline b} C$ be a kernel-cokernel pair in $\E/\W$. $A$ admits an $\EE$-triangle $A\xrightarrow{w^1} W^1 \to W^2 \dashrightarrow$
with $W^1,W^2\in \W$ and $w^1$ is a left $\W$-approximation. Then we have the following commutative diagram of $\EE$-triangles
$$\xymatrix{
A\ar[r]^{w^1} \ar[d]_a &W^1 \ar[r] \ar[d]^{w} &W^2 \ar@{=}[d] \ar@{-->}[r] &\\
B\ar[r]_{b'} &C' \ar[r] &W^2 \ar@{-->}[r] &.
}
$$
By Lemma \ref{L4}, we can choose morphism $m$ to make an $\EE$-triangle
$$A \xrightarrow{\svecv{-a}{w^1}} B\oplus W^1 \xrightarrow{\svech{b'}{w}} C' \dashrightarrow.$$
Since $w^1$ is a left $\W$-approximation, $\svecv{-a}{w^1}$ is $\W$-monic. By \cite[Lemma 3.12]{HH}, $\underline b':B\to C'$ is the cokernel of $\underline a$. Then we have the following commutative diagram
$$\xymatrix{
A \ar[r]^{\underline a} \ar@{=}[d] &B \ar[r]^{\underline b} \ar@{=}[d] &C \ar[d]^{\simeq}\\
A \ar[r]^{\underline a}  &B \ar[r]^{\underline b'}  &C'.
}
$$
Since $C$ admits an $\EE$-triangle $W_2 \to W_1\xrightarrow{w'} C \dashrightarrow$ where $W_1,W_2\in \W$ and $w'$ is a right $\W$-approximation. Then we can get the following commutative diagram
$$\xymatrix{
W_2 \ar@{=}[r] \ar[d] &W_2 \ar[d]\\
A' \ar[r] \ar[d]_{a'} &W_1 \ar[r] \ar[d]^{w'} &W^2 \ar@{=}[d] \ar@{-->}[r] &\\
B \ar[r]_{b'} \ar@{-->}[d]  &C' \ar[r] \ar@{-->}[d] &W^2\ar@{-->}[r] &.\\
&&
}
$$
By the dual of Lemma \ref{L4}, we can get an $\EE$-triangle
$$A' \xrightarrow{\svecv{a'}{w_1}} B\oplus W_1 \xrightarrow{\svech{-b'}{w'}} C' \dashrightarrow.$$
Since $w'$ is a right $\W$-approximation, $\svech{-b'}{w'}$ is $\W$-epic. Then we get the following commutative diagram
$$\xymatrix{
A \ar[r]^-{\svecv{-a}{w^1}} \ar[d] &B\oplus W^1 \ar[r]^-{\svech{b'}{w}} \ar[d]^{\left(\begin{smallmatrix}-1&0\\0&m_0\end{smallmatrix}\right)} &C' \ar@{=}[d] \ar@{-->}[r]&\\
A' \ar[r]_-{\svecv{a'}{w_1}} &B\oplus W_1 \ar[r]_-{\svech{-b'}{w'}} &C' \ar@{-->}[r]&.
}
$$
By Lemma \ref{L4}, we can replace morphism $\left(\begin{smallmatrix}-1&0\\0&m_0\end{smallmatrix}\right)$ by a morphism $\alpha$ to make the left square a weak push-out. Then $\svecv{-a}{w^1}$ is $\W$-monic implies that $\svecv{a'}{w_1}$ is also $\W$-monic. Hence second row is an $\EE_{\W}^{\W}$-triangle, which implies that $A'\xrightarrow{\underline a'} B\xrightarrow{\underline b'}C'$ is a short exact sequence in $\mathfrak{S}$. Then we have the following commutative diagram
$$\xymatrix{
A \ar[r]^{\underline a} \ar[d]^{\simeq} &B \ar[r]^{\underline b'} \ar@{=}[d] &C' \ar@{=}[d]\\
A' \ar[r]^{\underline a'}  &B \ar[r]^{\underline b'}  &C'.
}
$$
Hence $A\xrightarrow{\underline a} B\xrightarrow{\underline b} C$ is isomorphic to a short exact sequence in $\mathfrak{S}$, which implies that itself is a short exact sequence in $\mathfrak{S}$.
\end{proof}

Now we can show the following theorem.

\begin{thm}\label{main}
$\E/\W$ is quasi-abelian.
\end{thm}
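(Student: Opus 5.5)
The plan is to combine Theorem \ref{ck} with a known characterization of quasi-abelian categories. An additive category with kernels and cokernels is quasi-abelian if and only if the class of all kernel-cokernel pairs forms an exact structure. By Theorem \ref{ck}, the class of all kernel-cokernel pairs in $\E/\W$ coincides with $\mathfrak{S}$, which is an exact structure. So it remains only to show that $\E/\W$ has kernels and cokernels; stability of kernels under pushouts and of cokernels under pullbacks then follows from the exact category axioms applied to $\mathfrak{S}$.

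\textbf{Cokernels.} Let $\underline a\colon A\to B$ be a morphism and take the $\EE$-triangle $A\xrightarrow{w^A} W^1\to W^2\dashrightarrow$ from condition (2) of hereditary-like. Push it out along $a$ to get an $\EE$-triangle $B\xrightarrow{b'} C'\to W^2\dashrightarrow$. Lemma \ref{L4} then gives an $\EE$-triangle
\[
A\xrightarrow{\svecv{-a}{w^A}} B\oplus W^1\xrightarrow{\svech{b'}{w}} C'\dashrightarrow .
\]
Its first map is $\W$-monic because $w^A$ is a left $\W$-approximation. By \cite[Lemma 3.12]{HH}, as in the proof of Theorem \ref{ck}, $\underline{b'}$ is the cokernel of $\underline a$.

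\textbf{Kernels.} Dually, use condition (3) for $B$: take $W_2\to W_1\xrightarrow{w_B} B\dashrightarrow$ and pull it back along $a$. This gives an $\EE$-triangle
\[
A'\xrightarrow{\svecv{a'}{w'}} A\oplus W_1\xrightarrow{\svech{-a}{w_B}} B\dashrightarrow
\]
whose second map is $\W$-epic. The dual of \cite[Lemma 3.12]{HH} then shows that $\underline{a'}$ is the kernel of $\underline a$. For both kernels and cokernels, I would check the universal property directly when applying that lemma: an arbitrary morphism $\underline d$ with $\underline{da}=0$ factors through some $W$, and the $\W$-monic (respectively $\W$-epic) property lets us modify $d$ so that it vanishes on the inflation, exactly as in the proof of Lemma \ref{L3}. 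Uniqueness of the factorization modulo $\W$ uses that $C'$ receives a map from $W^1$ with $\underline{w}=0$.

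\textbf{Pullback and pushout stability.} Using the characterization, it suffices to verify this directly if needed. Let $\underline b\colon B\to C$ be a cokernel and $\underline c\colon D\to C$ arbitrary. By Theorem \ref{ck}, $\underline b$ is a deflation in $\mathfrak{S}$. In an exact category, the pullback of a deflation along any morphism exists and is again a deflation. Moreover, a pullback in $(\E/\W,\mathfrak{S})$ is a genuine categorical pullback in $\E/\W$, so the pullback of a cokernel is a cokernel. The pushout of kernels is dual.

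\textbf{Main obstacle.} The delicate point is the kernel/cokernel step: confirming that the cited lemma applies to a single $\W$-monic inflation (or $\W$-epic deflation) rather than only to full $\EE^\W_\W$-triangles. If it does not, I will prove the universal property by hand, as in Lemma \ref{L3}, using the weak pushout and weak pullback properties of Lemma \ref{L4}.
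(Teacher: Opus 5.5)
Your proof is correct. It shares the paper's core: Theorem \ref{ck}, together with the axioms of the exact structure $\mathfrak{S}$, forces pushouts of kernels to be kernels and pullbacks of cokernels to be cokernels. The difference is in the surrounding steps.

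The paper gets kernels and cokernels by citing semi-abelianness from \cite[Theorem 3.9]{HH}. It then checks only that pushouts of kernels are kernels, and finishes with the fact that a semi-abelian category is right quasi-abelian if and only if it is left quasi-abelian. You instead build kernels and cokernels directly from conditions (2) and (3) of the definition, and you apply both the pushout axiom and its dual. So you need neither the semi-abelian input nor the left--right symmetry result, at the cost of checking the universal properties yourself.

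One justification in that check is off. Uniqueness of the factorization through $\underline{b'}$ needs $\underline{b'}$ to be an epimorphism, and this does not follow from $\underline{w}=0$. It follows because $b'$ is $\W$-monic and the third term of $B\xrightarrow{b'}C'\to W^2\dashrightarrow$ lies in $\W$, so Lemma \ref{L3} applies. To see that $b'$ is $\W$-monic, take $f\colon B\to W_0$ and write $fa=hw^A$, using that $w^A$ is a left $\W$-approximation; then the weak push-out property of Lemma \ref{L4} gives $r\colon C'\to W_0$ with $rb'=f$. Dually, $\underline{a'}$ is a monomorphism because $a'$ is $\W$-epic and $W_2\in\W$ is the first term of $W_2\to A'\xrightarrow{a'}A\dashrightarrow$. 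The existence halves of both universal properties work as you describe.
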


\begin{proof}
Let $\underline i:A\to B$ be a kernel in $\E/\W$. Since by \cite[Theorem 3.9]{HH}, $\E/\W$ is semi-abelian, $\underline i$ admits a kernel-cokernel pair, which also means that it admits a short exact sequence
$$\xymatrix{A \ar[r]^{\underline i} &B \ar[r]^{\underline p} &C}$$
in $(\E/\W,\mathfrak{S})$. For any push-out diagram
$$\xymatrix{
A \ar[r]^{\underline i} \ar[d] &B \ar[d]\\
A' \ar[r]_{\underline {i'}} &B'
}
$$
by the definition of exact category, $\underline {i'}$ admits a short exact sequence $A' \xrightarrow{\underline {i'}} B'\xrightarrow{\underline {p'}} C'$. Hence $\underline {i'}$ is a kernel, which implies that $\underline \h$ is right quasi-abelian. A semi-abelian category is right quasi-abelian if and only if it is left quasi-abelian. Hence $\underline \h$ is quasi-abelian.
\end{proof}

\section{Abelian quotients}

Since $\E/\W$ is semi-abelian, it is abelian if and only if any morphism which is both epic and monic is in fact isomorphic. On the other hand, if a morphism is both a cokernel and a monomorphism, then it is an isomorphism. Hence if any epimorphism in $\E/\W$ is a cokernel, then $\E/\W$ is abelian.

\begin{prop}\label{mo}
Assume that $\E/\W$ is abelian. Consider the following commutative diagram of $\EE$-triangles
$$\xymatrix{
W_2 \ar@{=}[r] \ar[d] &W_2 \ar[d]\\
A \ar[r]^w \ar[d]_{a} &W_1 \ar[r] \ar[d]^{w_1} &W \ar@{=}[d] \ar@{-->}[r] & \\
B \ar[r]_{b} \ar@{-->}[d]  &C \ar[r] \ar@{-->}[d] &W\ar@{-->}[r] &\\
&&
}
$$
with $W, W_1, W_2\in \W$, if $b$ is $\W$-monic and $w_1$ is a right $\W$-approximation, then $w$ is a left $\W$-approximation.
\end{prop}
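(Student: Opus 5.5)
The plan is to turn the diagram into an $\EE$-triangle with a $\W$-epic deflation, read it in $\E/\W$, and use that $\E/\W$ is abelian to upgrade it to a kernel-cokernel pair. Theorem \ref{ck} will then give $\W$-monicity, and the hypothesis that $b$ is $\W$-monic lets us push this back onto $w$.

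\textbf{Step 1.} Since the column $A\xrightarrow{a}B\dashrightarrow$ is obtained from $W_2\to W_1\xrightarrow{w_1}C\dashrightarrow$ by the pullback along $b$, the dual of Lemma \ref{L4} gives an $\EE$-triangle
$$A\xrightarrow{\svecv{a}{w}} B\oplus W_1\xrightarrow{\svech{-b}{w_1}} C\dashrightarrow.$$
Because $w_1$ is a right $\W$-approximation, the deflation $\svech{-b}{w_1}$ is $\W$-epic. By the dual of \cite[Lemma 3.12]{HH}, $\underline a$ is then a kernel of $\underline b$ in $\E/\W$.

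\textbf{Step 2.} Since $b$ is $\W$-monic and the third term of the middle row is $W\in\W$, Lemma \ref{L3} shows that $\underline b$ is an epimorphism in $\E/\W$. As $\E/\W$ is abelian, $\underline b$ is the cokernel of its kernel $\underline a$. Hence $A\xrightarrow{\underline a}B\xrightarrow{\underline b}C$ is a kernel-cokernel pair, and by Theorem \ref{ck} it lies in $\mathfrak{S}$, i.e.\ it is the image of an $\EE^{\W}_{\W}$-triangle.

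\textbf{Step 3 (main obstacle).} We must show that the inflation $\svecv{a}{w}$ from Step 1 is $\W$-monic. We only know that our sequence is isomorphic in $\E/\W$ to the image of some $\EE^{\W}_{\W}$-triangle $A'\xrightarrow{x}Y\xrightarrow{y}C'\dashrightarrow$. Transporting $\W$-monicity across an isomorphism that holds only modulo $\W$ is the delicate point. I would first try to imitate the proof of Theorem \ref{ck}: build a morphism of $\EE$-triangles from the $\EE^{\W}_{\W}$-triangle to our triangle, and then use Lemma \ref{L4} to replace the middle map by one making the left square a weak push-out. The fact that the $\EE^{\W}_{\W}$-triangle has $\W$-monic inflation then forces $\svecv{a}{w}$ to be $\W$-monic. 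The comparison maps differ from the given ones by morphisms factoring through $\W$. I would absorb these using the $\W$-approximations available on both sides: $w_1$ on the $C$-side, and the $\W$-monic inflation of the $\EE^{\W}_{\W}$-triangle on the $A$-side.

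\textbf{Step 4 (conclusion).} Let $f\colon A\to W'$ with $W'\in\W$. By Step 3 there are $h\colon B\to W'$ and $g\colon W_1\to W'$ with $f=ha+gw$. Since $b$ is $\W$-monic, $h=h'b$ for some $h'\colon C\to W'$. The middle square of the diagram commutes, so $ba=w_1w$, and therefore
$$f=h'ba+gw=(h'w_1+g)w.$$
Thus $f$ factors through $w$, so $w$ is a left $\W$-approximation.
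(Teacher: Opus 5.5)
You follow the paper's route: the paper also first obtains some $w'\colon A\to W_1$ with $\svecv{a}{w'}$ $\W$-monic, then runs your Step 4 computation. But Step 3, the only non-formal step, is left as a plan, and the plan as stated does not work. In the proof of Theorem \ref{ck}, Lemma \ref{L4} yields a weak push-out only because the comparison morphism of $\EE$-triangles is the identity on the third term $C'$. Here the triangle $A\xrightarrow{\svecv{a}{w'}}B\oplus W_1\xrightarrow{y}C\overset{\delta_2}{\dashrightarrow}$, with $y=\svech{-b}{w_1}$, is fixed. Comparing it with the $\W$-monic triangle $A\xrightarrow{\svecv{-a}{w^1}}B\oplus V^1\xrightarrow{\svech{b'}{m}}C'\overset{\delta_1}{\dashrightarrow}$ from the first half of that proof ($w^1$ a left $\W$-approximation) gives only a morphism $(\alpha,\mu,\theta)$ with $\alpha_*\delta_1=\theta^*\delta_2$. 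The map $\theta\colon C'\to C$ is invertible only in $\E/\W$, and a weak push-out cannot carry $\W$-monicity across $\theta^*$. (The paper passes over this point too, with ``by the proof of Theorem \ref{ck}''.) A smaller point: the dual of Lemma \ref{L4} produces the triangle for some $w'$, not necessarily your $w$. Since $w$ and $w'$ agree on $W_2\to A$, you have $w'-w=b_1a$, so $\svecv{a}{w'}=\left(\begin{smallmatrix}1&0\\ b_1&1\end{smallmatrix}\right)\svecv{a}{w}$ and the two inflations are $\W$-monic together. You should state this; the paper handles it via $b_1=c_1b$.

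Your idea of absorbing the error via $w_1$ does work once phrased with extension classes. Since $y$ is $\W$-epic, $\rho^*\delta_2=0$ for every $\rho$ with target $C$ that factors through $\W$. Both $\underline b$ (your Step 2) and $\underline{b'}$ are cokernels of $\underline a$. So choose $\theta\colon C'\to C$ with $\underline\theta$ invertible and $\underline{\theta b'}=\underline b$, and $\eta\colon C\to C'$ with $\underline\eta=\underline\theta^{-1}$. Now $b=y\svecv{-1}{0}$, and both $\theta b'-b$ and $\theta m$ factor through $\W$. Since $y$ is $\W$-epic, there is $\mu$ with $y\mu=\theta\svech{b'}{m}$, and (ET3)$^{\mathrm{op}}$ gives $\alpha\colon A\to A$ with $\alpha_*\delta_1=\theta^*\delta_2$. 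Take any $f\colon A\to W_0$ with $W_0\in\W$. Because the inflation of $\delta_1$ is $\W$-monic, $\theta^*(f_*\delta_2)=(f\alpha)_*\delta_1=0$. Because $\theta\eta-1_C$ factors through $\W$, $(\theta\eta-1_C)^*\delta_2=0$. Hence $f_*\delta_2=\eta^*\theta^*(f_*\delta_2)-f_*\bigl((\theta\eta-1_C)^*\delta_2\bigr)=0$. So every such $f$ factors through $\svecv{a}{w'}$, hence through $\svecv{a}{w}$, and your Step 4 finishes the proof.
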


\begin{proof}
Since $b$ is $\W$-monic, $\underline b$ is an epimorphism. Then $\E/\W$ is abelian implies that $\underline b$ is a cokernel. By the proof of Theorem \ref{ck}, there is a morphism $w':A\to W_1$ such that $\svecv{a}{w'}:A\to B\oplus W_1$ is $\W$-monic. Note that $w'-w$ factors through $a$, hence there is a morphism $b_1:B\to W_1$ such that $b_1a=w'-w$. Since $b$ is $\W$-monic, there is a morphism $c_1:C\to W_1$ such that $b_1=c_1b$. Let $w_0:A\to W_0$ be any morphism with $W_0\in \W$. Then there is a morphism $\svech{f}{g}:B\oplus W_1\to W_0$ such that $w_0=fa+gw'$. Since $b$ is $\W$-monic, there is a morphism $f_0:C\to W_0$ such that $f=f_0b$. Hence $w_0=f_0ba+g(w+c_1ba)=f_0w_1w+gw+gc_1w_1w$, which implies that $w$ is a left $\W$-approximation.
\end{proof}

By Proposition \ref{mo} and Lemma \ref{sum}, we can get the following corollary.

\begin{cor}\label{moc}
Any indecomposable object $W\in {_\mathfrak{e}}\W$ admits an $\EE$-triangle
$$A_0\xrightarrow{a_0} W_0\to W\dashrightarrow$$
where $w$ is a minimal left $\W$-approximation.
\end{cor}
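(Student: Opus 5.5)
Throughout, $\E/\W$ is assumed abelian, as in Proposition \ref{mo}. Reading the statement as intended, the $\EE$-triangle should be
$$A_0\xrightarrow{a_0} W_0\to W\dashrightarrow$$
with $a_0$ a minimal left $\W$-approximation. The strategy is to take an epic-$\EE$-triangle ending in $W$ and apply Proposition \ref{mo} to it.

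Let $W\in{_\mathfrak{e}}\W$ be indecomposable and fix an epic-$\EE$-triangle $A\xrightarrow{a}B\xrightarrow{}W\dashrightarrow$, so that $a$ is left minimal and $\W$-monic. By hereditary-likeness, $B$ admits an $\EE$-triangle $W_2\to W_1\xrightarrow{w_1}B\dashrightarrow$ with $w_1$ a right $\W$-approximation; by Krull--Schmidt we may take $w_1$ right minimal. Since $B\to W$ is a deflation and $W_1\to B$ is a deflation, the composite $W_1\to W$ is a deflation (axiom (ET4)$^{\rm op}$). The octahedral-type diagram from (ET4)$^{\rm op}$ then gives an object $A_0$ and $\EE$-triangles
$$A_0\xrightarrow{a_0}W_1\to W\dashrightarrow\qquad\text{and}\qquad W_2\to A_0\to A\dashrightarrow,$$
together with a morphism of $\EE$-triangles from the first into $A\xrightarrow{a}B\to W\dashrightarrow$ whose middle component is $w_1$. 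This is exactly the diagram shape of Proposition \ref{mo}, with $b:=a$ being $\W$-monic and $w_1$ a right $\W$-approximation. Proposition \ref{mo} therefore shows that $a_0$ is a left $\W$-approximation.

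It remains to arrange minimality. Since $\E$ is Krull--Schmidt, decompose $a_0=\svecv{a_0'}{0}:A_0\to W_0\oplus W'$ with $a_0'$ left minimal. The corresponding triangle splits off the trivial summand $0\to W'\xrightarrow{1}W'$, so $W\simeq W''\oplus W'$ and we get an $\EE$-triangle $A_0\xrightarrow{a_0'}W_0\to W''\dashrightarrow$. Here $a_0'$ is still a left $\W$-approximation, and it is now minimal. Because $W$ is indecomposable, either $W'=0$ and we are done, or $W''=0$. The main obstacle is ruling out the second case, where $a_0'$ would be an isomorphism and $A_0\in\W$.

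To exclude $W''=0$, I would use the triangle $W_2\to A_0\to A\dashrightarrow$. If $A_0\in\W$, then $A\in\W$ once this triangle splits, or more precisely $\underline{A}$ is controlled by $W$-objects. In that case the $\W$-monic, left minimal morphism $a$ would force the epic triangle to split, contradicting the remark that $A\notin\W$ in an epic-$\EE$-triangle. If this direct argument proves awkward, the fallback is Lemma \ref{sum}, as the text suggests: closure of ${_\mathfrak{e}}\W$ under summands lets us reduce to the case where the summand $W''$ genuinely carries the epic structure, and then pass to the minimal version.
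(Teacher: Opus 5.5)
The first part of your argument is sound and is what the paper's one-line justification (``by Proposition \ref{mo} and Lemma \ref{sum}'') amounts to. You start from an epic-$\EE$-triangle $A\xrightarrow{a}B\xrightarrow{b}W\overset{\delta}{\dashrightarrow}$ and compose a right $\W$-approximation $w_1\colon W_1\to B$ with $b$ via (ET4)$^{\rm op}$. Proposition \ref{mo} then makes $a_0\colon A_0\to W_1$ a left $\W$-approximation, and Krull--Schmidt lets you split off the non-minimal part. You also correctly identify the one real issue: excluding $W''=0$, i.e.\ showing that $A_0\xrightarrow{a_0}W_1\to W\overset{\delta_0}{\dashrightarrow}$ does not split. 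This is also what Lemma \ref{e} later relies on, since it uses $A_0\notin\W$ and non-splitness.

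\textbf{The gap.} Your argument for excluding $W''=0$ does not work. Nothing forces $W_2\to A_0\to A\dashrightarrow$ to split. The inference ``$A_0\in\W\Rightarrow A\in\W$'' is false in general: by axiom (3) of hereditary-like, every object $A$ sits in an $\EE$-triangle $W_2\to W_1\to A\dashrightarrow$ whose middle term lies in $\W$. The fallback via Lemma \ref{sum} is not an argument either, since closure of ${_\mathfrak{e}}\W$ under summands says nothing about whether $W$ has been split off the cone.

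\textbf{The fix.} Use the other output of (ET4)$^{\rm op}$, which you mention but never exploit. It gives a morphism of $\EE$-triangles $(e,w_1,1_W)$ from $A_0\xrightarrow{a_0}W_1\to W\overset{\delta_0}{\dashrightarrow}$ to the epic triangle, so that $\delta=e_*\delta_0$.
\begin{itemize}
\item Suppose $W''=0$. Then $a_0=\svecv{a_0'}{0}$ with $a_0'$ invertible, so $a_0$ is a split monomorphism, say $ra_0=1$. Hence $\delta_0=r_*(a_0)_*\delta_0=0$, and therefore $\delta=0$.
\item So the epic triangle splits and $a\cong\svecv{1}{0}\colon A\to A\oplus W$.
\item The idempotent $\left(\begin{smallmatrix}1&0\\0&0\end{smallmatrix}\right)$ of $A\oplus W$ fixes $a$ but is not invertible because $W\neq 0$. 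This contradicts left minimality of $a$.
\end{itemize}
Thus $\delta_0\neq 0$, which forces $W''\neq 0$. Since $W$ is indecomposable, $W'=0$. So $a_0$ is already a minimal left $\W$-approximation, and in addition $A_0\notin\W$.
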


\section{cluster tilting in the subcategories}

In this section, we assume that $\E$ is Krull-Schmidt.

\begin{defn}\rm
Let $(\E,\mathbb{F},\mathfrak{s}|_{\mathbb{F}})$ be a subcategory of $\E$. A subcategory $\M$ is called an \defines{$\mathbb{F}$-cluster tilting subcategory} if the following conditions are satisfied:
\begin{itemize}
\item[(1)] $\M$ is hereditary-like in $(\E,\mathbb{F},\mathfrak{s}|_{\mathbb{F}})$.
\item[(2)] $\mathbb{F}(\M,\M)=0$.
\end{itemize}
\end{defn}

%
%
%
%
%
%
%
%
%


\begin{defn} \rm
Let $\mathcal A$ be any Krull-Schmidt additive category.
\begin{itemize}
\item[(a)] $\mathcal A$ is called \defines{left locally finite} if for any indecomposable object $Y$, there are only finite many indecomposable objects $X$ such that $\Hom_{\mathcal A}(X,Y)\neq 0$.
\item[(b)] $\mathcal A$ is called \defines{right locally finite} if for any indecomposable object $Y$, there are only finite many indecomposable objects $Z$ such that $\Hom_{\mathcal A}(Y,Z)\neq 0$.
\item[(c)] $\mathcal A$ is called locally finite if it is both left and right locally finite.
\end{itemize}
\end{defn}


\begin{lem}\label{e}
Assume that $\E/\W$ is left locally finite. If $\E/\W$ is abelian, then any indecomposable object $W\in {_\mathfrak{e}}\W$ admits an $\EE$-triangle
$$A_j\xrightarrow{a_j} W_j\to W\dashrightarrow$$
where $a_j$ is a minimal left $\W$-approximation and $W_j\in \W\backslash {_\mathfrak{e}}\W$.
\end{lem}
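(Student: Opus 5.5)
Start from Corollary \ref{moc}: since $\E/\W$ is abelian, each indecomposable $W\in{_\mathfrak{e}}\W$ has an $\EE$-triangle $A_0\xrightarrow{a_0}W_0\to W\dashrightarrow$ with $a_0$ a minimal left $\W$-approximation. If $W_0\in\W\backslash{_\mathfrak{e}}\W$ we are done. Otherwise $W_0$ has indecomposable summands in ${_\mathfrak{e}}\W$. By Lemma \ref{sum}, we may split $W_0=W_0'\oplus W_0''$ with $W_0'\in\W\backslash{_\mathfrak{e}}\W$ and $W_0''\in{_\mathfrak{e}}\W$. The plan is to replace each summand of $W_0''$ by its own triangle from Corollary \ref{moc} and iterate. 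Termination will come from left local finiteness of $\E/\W$.

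For the iteration step, apply Corollary \ref{moc} to $W_0''$ to get $A_1\xrightarrow{a_1}W_1\to W_0''\dashrightarrow$ with $a_1$ a minimal left $\W$-approximation. Add the split triangle on $W_0'$ to obtain a deflation $W_0'\oplus W_1\to W_0$. Compose this with the deflation $W_0\to W$ (axiom (ET4)$^{\rm op}$) to get an $\EE$-triangle $A'\to W_0'\oplus W_1\to W\dashrightarrow$. Here $A'$ sits in an $\EE$-triangle $A_1\to A'\to A_0\dashrightarrow$.

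We then check that $A'\to W_0'\oplus W_1$ is a left $\W$-approximation, i.e. $\W$-monic. This uses the octahedral diagram. The key input is Proposition \ref{mo}: in its configuration, a $\W$-monic bottom row plus a right $\W$-approximation on the column forces the top map to be a left $\W$-approximation. After that we pass to the minimal version, and by Krull--Schmidt we discard split summands. The new middle term has its ${_\mathfrak{e}}\W$-part pushed one step further.

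The main obstacle is termination. Each step introduces new objects $A_1, A_2,\dots$ with nonzero morphisms in $\E/\W$ towards $A_0$. The intended argument is the following: a minimal left $\W$-approximation whose cone lies in $\W$ makes $\underline{a}$ epic, by Lemma \ref{L3}. In the abelian category $\E/\W$ these epis then yield nonzero maps $A_{k}\to A_0$ (or toward a fixed indecomposable summand), so all the $A_k$ are indecomposable objects $X$ with $\Hom_{\E/\W}(X,A_0)\neq0$. Left local finiteness allows only finitely many such $X$. One must also rule out repetition, i.e. cycles returning to the same indecomposable of ${_\mathfrak{e}}\W$. I would handle this by a radical/nilpotency argument: the composites of non-isomorphisms along a cycle lie in the radical, and left local finiteness together with Krull--Schmidt bounds the lengths of nonzero such chains (Harada--Sai type). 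This forces the process to stop with $W_j\in\W\backslash{_\mathfrak{e}}\W$. This finiteness step is where I expect the real work, and I am least certain of the details there.
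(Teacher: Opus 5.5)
Your overall architecture matches the paper's. You start from Corollary~\ref{moc}, replace the ${_\mathfrak{e}}\W$-part of the middle term by its own approximation triangle, compose via (ET4)$^{\rm op}$, and discard $\W$-summands. This produces a chain of approximation triangles $A_i\xrightarrow{a_i}W_i\to W\dashrightarrow$ linked by morphisms that are the identity on $W$, with connecting maps $c_i\colon A_i\to A_{i-1}$. Left local finiteness then forces a repetition $A_j\cong A_k$.

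\textbf{The gap is in how you rule out that repetition.} A Harada--Sai type bound is not available. The hypotheses (Krull--Schmidt, $\E/\W$ left locally finite and abelian) give no bound on the length of nonzero composites of radical maps among finitely many indecomposables. Already a single indecomposable with endomorphism ring $k[[t]]$ has nonzero radical chains of every length. Harada--Sai needs bounded length, or at least Hom-finiteness so that the relevant radical is nilpotent. Knowing that the composite $A_k\to A_0$ is a nonzero epimorphism in $\E/\W$ does not rescue this either: in abelian categories an indecomposable can have an epimorphic radical endomorphism (multiplication by $p$ on $\mathbb{Z}(p^\infty)$). You also simply assume the $c_i$ are non-isomorphisms, which is exactly the point that needs proof.

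\textbf{The missing idea, which is how the paper finishes, is to use the triangle structure instead of nilpotency.} Normalizing to $j=0$, the paper uses that every morphism in the chain is the identity on $W$. Hence the composite $c\colon A_0\to A_0$ around the cycle satisfies $(1-c)_*\delta_0=0$, so $1-c=w_0a_0$ factors through $a_0$. Since $A_0\notin\W$ is indecomposable, $w_0a_0$ lies in the radical of $\mathrm{End}(A_0)$. Therefore $c$ is an automorphism, and all maps on the cycle are isomorphisms. In particular $c_1$ is an isomorphism, so $\svech{c_1}{w'}\colon A_1\oplus W^1\to A_0$ is a retraction. Then the (ET4)$^{\rm op}$ triangle $B_1\to A_1\oplus W^1\to A_0\dashrightarrow$ splits and $B_1\cong W^1\in\W$. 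This contradicts the minimality of $b_1$: a minimal left $\W$-approximation of an object of $\W$ is an isomorphism, which would force the ${_\mathfrak{e}}\W$-part of $W_0$ to vanish.

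Two smaller points:
\begin{enumerate}
\item Proposition~\ref{mo} cannot show that $A'\to W_0'\oplus W_1$ is a left $\W$-approximation. Its hypothesis would require $W_0'\oplus W_1\to W_0$ to be a right $\W$-approximation of $W_0\in\W$, i.e.\ a split epimorphism, which fails here. The claim does follow directly: extend $f|_{A_1}$ along $a_1$; the remainder factors through $A'\to A_0$ and then through $a_0$.
\item Lemma~\ref{L3} applied to $a_0$ is vacuous, since $W_0\cong 0$ in $\E/\W$. The epimorphisms you actually need are the $\underline{c_i}$. They come from Lemma~\ref{L3} applied to the triangle $A_i'\to A_{i-1}\oplus W_i'\to W_{i-1}\dashrightarrow$ provided by Lemma~\ref{L4}.
\end{enumerate}
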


\begin{proof}
By Corollary \ref{moc}, $W\in {_\mathfrak{e}}\W$ admits an $\EE$-triangle
$$A_0\xrightarrow{a_0} W_0\to W\dashrightarrow$$
where $a_0$ is a minimal left $\W$-approximation. If $W_0\notin \W\backslash {_\mathfrak{e}}\W$, then $W_0$ admits an $\EE$-triangle
$$B_1\xrightarrow{b_1} W_1'\to W_0\dashrightarrow$$
where $b_1$ is a minimal left $\W$-approximation. Hence we have the following commutative diagram
$$\xymatrix{
B_1 \ar@{=}[r] \ar[d] &B_1 \ar[d]^{b_1}\\
A_1' \ar[d]^{c_1'} \ar[r]^{a_1'} &W_1' \ar[r] \ar[d] &W \ar@{=}[d] \ar@{-->}[r] &\\
A_0 \ar[r]_{a_0} \ar@{-->}[d] &W_0 \ar[r] \ar@{-->}[d] &W \ar@{-->}[r] &\\
&&
}
$$
where $a_1'$ is a left $\W$-approximation. Let $A_1'=A_1\oplus W^1$, where $A_1$ has no direct summand in $\W$ and $W^1\in \W$. Then we have following commutative diagram
$$\xymatrix{
A_1 \ar[d]^{\svecv{1}{0}} \ar[r]^{a_1} &W_1 \ar[r] \ar[d]^{\svecv{1}{0}} &W \ar@{=}[d] \ar@{-->}[r] &\\
A_1\oplus W^1 \ar[d]^{c_1'=\svech{c_1}{w'}} \ar[r]^-{\left(\begin{smallmatrix}a_1&0\\0&1\end{smallmatrix}\right)} &W_1\oplus W^1 \ar[d] \ar[r] &W \ar@{=}[d] \ar@{-->}[r] &\\
A_0 \ar[r]_{a_0}  &W_0 \ar[r]  &W \ar@{-->}[r] &
}
$$
where $a_1$ is a left $\W$-approximation. Since $W$ is indecomposable, $a_1$ is left minimal. Moreover, we can get that $A_1$ is also indecomposable. If $W_i\notin \W\backslash {_\mathfrak{e}}\W(i\geq 1)$, then by the same method, we can get the following commutative diagram
$$\xymatrix{
A_{i+1} \ar[d] \ar[r]^{a_{i+1}} &W_{i+1} \ar[r] \ar[d] &W \ar@{=}[d] \ar@{-->}[r] &\\
A_i \ar[r]_{a_i}  &W_i \ar[r]  &W \ar@{-->}[r] &
}
$$
where $A_{i+1}$ is indecomposable and $a_{i+1}$ is a minimal left $\W$-approximation.
Since $\E/\W$ is left locally finite, if any $W_i\notin \W\backslash {_\mathfrak{e}}\W(i\geq 1)$, there must be $j,k$ such that $0\leq j<k$ such that $A_j\simeq A_k$. Otherwise, there exists $l>0$ such that $\Hom_{\E/\W}(A_l,A_0)=0$. Consequently, the composition $c_l\cdots c_1$ factors through $\W$, which implies that it factors through $a_l$. But this means $W$ is a direct summand of $W_0$ and $A_0\xrightarrow{a_0} W_0\to W\dashrightarrow$ splits, a contradiction.

Now we can assume that $j=0$ and $A_k=A_0$. Hence we have the following commutative diagram
$$\xymatrix{
A_0 \ar[d]^{c^0} \ar[r]^{a_0} &W_0 \ar[r] \ar[d] &W \ar@{=}[d] \ar@{-->}[r] &\\
A_1 \ar[d]^{c_1} \ar[r]^{a_1} &W_1 \ar[r] \ar[d] &W \ar@{=}[d] \ar@{-->}[r] &\\
A_0 \ar[r]_{a_0}  &W_0 \ar[r]  &W \ar@{-->}[r] &
}
$$
Then there exists a morphism $w_0:W_0\to A_0$ such that $1-c_1c_0=w_0a_0$. Since $A_0\notin \W$, $w_0a_0$ can not be invertible. Then $A_0$ is indecomposable implies that $c_1c_0=1-w_0a_0$ is invertible. Since $A_1$ is also indecomposable, $c_1,c_0$ are isomorphisms. Then $c_1'$ is a retraction, which implies that $B_1\simeq W^1$. Since $b_1$ is left minimal, this is a contradiction. Hence there is an $\EE$-triangle
$$A_j\xrightarrow{a_j} W_j\to W\dashrightarrow$$
where $a_j$ is a minimal left $\W$-approximation, and $W_j\in \W\backslash {_\mathfrak{e}}\W$.
\end{proof}

Let $\underline {\mathcal P}$ be the subcategory of $\E$ such that any non-zero indecomposable object in $A\in \underline {\mathcal P}$ admits an $\EE$-triangle
$$A\xrightarrow{a'} W'\to W\dashrightarrow$$
where $a'$ is a minimal left $\W$-approximation, $W_j\in \W\backslash {_\mathfrak{e}}\W$, $W\in {_\mathfrak{e}}\W$ is indecomposable.
We have the following observation.

\begin{prop}
Assume that $\E/\W$ is left locally finite. If $\E/\W$ is abelian, then it has enough projectives $\underline {\mathcal P}$.
\end{prop}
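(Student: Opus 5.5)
The plan is to prove two things: every object of $\underline{\mathcal P}$ is projective in $\E/\W$, and every object of $\E/\W$ is the target of an epimorphism from an object of $\underline{\mathcal P}$. Since $\E/\W$ is assumed abelian, epimorphisms are cokernels, so this amounts to having enough projectives. Both steps run through Corollary \ref{L2c}, Lemma \ref{L3} and Corollary \ref{rightc}. Lemma \ref{e} supplies the triangles used in the second step, and this is where the hypotheses that $\E/\W$ is abelian and left locally finite enter.

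\emph{Projectivity.} Let $A\in\underline{\mathcal P}$ be indecomposable, with $\EE$-triangle $A\xrightarrow{a'}W'\to W\dashrightarrow$, where $a'$ is a minimal left $\W$-approximation and $W'\in\W\backslash{_\mathfrak{e}}\W$. Let $\underline c\colon B\to C$ be an epimorphism and $f\colon A\to C$ any morphism.
\begin{enumerate}
\item By Corollary \ref{L2c}, we may take $c$ to be $\W$-monic and to sit in an $\EE$-triangle $B\xrightarrow{c}C\xrightarrow{d}W_0\dashrightarrow$ with $W_0\in\W$.
\item Since $a'$ is a left $\W$-approximation, we can write $df=ha'$ for some $h\colon W'\to W_0$.
\item Since $W'\in\W\backslash{_\mathfrak{e}}\W$ and $c$ is $\W$-monic with $W_0\in\W$, Corollary \ref{rightc} gives $h=dh'$ for some $h'\colon W'\to C$.
\item Then $d(f-h'a')=0$, so $f-h'a'=ck$ for some $k\colon A\to C$.
\item Because $h'a'$ factors through $W'\in\W$, we get $\underline f=\underline c\,\underline k$.
\end{enumerate}
This shows $A$ is projective in $\E/\W$. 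Additivity then handles arbitrary objects of $\underline{\mathcal P}$.

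\emph{Enough projectives.} Let $X\in\E$ and fix an $\EE$-triangle $X\xrightarrow{x}W^1\to W^2\dashrightarrow$ with $x$ a left $\W$-approximation. By Lemma \ref{sum} and Krull--Schmidt, write $W^2=U\oplus V$ with $U\in\W\backslash{_\mathfrak{e}}\W$ and $V\in{_\mathfrak{e}}\W$.
\begin{enumerate}
\item For each indecomposable summand of $V$, Lemma \ref{e} gives an $\EE$-triangle whose first term lies in $\underline{\mathcal P}$. Summing these with the split triangle $0\to U\to U$ yields an $\EE$-triangle $P_0\xrightarrow{p'}W''\to W^2\dashrightarrow$ with $P_0\in\underline{\mathcal P}$, $W''\in\W\backslash{_\mathfrak{e}}\W$, and $p'$ a left $\W$-approximation.
\item Since $x$ is $\W$-monic and $W''\in\W\backslash{_\mathfrak{e}}\W$, Corollary \ref{rightc} lifts $\mathrm{id}_{W^2}$ to a morphism $W''\to W^1$. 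Extending to a morphism of $\EE$-triangles, which is identity on the third term, gives $p\colon P_0\to X$.
\item The dual form of Lemma \ref{L4}, as used in the proof of Theorem \ref{ck}, produces an $\EE$-triangle $P_0\xrightarrow{\svecv{p}{p'}}X\oplus W''\to W^1\dashrightarrow$ (up to sign).
\item This inflation is $\W$-monic because $p'$ is a left $\W$-approximation, and its third term lies in $\W$. Lemma \ref{L3} then shows that $\underline{\svecv{p}{p'}}=\underline{\svecv{p}{0}}$ is an epimorphism in $\E/\W$.
\item Composing with the projection $X\oplus W''\to X$, which is an isomorphism in $\E/\W$, shows that $\underline p$ is an epimorphism.
\end{enumerate}

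The main obstacle I expect is the second step. One must check that the lifted square really yields a morphism of $\EE$-triangles, so that Lemma \ref{L4} applies with third component the identity. One must also check that the direct sum of the triangles from Lemma \ref{e} still has its inflation a left $\W$-approximation. If that fails, I would instead argue through the weak pullback property in Lemma \ref{L4}(ii) directly.
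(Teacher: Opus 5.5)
Your proof is correct. The projectivity half is the paper's argument essentially verbatim; the one slip is that $k$ should be a map $A\to B$.

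For the ``enough projectives'' half you take a different and more complete route than the paper.

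\textbf{The paper's route.} It starts from an indecomposable $A_0$ and a minimal left $\W$-approximation triangle $A_0\to W_0'\to W$. When $W_0'\notin\W\backslash{_\mathfrak{e}}\W$, it reopens the proof of Lemma \ref{e}, namely the iterated chain of triangles $A_{i+1}\to W_{i+1}\to W$ mapping to $A_i\to W_i\to W$. From this it extracts some $A\in\underline{\mathcal P}$ with a morphism of triangles $f\colon A\to A_0$ that is the identity on $W$. It then simply asserts that $\underline f$ is an epimorphism. Its case split also tacitly needs $W$ to be indecomposable, both to conclude $A_0\in\underline{\mathcal P}$ and to invoke Lemma \ref{e}.

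\textbf{Your route.} You use Lemma \ref{e} only as a black box on each indecomposable summand of the cone, so a decomposable cone causes no trouble. You produce the comparison map directly from Corollary \ref{rightc} and (ET3)$^{\mathrm{op}}$. You then prove the epimorphism property via Lemma \ref{L4} and Lemma \ref{L3}, which is precisely the justification the paper omits. Your version works for arbitrary $X$ and is self-contained. The paper's only advantage is reusing the chain already built in the proof of Lemma \ref{e}.

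\textbf{The two obstacles you flag are not real.}
\begin{itemize}
\item (ET3)$^{\mathrm{op}}$ applied to the square $yg=\mathrm{id}_{W^2}\circ q$ gives $p$ with $p_*\delta'=\delta$. This is exactly the hypothesis of Lemma \ref{L4} itself (a push-out along $p$), not of its dual.
\item A direct sum of left $\W$-approximations is again a left $\W$-approximation.
\end{itemize}

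\textbf{A harmless pedantic point.} The first terms supplied by Lemma \ref{e} lie in $\underline{\mathcal P}$ only up to direct summands in $\W$. This does not matter in $\E/\W$.
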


\begin{proof}
We first show that $\underline {\mathcal P}$ is a subcategory of projectives of $\E/\W$.

Let $\underline b:B\to C$ be an epimorphism in $\E/\W$. Then $\overline b$ admits an $\EE$-triangle
$$B\xrightarrow{b} C\xrightarrow{w_0} W_0\dashrightarrow$$
where $b$ in $\W$-monic and $W_0\in \W$. Let $A\in \underline {\mathcal P}$. Then it admits an $\EE$-triangle
$$A\xrightarrow{a'} W'\to W\dashrightarrow$$
where $a'$ is a minimal left $\W$-approximation, $W'\in \W\backslash {_\mathfrak{e}}\W$ and $W\in {_\mathfrak{e}}\W$.
Let $\overline a:A\to C$ be any morphism in $\E/\W$. Since $a'$ is a left $\W$-approximation, there is a morphism $w':W'\to W_0$ such that $w_0a=w'a'$:
$$\xymatrix{
&A\ar[r]^{a'} \ar[d]^{a} &W'\ar[r] \ar@{.>}[d]^{w'} &W\ar@{-->}[r] &\\
B\ar[r]_{b} &C\ar[r]_{w_0} &W_0\ar@{-->}[r] &
}
$$
By Corollary \ref{rightc}, there is a morphism $v':W'\to C$ such that $w_0v'=w'$. Hence $w_0(a-v'a')=0$, which implies that there is a morphism $u:A\to B$ such that $bu=a-v'a'$. Thus $\underline {bu}=\underline a$, which implies $A$ is projective in $\E/\W$.

Now we show that $\underline {\mathcal P}$ is a subcategory of enough projectives in $\E/\W$.

Let $A_0$ be any indecomposable object in $\E/\W$. $A_0$ admits an $\EE$-triangle
$$A_0\xrightarrow{a_0'} W_0'\to W\dashrightarrow$$
where $a'$ is a minimal left $\W$-approximation and $W\in {_\mathfrak{e}}\W$. If $W_0'\in \W\backslash {_\mathfrak{e}}\W$, then $A_0\in \underline {\mathcal P}$. If $W_0'\notin \W\backslash {_\mathfrak{e}}\W$, by the proof of Lemma \ref{e}, $W$ admits an $\EE$-triangle
$$A\xrightarrow{a'} W'\to W\dashrightarrow$$
where $a'$ is a minimal left $\W$-approximation, $W'\in \W\backslash {_\mathfrak{e}}\W$ and $W\in {_\mathfrak{e}}\W$. Moreover, we have the following commutative diagram
$$\xymatrix{
A\ar[r]^{a'} \ar[d]_{f} &W'\ar[r] \ar[d] &W\ar@{=}[d] \ar@{-->}[r] &\\
A_0\ar[r]^{a_0'}  &W_0'\ar[r]  &W \ar@{-->}[r] &
}
$$
where $\underline f$ is an epimorphism in $\E/\W$. Since $A\in \underline {\mathcal P}$, $\underline {\mathcal P}$ is a subcategory of enough projectives in $\E/\W$.
\end{proof}

By this proposition, we can get the following result.

\begin{prop}
Assume that $\E/\W$ is left locally finite. If $\E/\W$ is abelian, then $\E/\W\simeq \mod \underline {\mathcal P}$.
\end{prop}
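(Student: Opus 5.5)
We would deduce this from the previous proposition together with the standard fact that an abelian category with enough projectives is equivalent to the category of finitely presented modules over its subcategory of projectives. The functor is the restricted Yoneda functor
\[
F\colon \E/\W \longrightarrow \mod \underline{\mathcal P},\qquad X\mapsto \Hom_{\E/\W}(-,X)|_{\underline{\mathcal P}} .
\]
The plan is to check, in order, that $F$ is well defined, full, faithful and dense.

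\textbf{Step 1: $F$ is well defined.} Take $X\in\E/\W$. Since $\E/\W$ is abelian with enough projectives $\underline{\mathcal P}$ (previous proposition), choose an epimorphism $P_0\to X$ with $P_0\in\underline{\mathcal P}$. Then choose an epimorphism $P_1\to \operatorname{Ker}(P_0\to X)$. This gives an exact sequence $P_1\to P_0\to X\to 0$. Applying $\Hom(P,-)$ for $P\in\underline{\mathcal P}$ is exact on it, because $P$ is projective. Hence $FP_1\to FP_0\to FX\to 0$ is exact, and $FX$ is finitely presented. By Yoneda, $FP_i$ is the representable functor $\Hom_{\underline{\mathcal P}}(-,P_i)$.

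\textbf{Step 2: $F$ is fully faithful.} Let $X,Y\in\E/\W$. Write $X=\operatorname{Coker}(P_1\to P_0)$ as in Step 1. Then
\[
\Hom(X,Y)\cong \operatorname{Ker}\big(\Hom(P_0,Y)\to\Hom(P_1,Y)\big).
\]
By Yoneda, $\Hom(P_i,Y)\cong\Hom(FP_i,FY)$. Since $FP_1\to FP_0\to FX\to 0$ is exact, we also have
\[
\Hom(FX,FY)\cong\operatorname{Ker}\big(\Hom(FP_0,FY)\to\Hom(FP_1,FY)\big).
\]
These identifications are natural, so $F$ induces a bijection $\Hom(X,Y)\to\Hom(FX,FY)$.

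\textbf{Step 3: $F$ is dense.} Let $M\in\mod\underline{\mathcal P}$ have a presentation $\Hom(-,P_1)\xrightarrow{\alpha}\Hom(-,P_0)\to M\to 0$. By Yoneda, $\alpha=F(\underline f)$ for a unique $\underline f\colon P_1\to P_0$. Since $\E/\W$ is abelian, $\operatorname{Coker}\underline f$ exists. By right exactness of $F$ on such presentations (as in Step 1), $F(\operatorname{Coker}\underline f)\cong M$.

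\textbf{Expected obstacle.} The only subtle point is that the previous proposition is stated for indecomposables, so we must make sure $\underline{\mathcal P}$ is closed under finite direct sums and direct summands and that every object, not just every indecomposable one, has a projective cover from it. Closure under sums and summands should follow from the Krull--Schmidt assumption. Covering an arbitrary object follows by taking direct sums of the epimorphisms constructed for each indecomposable summand. We would also note that $\underline{\mathcal P}$ is essentially small enough for $\mod\underline{\mathcal P}$ to make sense; this is implicit in the setup.
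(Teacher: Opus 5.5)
Your proposal is correct and takes the same route as the paper. The paper deduces this statement in one line from the preceding proposition (that $\E/\W$ has enough projectives $\underline{\mathcal P}$) together with the standard fact that an abelian category with enough projectives is equivalent to the category of finitely presented modules over its projectives; your restricted Yoneda argument writes out that standard fact, and your Krull--Schmidt remark on passing from indecomposable objects to arbitrary ones fills in a detail the paper leaves implicit.
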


Now we show the following theorem.

\begin{thm}\label{thm3}
Assume that $\E/\W$ is left locally finite. Then $\E/\W$ is abelian if and only if $\W$ is an $\EE_{\mathfrak{e}}$-cluster tilting subcategory in extriangulated subcategory $(\E,\EE_{\mathfrak{e}},\mathfrak{s}|_{\EE_{\mathfrak{e}}})$.
\end{thm}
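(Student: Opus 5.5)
By Proposition \ref{here}, $\W$ is already hereditary-like in $(\E,\EE_{\mathfrak{e}},\mathfrak{s}|_{\EE_{\mathfrak{e}}})$. So in both directions the only thing to control is the vanishing condition $\EE_{\mathfrak{e}}(\W,\W)=0$. Recall that an extension $\delta\in\EE(W,W')$ lies in $\EE_{\mathfrak{e}}$ exactly when its deflation is $(\W\backslash{_\mathfrak{e}}\W)$-epic.

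\textbf{(If).} Assume $\EE_{\mathfrak{e}}(\W,\W)=0$. As noted at the start of Section 4, it suffices to show that every epimorphism in $\E/\W$ is a cokernel. Let $\underline a$ be an epimorphism. By Corollary \ref{L2c} we may assume it comes from an $\EE$-triangle $A\xrightarrow{a}B\xrightarrow{b}W\dashrightarrow$ with $a$ $\W$-monic and $W\in\W$. By Theorem \ref{ck} and \cite[Lemma 3.12]{HH}, it is enough to replace this triangle by an $\EE^{\W}_{\W}$-triangle with the same image.

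Decompose $W=W'\oplus W''$ with $W'\in{_\mathfrak{e}}\W$ and $W''\in\W\backslash{_\mathfrak{e}}\W$, using Lemma \ref{sum}. On the $W''$-part, $b$ is $\W\backslash{_\mathfrak{e}}\W$-epic by Corollary \ref{rightc}.

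The key step is to show that ${_\mathfrak{e}}\W=0$ under the hypothesis. Suppose $0\neq W\in{_\mathfrak{e}}\W$ is indecomposable, with epic-$\EE$-triangle $A\to B\to W$. Take its right $\W$-approximation triangle $W_2\to W_1\xrightarrow{w_W}W$; here $w_W$ is split since $W\in\W$. Instead, pull the epic triangle back along $\W\backslash{_\mathfrak{e}}\W$-maps. The goal is to produce a nonzero element of $\EE_{\mathfrak{e}}(W,W_2')$ for some $W_2'\in\W$, obtained by realizing $W$ via a conflation ending at $W$ whose deflation is $\W\backslash{_\mathfrak{e}}\W$-epic and whose left term lies in $\W$. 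This contradicts the hypothesis. Once ${_\mathfrak{e}}\W=0$, every $\W$-monic triangle ending in $\W$ is $\W$-epic by Corollary \ref{rightc}, hence is an $\EE^{\W}_{\W}$-triangle, and $\underline a$ is a cokernel.

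\textbf{(Only if).} Assume $\E/\W$ is abelian and take $\delta\in\EE_{\mathfrak{e}}(W,W')$ realized by $W'\to B\xrightarrow{b}W$, with $W,W'\in\W$ and $W$ indecomposable. If $W\in\W\backslash{_\mathfrak{e}}\W$, then $b$ is epic against $W$ itself, so $1_W$ factors through $b$ and $\delta=0$. If $W\in{_\mathfrak{e}}\W$, use Lemma \ref{e} to get $A_j\xrightarrow{a_j}W_j\xrightarrow{p}W\dashrightarrow$ with $a_j$ a minimal left $\W$-approximation and $W_j\in\W\backslash{_\mathfrak{e}}\W$. Since $b$ is $(\W\backslash{_\mathfrak{e}}\W)$-epic, $p$ factors through $b$, so $p^*\delta=0$. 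The long exact sequence then gives $\delta=(\delta_j)\circ$ something: $\delta$ comes from a morphism $A_j\to W'$ applied to the extension $\delta_j$ of the $a_j$-triangle. That morphism factors through the left $\W$-approximation $a_j$, so $\delta=f_*\delta_j$ with $f=ga_j$. Since $(a_j)_*\delta_j=0$, we get $\delta=0$.

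The expected main obstacle is the key step in (If): excluding nonzero objects of ${_\mathfrak{e}}\W$ using only $\EE_{\mathfrak{e}}(\W,\W)=0$. I would attempt it by running the iteration of Lemma \ref{e} and using left local finiteness to reach a triangle with middle term in $\W\backslash{_\mathfrak{e}}\W$. Failing that, I would aim for the weaker goal of proving directly that every $\W$-monic triangle ending in ${_\mathfrak{e}}\W$ can be modified to be $\W$-epic.
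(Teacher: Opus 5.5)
Your ``only if'' argument is correct and is the paper's argument in long-exact-sequence form. The paper also takes the triangle $A_0\xrightarrow{a_0}W_0\to W\dashrightarrow$ from Lemma \ref{e} and lifts $W_0\to W$ through the $(\W\backslash{_\mathfrak{e}}\W)$-epic deflation, so the pullback of $\delta$ splits. It then uses that $a_0$ is a left $\W$-approximation to make the deflation a retraction; your identity $\delta=f_*\delta_j=g_*(a_j)_*\delta_j=0$ is the same argument. The paper writes out only this direction.

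Your ``if'' direction has a genuine gap, because it aims at a false statement. In $A\xrightarrow{a}B\xrightarrow{b}W\dashrightarrow$ from Corollary \ref{L2c}, $\underline a$ is the \emph{inflation}, and $\underline b=0$ since $W\cong 0$ in $\E/\W$. If this triangle were an $\EE^{\W}_{\W}$-triangle, \cite[Lemma 3.12]{HH} would make $\underline a$ a kernel of $B\to 0$, i.e.\ an isomorphism. So your reduction would prove that every epimorphism of $\E/\W$ is invertible, which already fails for $X\to 0$ with $X\notin\W$. Accordingly, your key step ${_\mathfrak{e}}\W=0$ is never true, whatever the hypotheses. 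Since $\W\subsetneq\E$, take an indecomposable $X\notin\W$. Its minimal left $\W$-approximation triangle $X\to W^1\to W^2\dashrightarrow$ (as in the proof of Proposition \ref{here}) is an epic-$\EE$-triangle, and $W^2\neq 0$, since otherwise $X\cong W^1\in\W$. Your fallback has the same defect as long as $\underline a$ stays in the inflation position.

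The missing idea is to put $\underline a$ in the \emph{deflation} position.
\begin{enumerate}
\item Take a right $\W$-approximation triangle $W_2\to W_1\xrightarrow{v}B\overset{\eta}{\dashrightarrow}$. By the dual of Lemma \ref{L4}, there is an $\EE$-triangle $K\to A\oplus W_1\xrightarrow{\svech{a}{v}}B\dashrightarrow$. It is $\W$-epic because $v$ is a right $\W$-approximation.
\item By the long exact sequence, its inflation is $\W$-monic iff $\svech{a}{v}^*\colon\EE(B,W_0)\to\EE(A\oplus W_1,W_0)$ is injective for every $W_0\in\W$.
\item Suppose $a^*\epsilon=0=v^*\epsilon$. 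Then $\epsilon=b^*\zeta$ for some $\zeta\in\EE(W,W_0)$.
\item Let $g\colon C\to W$ with $C\in\W\backslash{_\mathfrak{e}}\W$. Corollary \ref{rightc} gives $g=bg'$, and $g'$ factors through $v$, so $g^*\zeta=g'^*\epsilon=0$.
\item Since $g^*\zeta$ is exactly the obstruction to lifting $g$ through the deflation realizing $\zeta$, this gives $\zeta\in\EE_{\mathfrak e}(W,W_0)=0$, hence $\epsilon=0$.
\item So the triangle is an $\EE^{\W}_{\W}$-triangle. Then $\underline a$, which is $\underline{\svech{a}{v}}$ composed with the isomorphism $A\cong A\oplus W_1$ in $\E/\W$, is a cokernel.
\end{enumerate}
By the observation opening the section on abelian quotients, $\E/\W$ is then abelian. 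This is exactly where the vanishing of $\EE_{\mathfrak e}(\W,\W)$ enters. Left local finiteness is not needed in this direction.
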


\begin{proof}

We show the ``only if" part. By Proposition \ref{here}, $\W$ is hereditary-like in $(\E,\EE_{\mathfrak{e}},\mathfrak{s}|_{\EE_{\mathfrak{e}}})$. Let $W,W'$ be any indecomposable objects in $\W$. If $W\notin {_\mathfrak{e}}\W$, then by definition $\EE_{\mathfrak{e}}(W,W')=0$. If $W\in {_\mathfrak{e}}\W$, then by Lemma \ref{e}, $W$ admits an $\EE$-triangle
$$A_0\xrightarrow{a_0} W_0\to W\dashrightarrow$$
where $a_0$ is a minimal left $\W$-approximation and $W_0\in \W\backslash {_\mathfrak{e}}\W$. Let $\delta \in \EE_{\mathfrak{e}}(W,W')$. It admits an $\EE$-triangle
$$W'\to U\to W\overset{\delta}\dashrightarrow.$$
Then we have the following commutative diagram
$$\xymatrix{
&W' \ar[d]^-{\svecv{1}{0}} \ar@{=}[r] &W' \ar[d]\\
A_0 \ar[r]^-{\alpha} \ar@{=}[d] &W'\oplus W_0 \ar[r] \ar[d]^-{\svech{0}{1}} &U \ar[d]^{u} \ar@{-->}[r] &\\
A_0 \ar[r]_{a_0} &W_0 \ar[r] \ar@{-->}[d] &W \ar@{-->}[r] \ar@{-->}[d]^{\delta} &\\
&&
}
$$
of $\EE$-triangles. Since $a_0$ is a left $\W$-approximation, so is $\alpha$. We can get the following commutative diagram
$$\xymatrix{
A_0 \ar[r]^{a_0} \ar@{=}[d] &W_0 \ar[r] \ar[d] &W \ar[d]^{u'} \ar@{-->}[r] &\\
A_0 \ar[r]^-{\alpha} \ar@{=}[d] &W'\oplus W_0 \ar[r] \ar[d]^-{\svech{0}{1}} &U \ar[d]^u \ar@{-->}[r] &\\
A_0 \ar[r]_{a_0} &W_0 \ar[r]  &W \ar@{-->}[r] &
}
$$
which implies that $uu'$ is an isomorphism. Hence $u$ is a retraction, which means $\delta=0$.
\end{proof}

Dually, we can get the following result.

\begin{thm}
Assume that $\E/\W$ is right locally finite. Then $\E/\W$ is abelian if and only if $\W$ is an $\EE^{\mathfrak{m}}$-cluster tilting subcategory in extriangulated subcategory $(\E,\EE^{\mathfrak{m}},\mathfrak{s}|_{\EE^{\mathfrak{m}}})$.
\end{thm}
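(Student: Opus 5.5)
The statement to prove is the dual of Theorem \ref{thm3}. The plan is to dualize its proof, replacing left approximations by right ones, $\W$-monic by $\W$-epic, ${_\mathfrak{e}}\W$ by $\W_{\mathfrak{m}}$, and $\EE_{\mathfrak{e}}=\EE_{\W\backslash {_\mathfrak{e}}\W}$ by $\EE^{\mathfrak{m}}=\EE^{\W\backslash \W_{\mathfrak{m}}}$. In principle one could pass to the opposite extriangulated category $(\E^{\op},\EE^{\op},\mathfrak{s}^{\op})$. There the roles of epic- and monic-$\EE$-triangles are swapped, $\W$ is still hereditary-like (conditions (2) and (3) swap), and $(\E/\W)^{\op}=\E^{\op}/\W$. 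Right local finiteness of $\E/\W$ becomes left local finiteness of $\E^{\op}/\W$, and abelianness is self-dual. So Theorem \ref{thm3} applied to $\E^{\op}$ gives the statement. Nonetheless I would record the dual chain of lemmas explicitly, to make sure each step really dualizes.

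\textbf{Dual chain of lemmas.}
\begin{itemize}
\item[(a)] The dual of Lemma \ref{right} and Corollary \ref{rightc}. For $W_0\in \W\backslash \W_{\mathfrak{m}}$ and an $\EE$-triangle $W\to B\xrightarrow{b} A\dashrightarrow$ with $W\in\W$ and $b$ $\W$-epic, every morphism $W\to W_0$ factors through the inflation $W\to B$.
\item[(b)] The dual of Lemma \ref{sum}: $\W_{\mathfrak{m}}$ is closed under sums and summands.
\item[(c)] Proposition \ref{here}(2): $\W$ is hereditary-like in $(\E,\EE^{\mathfrak{m}},\mathfrak{s}|_{\EE^{\mathfrak{m}}})$.
\item[(d)] The dual of Proposition \ref{mo}, using that in an abelian $\E/\W$ every monomorphism is a kernel. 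By the dual of Lemma \ref{L3}, $\W$-epic deflations with $\W$ cokernel-term correspond to monomorphisms. This yields the dual of Corollary \ref{moc}: each indecomposable $W\in\W_{\mathfrak{m}}$ admits $W\to W_0\xrightarrow{a_0} A_0\dashrightarrow$ with $a_0$ a minimal right $\W$-approximation.
\end{itemize}

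\textbf{The key lemma, and the main obstacle.} The crucial step is the dual of Lemma \ref{e}. Assuming right local finiteness and abelianness, each indecomposable $W\in\W_{\mathfrak{m}}$ should admit $W\to W_j\xrightarrow{a_j} A_j\dashrightarrow$ with $a_j$ a minimal right $\W$-approximation and $W_j\in\W\backslash\W_{\mathfrak{m}}$. I expect this to be the main obstacle. It is an iteration: if $W_i\in\W_{\mathfrak{m}}$, use the dual of Proposition \ref{mo} and the dual octahedral diagram (via the dual of Lemma \ref{L4}) to produce indecomposable $A_{i+1}$ with maps $c_{i+1}:A_i\to A_{i+1}$.

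Right local finiteness then forces one of two outcomes. Either $\Hom_{\E/\W}(A_0,A_l)=0$ for some $l$, which makes the triangle split. Or $A_j\simeq A_k$ for some $j<k$, which makes a composite an isomorphism modulo a non-invertible term, contradicting minimality. The point to check carefully is that right (not left) local finiteness is what the dual argument needs. It is, since the maps now go out of $A_0$.

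\textbf{The two directions.} For ``only if'', hereditary-likeness comes from (c). For vanishing, take indecomposable $W,W'\in\W$ and $\delta\in\EE^{\mathfrak{m}}(W,W')$.
\begin{itemize}
\item If $W'\notin\W_{\mathfrak{m}}$, the inflation of $\delta$ is $(\W\backslash\W_{\mathfrak{m}})$-monic, so $\delta=0$.
\item Otherwise, use the key lemma's triangle $W'\to W_0\to A_0\dashrightarrow$ with $W_0\in\W\backslash\W_{\mathfrak{m}}$. Pulling back along $\delta$ and running the dual of the final diagram in Theorem \ref{thm3}'s proof shows $W'\to U$ is a section, hence $\delta=0$.
\end{itemize}
For ``if'', the cluster tilting condition makes every epimorphism or monomorphism of $\E/\W$ a cokernel or kernel, via Lemma \ref{L3} and its dual together with Theorem \ref{ck}. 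Then semi-abelianness gives abelianness, exactly as in the unstated direction of Theorem \ref{thm3}.
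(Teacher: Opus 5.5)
Your ``only if'' half matches the paper. The paper proves this statement only by the word ``dually'' from Theorem \ref{thm3}. Your passage to $\E^{\op}$, which swaps ${_\mathfrak{e}}\W$ with $\W_{\mathfrak m}$, $\EE_{\mathfrak e}$ with $\EE^{\mathfrak m}$, and left with right local finiteness, is exactly that dualization. The same holds for your dual of Lemma \ref{e} and your final splitting of $\delta\in\EE^{\mathfrak m}(W,W')$. One correction: in the last step the operation is a push-out of $\delta$ along $W'\to W_0$, not a pull-back, though the diagram you have in mind is the right one.

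The gap is the ``if'' half. The printed proof of Theorem \ref{thm3} treats only ``only if'', so your duality reduction supplies nothing for this direction. Your one-line sketch is not an argument. Lemma \ref{L3}, its dual and Theorem \ref{ck} hold for every hereditary-like $\W$, and for such $\W$ the paper only obtains quasi-abelianness (Theorem \ref{main}). So these results alone cannot produce abelianness. The rigidity $\EE^{\mathfrak m}(\W,\W)=0$ must be used at some definite point, and your sketch never says where.

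Here is one way to close it, with no local finiteness needed. Put $\mathbb F=\EE^{\mathfrak m}$ and let $\underline f\colon A\to B$ be monic and epic in $\E/\W$.

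First, by Corollary \ref{L2c} you may assume there is an $\EE$-triangle $A\xrightarrow{f}B\xrightarrow{g}W\overset{\delta}{\dashrightarrow}$ with $f$ $\W$-monic and $W\in\W$. Since $f$ is $\W$-monic, $\delta\in\mathbb F(W,A)$.

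Next, take an $\mathbb F$-triangle $W_2\to W_1\xrightarrow{t}B\dashrightarrow$ with $t$ a right $\W$-approximation (Proposition \ref{here}), and pull it back along $f$. The dual of Lemma \ref{L4} gives an $\mathbb F$-triangle $P\to A\oplus W_1\xrightarrow{\svech{f}{-t}}B\overset{\kappa}{\dashrightarrow}$. The dual of Lemma \ref{L2}, which is where monicity of $\underline f$ enters, gives $P\in\W$.

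Now $f^*\kappa=0$, because $f=\svech{f}{-t}\svecv{1}{0}$. On the other hand, the $\mathbb F$-triangle $\delta$ gives an exact sequence $\mathbb F(W,P)\xrightarrow{g^*}\mathbb F(B,P)\xrightarrow{f^*}\mathbb F(A,P)$, whose first term vanishes by rigidity. Hence $\kappa=0$, so $\svech{f}{-t}$ is a retraction, i.e.\ $f\alpha-t\beta=1_B$ for some $\alpha,\beta$.

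Thus $\underline f$ is a monic split epimorphism, hence an isomorphism. Since $\E/\W$ is semi-abelian, it is abelian.

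Equivalently, the two defining conditions say precisely that $(\W,\W)$ is a cotorsion pair in $(\E,\mathbb F,\mathfrak s|_{\mathbb F})$ whose heart is $\E/\W$. You could instead invoke the abelianness of hearts of cotorsion pairs.
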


\section{Some examples}

In this section, we provide several examples to illustrate our main results.

\begin{exm} \label{examp:infinite-repr type} \rm
In this example, we provide an representation-infinite $\kk$-algebra over an algebraically closed field $\kk$ for Theorem \ref{thm3}.
Take $A=\kk\Q/\I$ given by the bound quiver
\begin{center}
\begin{tikzpicture}[scale=1.45]
\draw (-1,0) node{1} (1,0) node{2} (0,1.3) node{3};
\draw[line width=1pt][->] (-0.8,-0.2) to[out=-30, in=210] (0.8,-0.2);
\draw[line width=1pt][->] (-0.8,-0.0) --(0.8,-0.0);
\draw[line width=1pt][->] (1,0.2) -- (0.1,1.1);
\draw[line width=1pt][->] (-0.1,1.1) -- (-1,0.2);
\draw (0,0) node[above]{$a'$} (0,-0.45) node[below]{$a$}
      (0.6,0.6) node[above right]{$b$}
      (-0.6,0.6) node[above left]{$c$};
\draw[blue][dashed] ( 0.5,-0.42) arc(-135:130:0.7);
\draw[blue][dashed] (-0.5,-0.42) arc(-45:-310:0.7);
\draw[blue][dashed] (-0.45,0.75) -- (0.45,0.75);
\end{tikzpicture}

$\I = \langle ab, bc, ca\rangle$.
\end{center}
In this example, $A$ is a gentle algebra (cf. \cite{AS1987} or \cite[etc]{AG2008}).
Thus, all indecomposable objects in $\modcat A$ are either string modules or band modules by using \cite{WW1985,BR1987}
(the author of other references, see for example \cite{AG2008,OPS2018,APS2019},
also provided definitions for string modules and band modules, which readers can refer to on their own).
Furthermore, we can divide all indecomposable right $A$-modules to three cases:
\begin{itemize}
  \item[(1)] simple module $S(3)$;
  \item[(2)] string modules whose string does note cross the vertex $3$;
  \item[(3)] band modules $B(n,\lambda)=$
\begin{tikzpicture}[baseline=-0.5] \tiny
\draw (-1,0) node{$\kk^{\oplus n}$} (1,0) node{$\kk^{\oplus n}$} (0,1.3) node{0};
\draw [->] (-0.8,-0.2) to[out=-30, in=210] (0.8,-0.2);
\draw [->] (-0.8,-0.0) --(0.8,-0.0);
\draw [->] (1,0.2) -- (0.1,1.1);
\draw [->] (-0.1,1.1) -- (-1,0.2);
\draw (0,0) node[above]{$1$} (0,-0.45) node[below]{$\bfJ_{n}(\lambda\ne 0)$}
      (0.6,0.6) node[above right]{$0$}
      (-0.6,0.6) node[above left]{$0$};
\end{tikzpicture},
where $\bfJ_n(\lambda)$ is the $n\times n$ Jordan block with eigenvalue $\lambda$.
\end{itemize}
Then for the category $\modcat A$ and its full subcategory $\catW := \langle M \in \obj(\modcat A) \mid Me_1 \ne 0 \text{~or~} Me_3 \ne 0\rangle$
($e_1$ is the idempotent corresponded by the vertex $1$), we have
\[\obj(\modcat A)\backslash\obj(\catW) = \{S(3)_A\}. \]
Moreover, since the projective resolution $\mathbf{P}(S(3))$ of $S(3)_A$ is
\[ \cdots \To{}
   P(1)
     = \left(
       \begin{smallmatrix}
        & 1 & \\
        2 && 2 \\
        && 3
       \end{smallmatrix}
       \right)_A
  \To{h}
  P(3)
    = \left(
       \begin{smallmatrix}
        3 \\ 1 \\ 2\\ 3
       \end{smallmatrix}
       \right)_A
  \To{}
  S(3) = (3)_A
  \To{}
  0,
 \]
we have the following exact sequence
\[ 0 \oT{}
  (1)_{A^{\op}}
  \oT{}
  P(1)_{A^{\op}}
  \oT{h^*}_{(=\Hom_A(h,A))}
  P(3)_{A^{\op}}
    = \left(
       \begin{smallmatrix}
        3 \\ 2 \\ 1\\ 3
       \end{smallmatrix}
       \right)_{A^{\op}}
  \oT{}
  \Hom_A((3)_A, A) \oT{} 0 \]
which is obtained by $\Hom_A(\mathbf{P}(S(3)), A)$. Here, the bound quiver of $A^{\op}$ is obtained by rotating the direction of all arrows of $\Q$.
It follows that $\tau S(3)_A = D((1)_{A^{\op}}) \cong (1)_A = S(1)_A \in \catW$.
One can compute that $\tau^{-1}S(3)_A \cong S(2)_A$ by a dual way.
Furthermore, if a homomorphism $S(3) \to S(3)$ in $\modcat A~/\catW$ is not irreducible,
then it factors through some object in $\obj(\catW)$. Thus, we have
\[ \modcat A~/\catW = \langle S(3)_A \rangle \cong \modcat \kk \]
is Abelian.
The simple module $S(3)_A$ has a $\catW$-resolution
\begin{align}\label{ses1}
  0 \To{} W_1=(1)_A \To{} W_0 = (^3_1)_A \To{w_0} S(3)_A \To{} 0
\end{align}
and a $\catW$-coresolution
\begin{align}\label{ses2}
  0 \To{} (3)_A \To{w^0} W^0 = (^2_3)_A \To{} W^1=S(2)_A \To{} 0.
\end{align}
For arbitrary indecomposable right $A$-module $M_A \in \obj(\catW)$ with $\Hom_A(M, S(3)_A) \ne 0$,
we have any $f: M \to S(3)$ in $\Hom_A(M, S(3)_A)$ is epimorphic,
and $Me_3 \ne 0$. It follows that
\[ M \in
 \left\{
 \left(
   \begin{smallmatrix}
    3 \\ 1
   \end{smallmatrix}
 \right)_A,
  \left(
   \begin{smallmatrix}
    3 \\ 1 \\ 2
   \end{smallmatrix}
 \right)_A,
  \left(
   \begin{smallmatrix}
    3 \\ 1 \\ 2\\ 3
   \end{smallmatrix}
 \right)_A,
  \left(
   \begin{smallmatrix}
    3 &&& \\
    1 &&1 && \cdots\\
    & 2 &&2 & \cdots
   \end{smallmatrix}
 \right)_A
 \right\} \]
up to isomorphism. Thus $f$ must have a decomposition
\[ f=f_1f_2: \xymatrix{ M \ar@{->>}[r]^{f_2} & ({^3_1})_A \ar@{->>}[r]^{f_1} & (3)_A }. \]
This composition admits that $w_0$ is a right $\catW$-approximation since $\dim_{\kk}\Hom_A(({^3_1})_A, (3)_A)=1$.
Dually, $w^0$ is also a left $\catW$-approximation. Thus, $\catW$ is hereditary.

For any $N$ with $\Hom_{\kk}(W_1,N) \ne 0$, since $W_1 = S(1)_A$ is simple, we have $N$ has a submodule which is isomorphic to $S(1)_A$, and then $N = (_1^3)_A$ up to isomorphism.
Then the push-out of
\begin{tikzpicture}[baseline=0.5]
\draw (0,0) node{$\xymatrix{ (1)_A \ar[r] \ar[d] & (^3_1)_A \\ (^3_1)_A & }$};
\end{tikzpicture},
i.e., the following commutative diagram
\[ \xymatrix@C=1.5cm{
 0
  \ar[r]
& W_1
  \ar[r]
  \ar[d]
& W_0
  \ar[r]
  \ar[d]
& (3)_A
  \ar[r]
  \ar@{=}[d]
& 0
\\
 0
  \ar[r]
& (^3_1)_A
  \ar[r]^{[^1_0]}
& (^3_1)_A \oplus (3)_A
  \ar[r]_{[0\ 1]}
& (3)_A
  \ar[r]
& 0,
} \]
given by (\ref{ses1}) provides only one trivial extension
$0 \To{} (^3_1)_A \To{[^1_0]} (^3_1)_A \oplus (3)_A \To{[0\  1]} (3)_A \To{} 0$.
We can consider (\ref{ses2}) by similar way, and so we have $\bbF = \widehat{\{(\ref{ses1}), (\ref{ses2})\}}$.
In this case, $(\catW,\FF,s|_{\FF})$ is a cluster tilting subcategory of $(\catE:=\modcat A,\FF,s|_{\FF})$.
\end{exm}

\begin{exm} \rm
Let $A$ be the path algebra given by the infinite quiver
\[ \Q= \xymatrix{ \cdots \ar[r]^{a_3} & 3^+ \ar[r]^{a_2} & 2^+ \ar[r]^{a_1} & 1 &
 2^- \ar[l]_{b_1} & 3^- \ar[l]_{b_2} & \ar[l]_{b_3} \cdots }, \]
then it is an finite-dimensional path algebra. Furthermore, it is a locally gentle algebra.
Any finitely generated indecomposable right $A$-module in $\modcat A$ is a string module corresponding to a string on $\Q$,
then all non-zero indecomposable modules in $\modcat A$ is divided to three classes as follows:
\begin{itemize}
  \item[(1)] a string module corresponding to a path of the form $a_n \cdots a_{u+1}a_u$ ($n\>= u$)
    or a simple module corresponding to a vertex in $\{2^+,3^+,\cdots\}$,
    and, if $u=1$, then we write it as $[n^+,1,\times]$;
  \item[(2)] a string module corresponding to a path of the form $b_m\cdots b_{v+1}b_v$ ($m\>= v$)
    or a simple module corresponding to a vertex in $\{2^-,3^-,\cdots\}$,
    and, if $v=1$, then we write it as $[\times,1,m^-]$;
  \item[(3)] a string module corresponding to a string of the form $a_n^{-1}\cdots a_1^{-1}b_1\cdots b_m$
  ($= b_m^{-1}\cdots b_1^{-1}a_1\cdots a_n$, $m,n\>=1$), we write it as $[(n+1)^+,1,(m+1)^-]$.
\end{itemize}
Thus, we naturally denote $S(1)$ $[\times,1,\times]$ in this example.
\begin{figure}[htbp]
  \centering
  \includegraphics[width=16cm]{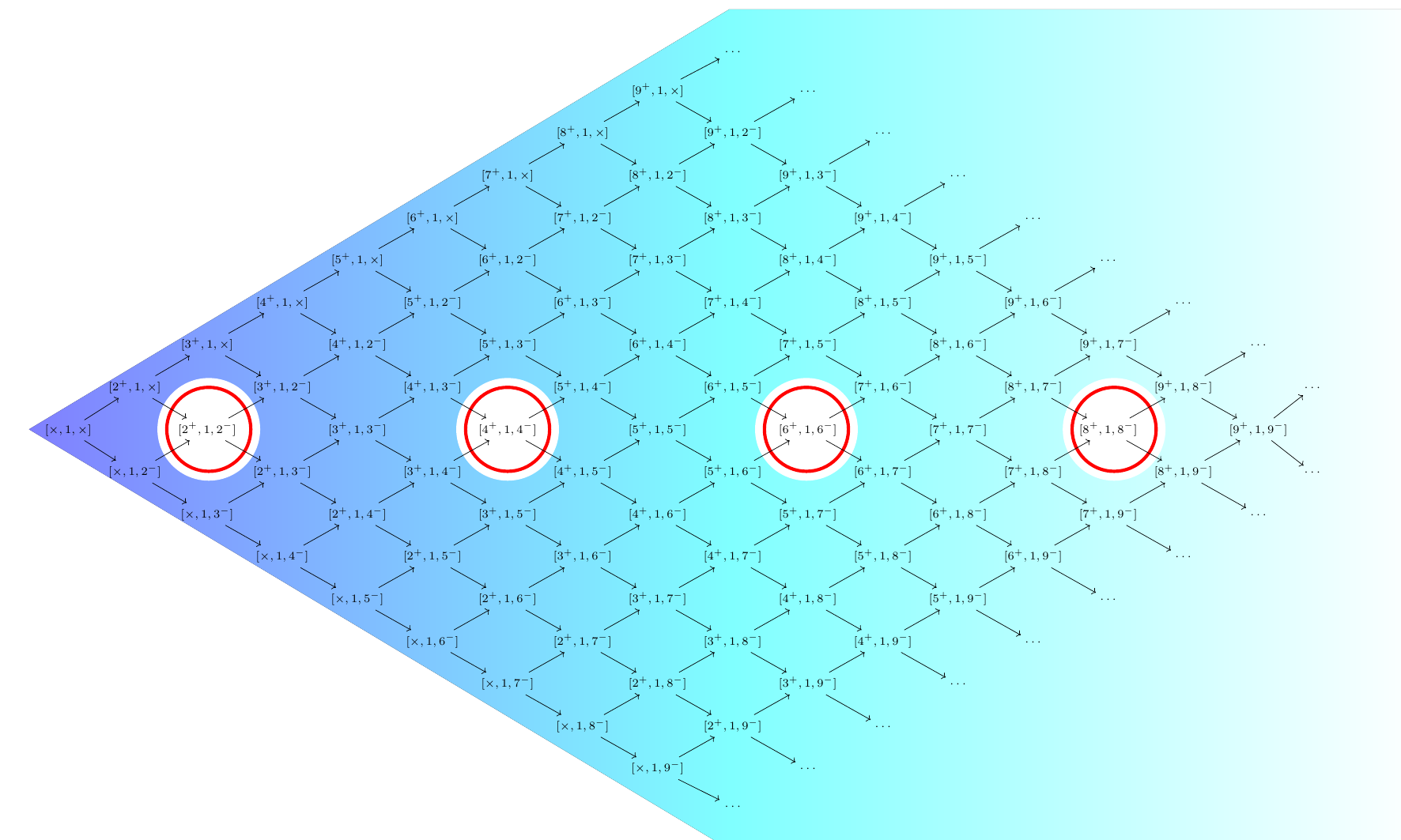}\\
  \caption{A connected component of Auslander--Reiten quiver of $A=\kk\Q$}
  \label{fig:exp20250827}
\end{figure}
Figure \ref{fig:exp20250827} shows a connected component of Auslander--Reiten quiver of $A$.

Take $\catE = \modcat A$ and $\catW = \langle W\in\ind(\modcat A) \mid W\not\cong [(2t)^+,1,(2t)^-] ~ (t\in\NN^+) \rangle$
(see the shadow in \checkLYZ{Figure \ref{fig:exp20250827}},
then for any indecomposable module $X=[(2t)^+,1,(2t)^-]$ (see the vertex marked by ``~{\color{red}$\pmb{\bigcirc}$}'' in \checkLYZ{Figure \ref{fig:exp20250827}}, it has a $\catW$-resolution
\begin{align} \label{ses:1-0829}
  0 \To{} [(2t-1)^+,1,(2t-1)^-] \To{}
{\begin{smallmatrix}
[(2t)^+,1,(2t-1)^-] \hspace{.85cm} \\ \oplus \\ \hspace{.85cm} [(2t-1)^+,1,(2t)^-] \\
\end{smallmatrix}}
 \To{f} X
 \To{} 0 {\color{white}.}
\end{align}
and a $\catW$-coresolution
\begin{align} \label{ses:2-0829}
   0 \To{} X \To{g}
 {\begin{smallmatrix}
[(2t)^+,1,(2t+1)^-] \hspace{.85cm} \\ \oplus \\ \hspace{.85cm} [(2t+1)^+,1,(2t)^-] \\
\end{smallmatrix}}
 \To{} [(2t+1)^+,1,(2t+1)^-]
 \To{} 0.
\end{align}
Then $\FF_0$ is generated by all extensions as above.
We use $\wp_{v_1,v_2}$ to represent the path from $v_1$ to $v_2$ in the quiver $\Q$.
For any indecomposable module $M$ with $\Hom_A(M,[(2t)^+,1,(2t)^-])\ne 0$,
we have $M\cong [x^+,1,y^-]$, where $x,y \in \NN_{\leqslant 2t}$, $x+y<4t$, and if $x=0$ (resp., $y=0$), we take $0^+$ (resp., $0^-$) is $\times$.
In this case, any homomorphism $h: M \to [(2t)^+,1,(2t)^-]$ in $\Hom_A(M,[(2t)^+,1,(2t)^-])$ is induced by the pair $(\wp_{(2t)^+,x^+}, \wp_{(2t)^-,y^-})$ of paths $\wp_{(2t)^+,x^+} = a_{2t-1}\cdots a_{x+1}a_x$ and $\wp_{(2t)^-,y^-} = b_{2t-1}\cdots b_{y+1}b_y$, naturally.
Thus, we have \[h = k_1\wp_{(2t)^+,x^+} + k_2\wp_{(2t)^-,y^-} ~ (k_1,k_2\in\kk)\]
up to isomorphism.
On the other hand, we have $f = [ a_{2t} \ b_{2t} ]$ up to isomorphism. Thus, for the case of $x,y<2t$,
we have the homomorphism
\[ \varphi = \left[\begin{matrix}
  k_1\wp_{(2t-1)^+,x^+} \\ k_2\wp_{(2t-1)^-,y^-}
\end{matrix}\right] : M\cong [x^+,1,y^-] \To{} [(2t)^+,1,(2t-1)^-] \oplus [(2t-1)^+,1,(2t)^-] \]
satisfying
\begin{align*}
  f  \varphi
& = [ b_{2t} \ a_{2t} ]
 \left[\begin{matrix}
  k_1\wp_{(2t-1)^+,x^+} \\ k_2\wp_{(2t-1)^-,y^-}
\end{matrix}\right]
\\
& = k_1a_{2t}\wp_{(2t-1)^+,x^+} + k_2b_{2t}\wp_{(2t-1)^-,y^-} \\
& = k_1\wp_{(2t)^+,x^+} + k_2\wp_{(2t)^-,y^-} = h.
\end{align*}
It follows that the homomorphism $f$ given in (\ref{ses:1-0829}) is a left $\catW$-approximation.
We can prove that $g$ given in (\ref{ses:2-0829}) is a right $\catW$-approximation by using a dual way, then $\catW$ is hereditary.

On the other hand, for any indecomposable module $Y$ with $\Hom_A([(2t-1)^+,1,(2t-1)^-], Y) \ne 0$,
$Y$ is isomorphic to either $M(\wp_{(2t-1)^+, u^+})$ ($u \geqslant 2$) or $M(\wp_{(2t-1)^-, v^-})$ ($v \geqslant 2$).
Here, for any path $\wp$ in $\Q$, $M(\wp)$ is the string module corresponded by $\wp$.
Consider the case for $Y\cong M(\wp_{(2t-1)^+, u^+})$, we have the following push-out
\[ \xymatrix@C=2cm{
 0
  \ar[r]
& \begin{smallmatrix} [(2t-1)^+,1,(2t-1)^-] \end{smallmatrix}
  \ar[r]^{
  \left[\begin{smallmatrix}
   - \emb_1 \\ \emb_2
  \end{smallmatrix}\right]
  }
  \ar[d]_{\text{a homomorphism} \atop \text{induced by }\wp_{u^+,1}}
& {\begin{smallmatrix}
[(2t)^+,1,(2t-1)^-] \hspace{.5cm} \\ \oplus \\ \hspace{.5cm} [(2t-1)^+,1,(2t)^-] \\
\end{smallmatrix}}
  \ar[r]^{\hspace{1cm}f = [ a_{2t} \ b_{2t} ]}
  \ar[d]_{
  \left[\begin{smallmatrix}
   -\epi_1 & \epi_2 \\
   a_{2t} & b_{2t}
  \end{smallmatrix}\right]
  }
& X
  \ar[r]
  \ar@{=}[d]
& 0
\\
 0
  \ar[r]
& M(\wp_{(2t-1)^+, u^+})
  \ar[r]_{[^1_0]}
& {\begin{smallmatrix} M(\wp_{(2t-1)^+, u^+})\oplus X \end{smallmatrix}}
  \ar[r]_{[0\ 1]}
& X
  \ar[r]
& 0
} \]
\begin{center}
  (where $\emb_1$ and $\emb_2$ are canonical embedding,

  $\epi_1$ and $\epi_2$ are canonical epimorphism, and

  $[ a_{2t} \ b_{2t} ] \left[\begin{smallmatrix}
   -\emb_1 \\ \emb_2
  \end{smallmatrix}\right] = -a_{2t}\emb_1 + b_{2t}\emb_2 = 0$)
\end{center}
for (\ref{ses:1-0829}), i.e., the second row is always split.
One can consider the case for $Y\cong M(\wp_{(2t-1)^-, v^-})$ by a dual way, and any pullback given by (\ref{ses:2-0829}) is similar.
Therefore, we have $\FF_1=\FF_0$, and it follows that $(\catW,\FF,s|_{\FF})$ is a cluster tilting subcategory of $(\catE:=\modcat A,\FF,\mathfrak{s}|_{\FF})$.
\end{exm}

\vspace{2mm}

\hspace{-4mm}\textbf{Data Availability}\hspace{2mm} Data sharing not applicable to this article as no datasets were generated or analysed during
the current study.
\vspace{2mm}

\hspace{-4mm}\textbf{Conflict of Interests}\hspace{2mm} The authors declare that they have no conflicts of interest to this work.

\end{document}